\newcommand{\comment}[1]{}
\renewcommand{\epsilon}{\varepsilon}
\renewcommand{\Re}{\mathrm{Re}\,}
\renewcommand{\Im}{\mathrm{Im}\,}
\theoremstyle{definition}
\newtheorem{defn}{Definition}[section]
\theoremstyle{remark}
\newtheorem{exa}[defn]{Example}
\theoremstyle{plain}
\newtheorem{prop}[defn]{Proposition}
\newtheorem{cor}[defn]{Corollary}
\newtheorem{thm}[defn]{Theorem}
\newtheorem{lem}[defn]{Lemma}
\title{Smooth and Gevrey Microlocal Hypoellipticity for a Class of Hypocomplex Tube Structures}
\author{Nicholas Braun Rodrigues}
\begin{document}
\maketitle

\begin{abstract}
We prove a smooth and Gevrey-$s$ microlocal hypoellipticity result for a system of complex vector fields associated with a real-analytic locally integrable structure of tube type, that is also microlocal hypocomplex. In order to so, we employ the use of a certain partial F.B.I. transform adapted to the locally integrable structure, first introduced by M. S. Baouendi, C. H. Chang and F. Treves, and we prove a microlocal characterization of the smooth and Gevrey-$s$ wave front set in terms of the decay of this partial F.B.I. transform. \end{abstract}

\section{Introduction}

Let $\mathcal{V} \subset \mathbb{C}\mathrm{T} \Omega$ be a real-analytic locally integrable structure of corank $m$ on a $\mathcal{C}^\omega$-smooth manifold $\Omega$ of dimension $n+m$, \textit{i.e.} it's orthogonal bundle $\mathrm{T}^\prime \subset \mathbb{C}\mathrm{T}^\ast \Omega$, defined by the one forms that annihilate the sections of $\mathcal{V}$, is locally generated by the differentials $m$ of real-analytic functions. Locally, the bundle $\mathrm{T}^\prime$ is generated a small open neighborhood of the origin $U$ by the differential of maps of the form
\begin{equation*}
	Z_k(x,t) = x_k + i \phi_k(x,t),
	\eqno{k = 1, \dots, m.}
\end{equation*} 
where the map $\phi: U \longrightarrow \mathbb{R}^m$ is real-analytic and satisfies $\phi(0) = 0$ and $\mathrm{d}_x \phi(0) = 0$. Assuming $U$ small enough, the sections of $\mathcal{V}$ on $U$ are spanned by the following family of complex vector fields,
\begin{equation*}
	\mathrm{L}_j = \frac{\partial}{\partial t_j} - i \sum_{k,\ell = 1}^m \frac{\partial \phi_k}{\partial t_j} \big(Z_x^{-1}\big)_{\ell,k}\frac{\partial}{\partial x_\ell},
	\eqno{j = 1, \dots, n.}
\end{equation*}
A long standing question in the theory of locally integrable structures, also in CR geometry ($\mathcal{V}$ is said to be CR if $\mathcal{V} \cap \overline{V} = \{0\}$), is to characterize when the system of complex vector fields $\mathrm{L}_1, \dots, \mathrm{L}_n$ is (microlocal) hypoelliptic (analytic hypoelliptic, Gevrey-$s$ hypoelliptic) at the origin. We recall that $\mathcal{V}$ is said to be hypoelliptic at the origin, if every distributional solution $u \in \mathcal{D}^\prime(U)$ of the system
\begin{equation*}
	\mathrm{L}_j u  = f_j,
	\eqno{j = 1, \dots n,}
\end{equation*} 
for $f_j \in \mathcal{C}^\infty(U)$, $j = 1, \dots, n$, is actually $\mathcal{C}^\infty$-smooth in some open neighborhood of the origin. Here we point out that in the CR case (embedded), the analytic hypoellipticity of $\mathcal{V}$ is equivalent to holomorphic extension of CR functions (or CR distributions). 

In 1981, F. Treves conjectured that, when $m = 1$, the hypoellipticity of $\mathcal{V}$ should be equivalent to the openness of the map $Z$ near the origin. It is a well-known fact that the openness of $Z$ near the origin is equivalent to the analytic hypoelipticity of $\mathcal{V}$ (it is actually equivalent to the hypocomplexity of $\mathcal{V}$, see \cite{trevesbook}). In 1980, H. M. Maire proved in \cite{maire1980} that this is true for tube structures, \textit{i.e.} when you can choose the map $\phi$ to be independent of the variable $x$. In \cite{castellanos2013},  J. E. Castellanos, P. D. Cordaro and G. Petronilho proved, for tube structures, that the openness of $Z$ also implies the Gevrey-$s$ hypoellipticity of $\mathcal{V}$ at the origin. The conjecture is still open, but recently, N. Braun Rodrigues, P. D. Cordaro and G. Petronilho proved in \cite{braun2023}, that under the additional assumption that $|\mathrm{d} \Im Z| \leq C | \mathrm{d} Z \wedge \mathrm{d}\bar{Z}|$ (which implies a certain Lojasiewicz inequality), the openness of $Z$ implies the hypoellipticity of $\mathcal{V}$ at the origin. 

For $m > 1$, the picture changes drastically for hypoellipticity, but not for analytic hypoellipticity. In 1982, M. S. Baouendi and F. Treves proved in \cite{baouendi1982} that for tube structures, if $\xi_0 \in \mathbb{R}^m \setminus 0$, then the following are equivalent:
\begin{itemize}
	\item[\textit{1.}] the origin is not a local minimum of the function $t \mapsto \phi(t) \cdot \xi_0$;
	\item[\textit{2.}] $(0,\xi_0) \notin \text{WF}_a(u|_{t=0})$, for every $u \in \mathcal{D}^\prime$ such that $\mathrm{L}_j u = 0$, for $j = 1, \dots, n$. 
\end{itemize}
Condition \textit{2.} is called microlocal hypocomplexity. The structure is said hypocomplex at the origin if every distributional solution $u$ of the homogeneous system $\mathrm{L}_j u = 0$, $j = 1, \dots, n$, is the composition of a holomorphic function $F$ with the first integrals $Z$, \textit{i,e,} $u = F \circ Z$ in an open neighborhood of the origin, or in other words, every distributional solution is hypo-analytic at the origin. Furthermore, a distributional solution $u$ is hypo-analytic at the origin if and only if its trace $u|_{t=0}$ is the composition of a holomorphic function with $Z|_{t=0}$. It is worth noticing that $\mathcal{V}$ is analytic hypoelliptic at the origin if and only if every solution (of the homogeneous system) is real-analytic near the origin. Both properties are in general false for smooth and Gevrey-$s$ regularity. 

For $\mathcal{C}^\infty$ regularity, even for tube structures, it is a whole different story. Consider in $\mathbb{R}^4$ the following complex vector fields introduced by H. M. Maire in \cite{maire1980}:
\begin{align*}
	&\mathrm{L}_1 = \partial_{t_1} + i\left( 3 \partial_{x_1} - (4 t_1 t_2  + 3)t_1^2 \partial_{x_2} \right),\\
	&\mathrm{L}_2 = \partial_{t_2} -i t_1^4 \partial_{x_2}.
\end{align*}
In 1981, M. S. Baouendi and F. Treves proved in \cite{baouendi1981} that the system $\{ \mathrm{L}_1, \mathrm{L}_2 \}$ is analytic hypoelliptic but not hypoelliptic. Then, in 2013, J. E. Castellanos, P. D. Cordaro and G. Petronilho proved in \cite{castellanos2013} that the system $\{ \mathrm{L}_1, \mathrm{L}_2 \}$ is not Gevrey-$s$ hypoelliptic for every $s \geq 4$.

Now let us describe the results contained in the present paper. Suppose $\mathcal{V}$  is of tube type, \textit{i.e.} $\phi(x,t) = \phi(t)$, and let $\xi_0 \in \mathbb{S}^{m-1}$. In section \ref{sec:hypoelliptic} we introduce the property $(\star)$ at $\xi_0$ (see definition \ref{def:star}),  which, in a nutshell, means that the origin is not a local minimum of $t \mapsto \phi(t)\cdot \xi_0$, and there exist constants $C>0$, $1/2 \leq \theta < 1$, such that $|\phi(t) \cdot \xi|^\theta \leq C|{}^t\mathrm{d}\phi(t)\xi|$, for $t$ in an open neighborhood of the origin and $\xi$ in an open neighborhood of $\xi_0$. Let $V \subset \mathbb{R}^n$ and $W \subset \mathbb{R}^m$ be two sufficiently small open neighborhoods of the origin, as described in section \ref{sec:preliminaries}. Under this condition we were able to prove the following Theorem: 
\begin{thm}\label{thm:microlocal_hypoelliptic_tube}
Let $\xi_0 \in \mathbb{S}^{m-1}$ be such that the map $\phi$ satisfies condition $(\star)$ at $\xi_0$. Let $s > 1$, and let $u \in \mathcal{C}^1 (W; \mathcal{D}^\prime(V))$ be such that $(0,0,\xi_0,0) \notin \mathrm{WF}_s(\mathrm{L}_j u)$, for $j = 1, \dots, n$. Then $(0,0,\xi_0,0) \notin \mathrm{WF}_s(u)$. Furthermore, if we assume $\theta = 1/2$ on \eqref{eq:lojasiewicz}, then for every $u \in \mathcal{C}^1 (W; \mathcal{D}^\prime(V))$ such that $(0,0,\xi_0,0) \notin \mathrm{WF}(\mathrm{L}_j u)$, for $j = 1, \dots, n$, we have $(0,0,\xi_0,0) \notin \mathrm{WF}(u)$.
\end{thm}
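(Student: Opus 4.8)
The plan is to reduce the microlocal regularity statement to a decay estimate for the partial F.B.I. transform adapted to the tube structure, then propagate that decay from $\mathrm{L}_j u$ to $u$ using the Łojasiewicz-type inequality in condition $(\star)$. Concretely, I would first invoke the microlocal characterization of $\mathrm{WF}_s$ (and $\mathrm{WF}$) in terms of the partial F.B.I. transform $\mathcal{F}u(x,t,\xi,\tau)$ of Baouendi--Chang--Treves that is established earlier in the paper: $(0,0,\xi_0,0)\notin\mathrm{WF}_s(u)$ is equivalent to an exponential decay $|\mathcal{F}u| \lesssim e^{-c|\xi|^{1/s}}$ (respectively, rapid polynomial decay in the $\mathcal{C}^\infty$ case) for $(x,t,\xi,\tau)$ in a conic neighborhood of $(0,0,\xi_0,0)$. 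Since $u\in\mathcal{C}^1(W;\mathcal{D}'(V))$, I can work with $\mathcal{F}u$ as a smooth function of $t$ valued in a space of holomorphic functions of the complexified base variable, which lets me differentiate under the integral sign.

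The second step is the key computation: apply $\mathrm{L}_j$ under the F.B.I. transform. Because the structure is of tube type ($\phi=\phi(t)$), the vector fields $\mathrm{L}_j$ have constant coefficients in $x$, so $\mathcal{F}(\mathrm{L}_j u)$ differs from $(\partial_{t_j} + \text{(multiplier)})\mathcal{F}u$ by a controllable error; more precisely one gets a first-order ODE system in $t$ of the form $\partial_{t_j}\mathcal{F}u = \mathcal{F}(\mathrm{L}_j u) + (\text{amplitude involving } \partial_{t_j}\phi(t)\cdot\xi)\,\mathcal{F}u + R_j$, where the phase of the F.B.I. kernel contributes the term $\phi(t)\cdot\xi$ in the exponent. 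Integrating this system along $t$ from a point where $t\mapsto\phi(t)\cdot\xi_0$ is not at a local minimum — which is exactly the first half of $(\star)$, the non-minimality hypothesis — one produces a gain in the Gaussian weight $e^{-|\phi(t)\cdot\xi|}$ of the kernel. Here is where the Łojasiewicz inequality $|\phi(t)\cdot\xi|^\theta \le C|{}^t\mathrm{d}\phi(t)\xi|$ enters: it guarantees that moving away from the critical set of $\phi(\cdot)\cdot\xi$ in the direction of $\nabla(\phi(t)\cdot\xi)$ increases $\phi(t)\cdot\xi$ at a rate that, after integrating the ODE, converts the a priori decay of $\mathcal{F}(\mathrm{L}_j u)$ into the desired decay of $\mathcal{F}u$, with the exponent $1/s$ (or rapid decay when $\theta=1/2$) surviving because $\theta<1$ (resp. $\theta=1/2$ is the borderline that still allows $\mathcal{C}^\infty$).

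I would organize the propagation as a Gronwall/integration argument along a carefully chosen path: pick a direction $t\mapsto\gamma(t)$ (a segment or short curve) emanating from the origin along which $\phi(\gamma(t))\cdot\xi \ge c|t|^{1/(1-\theta)}$ thanks to $(\star)$ — this is the standard way the Łojasiewicz exponent $\theta$ controls the rate of descent from a non-minimum — integrate the ODE for $\mathcal{F}u$ along $\gamma$, bound the contribution of $\mathcal{F}(\mathrm{L}_j u)$ by its hypothesized decay times the (integrable) weight, and bound the homogeneous part by the Gaussian factor $e^{-|\phi(\gamma(t))\cdot\xi|}$. Uniformity in a full conic neighborhood of $(0,0,\xi_0,0)$ (including the $\tau$ variable near $0$) follows because $(\star)$ is assumed on an open neighborhood of $\xi_0$, and the $\tau$-dependence of the kernel is a lower-order perturbation absorbed into constants. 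Finally, re-assembling via the inverse characterization gives $(0,0,\xi_0,0)\notin\mathrm{WF}_s(u)$, and the $\theta=1/2$ case gives $(0,0,\xi_0,0)\notin\mathrm{WF}(u)$ by the same estimate run with polynomial instead of exponential weights.

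The main obstacle I anticipate is making the ODE-along-a-path argument uniform and quantitatively sharp: one must choose the integration path so that the weight gained from $e^{-|\phi(t)\cdot\xi|}$ exactly matches (or beats) the loss incurred in differentiating the F.B.I. kernel and in the remainder terms $R_j$ coming from the $x$-holomorphic truncation, all while keeping the estimates uniform over the conic neighborhood and over the $t$-parameter range permitted by $W$. This balancing is precisely where the precise value of the Łojasiewicz exponent $\theta$ is used — $\theta<1$ yields $\mathrm{WF}_s$ for $s$ related to $1/(1-\theta)$, and $\theta=1/2$ is just good enough to close the argument in the $\mathcal{C}^\infty$ category — so the delicate point is tracking constants through the Gronwall step rather than any conceptual difficulty.
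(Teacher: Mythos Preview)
Your overall architecture is the paper's: use the partial F.B.I.\ characterization of $\mathrm{WF}_s$, exploit the identity $\partial_{t_j}\mathfrak{F}[\chi u]=\mathfrak{F}[\chi\mathrm{L}_j u]+\mathfrak{F}[u\,\mathrm{L}_j\chi]$ (which is exact in the tube case---there is no multiplier or remainder $R_j$ as you wrote), integrate along gradient-flow curves of $t\mapsto\phi(t)\cdot\xi$, and use the \L ojasiewicz bound to get $\phi(t)\cdot\xi-\phi(\gamma(\tau))\cdot\xi\gtrsim\tau^{1/(1-\theta)}$. Also, the partial transform is only in $x$ (no $\tau$ dual to $t$); the point $(0,0,\xi_0,0)$ is handled via decay in $\xi$ uniform in $t$, not via a $\tau$-variable.

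The genuine gap is that you have not identified the correct competing term, and the standard F.B.I.\ does not close the estimate. When you evaluate $\mathfrak{F}[\chi u](t';z,\xi)$ with $z=Z(x,t)$ and $t'=\gamma(\tau)$, the quadratic phase $-\langle\zeta\rangle\langle z-Z(x',t')\rangle^2$ contains the real contribution $+|\xi|^\kappa|\phi(t)-\phi(t')|^2$, which is a \emph{loss} of size $\sim|\xi|^\kappa\tau^2$ (here $|\phi(t)-\phi(t')|\le C_\phi\tau$ since the curves have unit speed). The gain from the linear phase is $\xi\cdot(\phi(t)-\phi(t'))\gtrsim|\xi|\,\tau^{1/(1-\theta)}$. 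With the ordinary F.B.I.\ ($\kappa=1$) and $\theta>1/2$ one has $1/(1-\theta)>2$, so for small $\tau$ the loss $|\xi|\tau^2$ dominates the gain $|\xi|\tau^{1/(1-\theta)}$ and the argument collapses---no choice of path repairs this, because the exponent mismatch is dictated by $\theta$, not by the geometry of $\gamma$. The paper's remedy is to introduce the $\kappa$-F.B.I.\ $\mathfrak{F}_\kappa$ with Gaussian weight $|\xi|^\kappa$ for a carefully chosen $\kappa\in(1/s,\,(2\theta-1)/s+2-2\theta)$: then the balance function $g(\tau)=c|\xi|\tau^{1/(1-\theta)}-c_1|\xi|^\kappa\tau^2$ has its minimum of order $-|\xi|^{(2\theta+\kappa-2)/(2\theta-1)}$, and the choice of $\kappa$ forces this exponent to be strictly below $1/s$, so the loss is absorbed by the hypothesized decay $e^{-\epsilon|\xi|^{1/s}}$ of $\mathfrak{F}_\kappa[\chi\mathrm{L}_j u]$. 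For $\theta=1/2$ one gets $g(\tau)=\tau^2(c|\xi|-c_1|\xi|^\kappa)\ge 0$ for large $|\xi|$, so the loss is nonnegative and polynomial decay survives; but even here $\kappa<1$ is needed, since for $\kappa=1$ the sign of $c-c_1$ is not under control. Your proposal as written, based on the standard transform and a Gronwall step, would therefore fail for every $\theta\in[1/2,1)$ unless you incorporate this $\kappa$-modification and the resulting optimization in $\tau$.
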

At the end of the paper we discuss a class of examples given by homogeneous polynomials of a fix degree satisfying condition $(\star)$.

In order to prove Theorem \ref{thm:microlocal_hypoelliptic_tube}, we gave a partial characterization of the $\mathcal{C}^\infty$ and Gevrey-$s$ wave front sets in terms of the decay of a partial adapted F.B.I. transform (see section \ref{sec:FBI}):
\begin{thm}\label{thm:microlocal_Gevrey_FBI}
	Let $s > 1$, $u \in \mathcal{C}^1 (W; \mathcal{D}^\prime (V))$ and $\xi_0 \in \mathbb{R}^m \setminus 0$. Then the following are equivalent:
	\begin{enumerate}
		\item[\textit{1.}]  $(0,0,\xi_0,0) \notin \mathrm{WF}_s(u)$;\\
	 	\item[\textit{2.}] For every $\chi \in \mathcal{C}_c^\infty(V)$, with $0\leq \chi \leq 1$ and $\chi \equiv 1$ in some open neighborhood of the origin, there exist $\widetilde{V} \subset V$, $\widetilde{W} \subset W$, open balls centered at the origin, an open cone $\mathcal{C} \subset \mathbb{R}^m \setminus 0$ containing $\xi_0$, and constants $C,\epsilon > 0$ such that
    	\begin{equation}\label{eq:FBI-decay-gevrey}
		    |\mathfrak{F}[\chi u](t; Z(x,t), \zeta)| \leq C e^{-\epsilon |\zeta|^{\frac{1}{s}}}, \qquad \forall t \in \widetilde{W}, x \in \widetilde{V}, \zeta \in \mathbb{R}\mathrm{T}^\prime_{\Sigma_t}\big|_{Z(x,t)}( \mathcal{C}).
	\end{equation}
\end{enumerate}
\end{thm}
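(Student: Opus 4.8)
The plan is to prove the equivalence by going through the classical connection between the FBI transform $\mathfrak{F}$ adapted to the tube structure and the standard FBI transform on $\mathbb{R}^m$ acting on the trace $u|_{t=0}$, together with the fact (recalled in the introduction) that, for solutions and more generally for elements of $\mathcal{C}^1(W;\mathcal{D}'(V))$, microlocal regularity at $(0,0,\xi_0,0)$ in $\Omega$ can be detected on the slice $t=0$. First I would fix $\chi\in\mathcal{C}_c^\infty(V)$ as in the statement and write out $\mathfrak{F}[\chi u](t;Z(x,t),\zeta)$ explicitly as an oscillatory integral, using that in the tube case $Z(x,t)=x+i\phi(t)$, so that the phase is quadratic in the $x$-variable with the extra purely imaginary shift by $\phi(t)$. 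The point is that differentiating the integrand in $t$ produces, via the vector fields $\mathrm{L}_j$, only terms that are again FBI-type integrals of $\chi u$ and of $\mathrm{L}_j(\chi u)=\chi\,\mathrm{L}_j u+[\mathrm{L}_j,\chi]u$; since $[\mathrm{L}_j,\chi]u$ is supported away from the origin in $x$, its FBI transform has the requisite exponential decay on a suitable cone automatically. So $\partial_t$-derivatives of $\mathfrak{F}[\chi u]$ are controlled by the FBI transforms of $\mathrm{L}_j u$ plus harmless remainders, which is what lets one move between the full wave front set condition in $(x,t)$-space and a condition at $t=0$.

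Next, for the implication $(1)\Rightarrow(2)$: assuming $(0,0,\xi_0,0)\notin\mathrm{WF}_s(u)$, I would invoke the (standard, Gevrey) microlocal characterization to conclude that near the origin $u$ agrees, microlocally in the direction $\xi_0$, with a function whose standard FBI transform in $x$ decays like $e^{-\epsilon|\zeta|^{1/s}}$ on a cone around $\xi_0$, uniformly for $x$ in a small ball — and that this persists for $t$ in a small ball by the $\mathcal{C}^1$-in-$t$ hypothesis and the commutation identity above, since $\partial_t\mathfrak{F}[\chi u]$ is again of the controlled type. Then I would relate the adapted transform $\mathfrak{F}[\chi u](t;Z(x,t),\zeta)$ to the standard one by the change of contour in the $x$-integration that absorbs the shift $i\phi(t)$: this contributes a factor involving $e^{\phi(t)\cdot\zeta}$ (roughly), which must be controlled by restricting $\zeta$ to $\mathbb{R}\mathrm{T}'_{\Sigma_t}|_{Z(x,t)}(\mathcal{C})$ — precisely the set where the real part of the phase keeps the correct sign. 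Shrinking $\widetilde V,\widetilde W,\mathcal{C}$ as needed gives \eqref{eq:FBI-decay-gevrey}.

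For the converse $(2)\Rightarrow(1)$: starting from the decay of $\mathfrak{F}[\chi u]$ on $\mathbb{R}\mathrm{T}'_{\Sigma_t}|_{Z(x,t)}(\mathcal{C})$, I would run the inversion formula for the adapted FBI transform (the Baouendi–Chang–Treves transform comes with such a formula) to recover $\chi u$ near the origin, splitting the $\zeta$-integral into the region where the exponential decay holds and its complement; on the good region the integral converges and contributes a Gevrey-$s$ (indeed better) function, while the complementary region is handled by a contour deformation in the $x$-variable exploiting that there $\Re$ of the phase is strictly negative, again yielding Gevrey-$s$ bounds. Evaluating at $t=0$ and translating back through the trace characterization mentioned in the introduction gives $(0,0,\xi_0,0)\notin\mathrm{WF}_s(u)$. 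The smooth case ($\mathrm{WF}$ in place of $\mathrm{WF}_s$) is identical with $e^{-\epsilon|\zeta|^{1/s}}$ replaced by rapid polynomial decay $\langle\zeta\rangle^{-N}$ for all $N$.

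The main obstacle I expect is the bookkeeping of the contour deformation that relates $\mathfrak{F}[\chi u](t;Z(x,t),\zeta)$ to a genuine FBI transform in the $x$-variable while keeping track of exactly which covectors $\zeta\in\mathbb{R}\mathrm{T}'_{\Sigma_t}$ land in the region of exponential decay: one has to show that the ``good cone'' $\mathcal{C}$ around $\xi_0$ in the base is carried, under the identification $\zeta\mapsto\mathbb{R}\mathrm{T}'_{\Sigma_t}|_{Z(x,t)}$, to a set on which $\Re$(phase)$<0$ uniformly, and that this is stable under the small perturbations coming from letting $t$ vary and from the error terms $[\mathrm{L}_j,\chi]u$. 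Making these uniformities genuinely uniform in $(x,t)\in\widetilde V\times\widetilde W$, rather than merely pointwise, is where the delicate estimate lies; everything else is routine oscillatory-integral analysis once the geometry of $\mathbb{R}\mathrm{T}'_{\Sigma_t}$ is set up as in section~\ref{sec:preliminaries}.
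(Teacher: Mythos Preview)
Your proposal has two genuine gaps that prevent it from going through.

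First, you repeatedly assume the tube case, writing $Z(x,t)=x+i\phi(t)$. Theorem~\ref{thm:microlocal_Gevrey_FBI} is stated and proved for the general real-analytic locally integrable structure with $Z(x,t)=x+i\phi(x,t)$; the tube specialization appears only later, as Theorems~\ref{thm:microlocal_Gevrey_FBI_tube} and~\ref{thm:microlocal_smooth_FBI_tube}. In the general case the phase is not simply ``quadratic in $x$ with a purely imaginary shift by $\phi(t)$'', so the contour-shift you describe, which amounts to absorbing a factor $e^{\phi(t)\cdot\zeta}$, is not available and the whole reduction to a standard $\mathbb{R}^m$ FBI transform breaks down.

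Second, and more seriously, your strategy for \textit{2}.~$\Rightarrow$~\textit{1}. rests on ``evaluating at $t=0$ and translating back through the trace characterization mentioned in the introduction''. That trace characterization is valid only for \emph{solutions} of the homogeneous system (and there only for hypo-analytic regularity). Theorem~\ref{thm:microlocal_Gevrey_FBI} is about an arbitrary $u\in\mathcal{C}^1(W;\mathcal{D}'(V))$, not a solution, and the conclusion concerns $\mathrm{WF}_s(u)$ as a subset of $T^\ast(\mathbb{R}^{m+n})$, i.e., it involves covectors with nontrivial $\tau$-components. Knowing only the decay of a \emph{partial} FBI transform in the $x$-frequencies for each fixed $t$ says nothing directly about these $\tau$-directions. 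The paper handles this by passing to the co-rank zero hypo-analytic structure $\widetilde{\Xi}(x,t)=(\widetilde{Z}(x,t),t)$ of section~\ref{sec:co-rank_zero}: one uses the inversion formula to decompose $\chi u(\cdot,t)=G(Z(\cdot,t),t)+\sum_j\mathrm{bv}(g_j(\cdot,t))$, where $G$ is a Gevrey-$s$ almost analytic function in $z$ (continuous in $t$) and each $g_j$ is holomorphic in a wedge whose profile cone satisfies $v\cdot\xi_0<0$; then one estimates the \emph{full} $(m+n)$-dimensional FBI transform $\widetilde{\mathfrak{F}}[(\widetilde{\chi}\otimes\psi)u](\widetilde{\Xi},\widetilde{\Theta})$ of each piece by another Stokes deformation, and concludes via Theorem~\ref{thm:FBI-antonio-nicholas}. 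This passage through $\widetilde{\mathfrak{F}}$ is the essential missing idea in your outline.

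For \textit{1}.~$\Rightarrow$~\textit{2}., the paper likewise does not compare with a ``standard'' FBI transform but writes $u$ locally as the boundary value of a Gevrey-$s$ almost analytic function on a wedge with profile satisfying $v\cdot(\xi_0,0)<0$, deforms the contour in $\mathbb{C}^m$ via Stokes, and estimates the four resulting integrals directly. Your commutator argument with $[\mathrm{L}_j,\chi]$ is not used here; the identity $\partial_{t_j}\mathfrak{F}[u]=\mathfrak{F}[\mathrm{L}_j u]$ plays its role in Theorem~\ref{thm:microlocal_hypoelliptic_tube}, not in the present proof.
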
 
\begin{thm}\label{thm:microlocal_smooth_FBI}
	Let $u \in \mathcal{C}^1 (W; \mathcal{D}^\prime (V))$ and $\xi_0 \in \mathbb{R}^m \setminus 0$. Then the following are equivalent:
	\begin{enumerate}
		\item[\textit{1.}]  $(0,0,\xi_0,0) \notin \mathrm{WF} (u)$;\\
	 	\item[\textit{2.}] For every $\chi \in \mathcal{C}_c^\infty(V)$, with $0\leq \chi \leq 1$ and $\chi \equiv 1$ in some open neighborhood of the origin, there exist $\widetilde{V} \subset V$, $\widetilde{W} \subset W$, open balls centered at the origin, an open cone $\mathcal{C} \subset \mathbb{R}^m \setminus 0$ containing $\xi_0$, such that for every $N > 0$ exists $C_N > 0$ satisfying
    	\begin{equation*}
		    |\mathfrak{F}[\chi u](t; Z(x,t), \zeta)| \leq \frac{C_N}{(1 + |\zeta|)^N}, \qquad \forall t \in \widetilde{W}, x \in \widetilde{V}, \zeta \in \mathbb{R}\mathrm{T}^\prime_{\Sigma_t}\big|_{Z(x,t)}( \mathcal{C}).
	\end{equation*}
\end{enumerate}
\end{thm}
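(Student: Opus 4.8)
The plan is to mirror the proof of Theorem~\ref{thm:microlocal_Gevrey_FBI}, simply replacing the Gevrey weight $e^{-\epsilon|\zeta|^{1/s}}$ by the family of polynomial weights $(1+|\zeta|)^{-N}$, $N\in\mathbb N$, and carrying the dependence on $N$ in place of the dependence on $\epsilon$. The three facts about the partial F.B.I.\ transform $\mathfrak F$ that drive that proof --- its continuity properties on $\mathcal C^1(W;\mathcal D'(V))$, the Gaussian off-diagonal decay of its kernel, and the approximate inversion formula of section~\ref{sec:FBI} --- do not refer to any regularity scale, so the contour deformations and the stationary / nonstationary phase estimates from section~\ref{sec:FBI} apply verbatim once the weights are adjusted.

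\medskip

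\noindent\emph{From \textit{2.} to \textit{1.}} Fix $\chi$. After shrinking $\widetilde V,\widetilde W$, passing to a slightly narrower open cone $\mathcal C'\Subset\mathcal C$ still containing $\xi_0$, and choosing a cut-off supported in $\widetilde V$ that equals $1$ near the origin, I would invoke the approximate inversion formula of section~\ref{sec:FBI} to write, near the origin,
\[
\chi u \;=\; u_1 \;+\; u_2 \;+\; r ,
\]
with $r\in\mathcal C^\infty$, $u_1$ the part of the inversion integral in which $\zeta$ is restricted to $\mathcal C'$, and $u_2$ the part with $\zeta$ outside $\mathcal C'$. On the integration region of $u_1$ hypothesis \textit{2.} gives $|\mathfrak F[\chi u]|\le C_N(1+|\zeta|)^{-N}$ for every $N$, while the inversion kernel and all of its $(x,t)$-derivatives grow at most polynomially in $\zeta$, uniformly for $(x,t)$ near the origin; because $u$ is of class $\mathcal C^1$ in the parameter, this makes $u_1$ smooth in the first-integral variables with all derivatives bounded uniformly for the parameter near the origin, so a covector with non-zero first-integral component, such as $(\xi_0,0)$, cannot belong to $\mathrm{WF}(u_1)$. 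For $u_2$ one uses that, the structure being of tube type, $Z(x,t)-Z(x',t)=x-x'$, so the inversion kernel carries the oscillatory factor $e^{i\langle x-x',\zeta\rangle}$; integrating by parts in $x'$ --- admissible since $\zeta$ remains in the ellipticity cone of this phase --- confines the first-integral frequencies of $u_2$ to the closure of the complement of $\mathcal C'$, which avoids the ray through $\xi_0$, hence $(0,0,\xi_0,0)\notin\mathrm{WF}(u_2)$ as well. Adding the three terms gives $(0,0,\xi_0,0)\notin\mathrm{WF}(u)$.

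\medskip

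\noindent\emph{From \textit{1.} to \textit{2.}} Fix $\chi$. Using that $(0,0,\xi_0,0)\notin\mathrm{WF}(u)$, and multiplying $\chi$ by a cut-off in the parameter if necessary (harmless, since $\mathfrak F$ acts only in the first-integral variables and $\widetilde W$ may be shrunk), I would decompose $\chi u=v'+v''$ with $v'\in\mathcal C^\infty$ and $\mathrm{WF}(v'')$ disjoint from a conic neighbourhood of $(0,0,\xi_0,0)$. Substituting this into $\mathfrak F[\chi u](t;Z(x,t),\zeta)$, the contribution of $v'$ is $O\!\big((1+|\zeta|)^{-N}\big)$ for every $N$, uniformly for $(x,t)$ near the origin, by the standard estimate for the F.B.I.\ transform of a smooth function (the place where the Gevrey bound was used in Theorem~\ref{thm:microlocal_Gevrey_FBI}, now replaced by its $\mathcal C^\infty$ analogue). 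For the contribution of $v''$ one inserts a conic partition of unity in the variable dual to the first-integral directions: on the cone where the phase of $\mathfrak F$ is nonstationary --- which, for $\mathcal C$ chosen narrow enough about $\xi_0$, is everything outside a small conic neighbourhood of the ray through $\xi_0$ --- repeated integration by parts yields decay $(1+|\zeta|)^{-N}$ for all $N$, while on the remaining cone the hypothesis on $v''$ applies directly and yields the same decay. Summing the two contributions gives the required polynomial estimate, i.e.\ item~\textit{2.}

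\medskip

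I expect the main obstacle to be exactly the one already present in Theorem~\ref{thm:microlocal_Gevrey_FBI}: keeping control of the remainder in the F.B.I.\ inversion formula and, above all, making every estimate uniform with respect to the parameter near the origin. That uniformity is where the assumption $u\in\mathcal C^1(W;\mathcal D'(V))$ is genuinely used, since it is what legitimises --- and makes uniform --- the parameter integrations involved in translating between the joint wave front set at $(0,0,\xi_0,0)$ and the decay of the partial F.B.I.\ transform. Beyond this, the argument is a routine transcription of section~\ref{sec:FBI} and of the proof of Theorem~\ref{thm:microlocal_Gevrey_FBI}, with polynomial weights in place of exponential ones.
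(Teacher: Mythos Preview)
Your opening sentence --- mirror the proof of Theorem~\ref{thm:microlocal_Gevrey_FBI} with polynomial weights in place of $e^{-\epsilon|\zeta|^{1/s}}$ --- is exactly what the paper does (it says so explicitly at the start of section~\ref{sec:FBI}). The problem is that the detailed outline you then give is \emph{not} a transcription of that proof; it is a different argument, and it has gaps.

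\medskip

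\textbf{On $2.\Rightarrow 1.$} You invoke ``the structure being of tube type'' to get $Z(x,t)-Z(x',t)=x-x'$ and then integrate by parts in $x'$ to localise the frequencies of $u_2$. Theorem~\ref{thm:microlocal_smooth_FBI} is stated for a general real-analytic locally integrable structure (the setup of section~\ref{sec:preliminaries}); tube type only enters later, in sections~3.1 and~\ref{sec:hypoelliptic}. In the general case $Z(x,t)-Z(x',t)$ depends on $\phi(x,t)-\phi(x',t)$ and your integration-by-parts step does not go through as written. The paper handles $u_2$ (its $g^\epsilon$) differently: it decomposes $\mathbb R^m\setminus\mathcal C$ into acute cones $\mathcal C_j$ and shows each $g_j(\cdot,t)$ is the boundary value of a function holomorphic on a wedge $\mathcal W_\delta(Z(V'',t),\Gamma_j)$ with $\xi_0\cdot\Gamma_j<0$; then it passes to the co-rank-zero structure of section~\ref{sec:co-rank_zero} and estimates the enlarged F.B.I.\ $\widetilde{\mathfrak F}$ via Stokes' theorem. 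Your treatment of $u_1$ (smoothness in the first-integral variables uniformly in $t$ forces $(\xi_0,0)\notin\mathrm{WF}$) is morally right, but note that the paper does not stop there either: it builds an almost analytic extension $G$ of $F$ and feeds it into the same enlarged-F.B.I.\ computation, so that both pieces are handled by one mechanism.

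\medskip

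\textbf{On $1.\Rightarrow 2.$} Your decomposition $\chi u=v'+v''$ with $v'\in\mathcal C^\infty$ and $\mathrm{WF}(v'')$ away from $(0,0,\xi_0,0)$ does not match the paper and, as written, does not close. The difficulty is that $\mathfrak F$ is a \emph{partial} transform: for fixed $t$ it only sees $v''(\cdot,t)$, and the hypothesis on $\mathrm{WF}(v'')$ in the joint variables $(x,t,\xi,\tau)$ says nothing directly about the slice $v''(\cdot,t)$. Your ``conic partition of unity in the dual to the first-integral directions'' followed by nonstationary phase is not obviously justified here. The paper's $1.\Rightarrow 2.$ instead uses the boundary-value characterisation of the wave front set: write $u$ (near the origin) as $\mathrm{bv}(f)$ for an almost analytic $f$ on a wedge $\mathcal W_\delta(V_1\times W_1,\Gamma)$ with $(\xi_0,0)\cdot\Gamma<0$ (in the $\mathcal C^\infty$ case, $|\bar\partial f|\le C_k|\mathrm{Im}|^k$ for every $k$), and then deform the $x'$-contour in $\mathfrak F[\chi u]$ into the complex by Stokes' theorem, using that the holomorphic extension $\mathcal Z_t$ of $Z(\cdot,t)$ is a biholomorphism. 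The four resulting integrals are estimated exactly as in the Gevrey proof, with the $\bar\partial$-term now producing $C_N(1+|\zeta|)^{-N}$ for every $N$ instead of $Ce^{-\epsilon|\zeta|^{1/s}}$.

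\medskip

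In short: keep your first sentence and discard the rest; the ``routine transcription'' you announce is correct, but what you wrote afterwards is a different route that relies on an unjustified tube assumption in one direction and does not bridge the partial-F.B.I./joint-wave-front gap in the other.
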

Theorem \ref{thm:microlocal_Gevrey_FBI} can be seen as a microlocal version of Theorem 3.4 of \cite{braun2022}.
This partial F.B.I. was first introduced by M. S. Baouendi, C.H. Chang and F. Treves in \cite{baouendi1982}, and it was widely used in the study of microlocal analytic hypoellipticity for CR structures (or extensions of CR functions on wedges), see for instance \cite{baouendi1985,baouendi1988,baouendi1982,baouendi1984}. It was also used as a technical tool in order to obtain microlocal regularity of traces, see for instance \cite{berhanu2012,hoepfner2008,hoepfner2010,hoepfner2015,asano,barostichi2009,barostichi2011,berhanu2009,braundasilva2022,braundasilva2021,hanges1992,hoepfner2018}. In 2022, the author of the present paper algo gave in \cite{braun2022} a local characterization of Gevrey-$s$ regularity for distributions $u$ such that $\mathrm{L}_j u \in G^s$, for $j = 1, \dots, n$, in terms of the decay of this partial F.B.I. transform. This partial F.B.I. was also studied by R. D. Medrado in his Ph.D. thesis \cite{medrado2016}, where he introduced the notion of hypo-DC (here DC stands for Denjoy-Carlman) structures.
%
%
\comment{
For each $1 < \kappa \leq 1$ and for every $u$ a $\mathcal{C}^1(U)$ function with compact support ($\mathcal{C}^1$ just to simplify it's definition, see section \ref{sec:FBI} for the general definition), we set for every $(x,t) \in U$ and $\xi \in \mathbb{R}^m$,
\begin{equation*}
	\mathfrak{F}_\kappa [u] (t; Z(x,t), \xi) = \int e^{i \xi \cdot (x - x^\prime) - |\xi|^\kappa |x - x^\prime|^2} u(x^\prime,t)\Delta_\kappa(x - x^\prime, \xi) \mathrm{d} x^\prime.
\end{equation*}
}
%
%

%

%
%

\section{Preliminaries}\label{sec:preliminaries}

%
%

In this section we shall recall some of the main definitions and results to make the reading of the present paper more accessible, and we refer to \cite{trevesbook} and for a more detailed discussion on locally integrable structures and hypo-analytic structures. Let $\Omega$ be a $\mathcal{C}^\infty$-smooth manifold, by a locally integrable structure $\mathcal{V}$ of rank $n$ on $\Omega$ we mean an $n$-dimensional $\mathcal{C}^\infty$-smooth subbundle of $\mathbb{C} \mathrm{T}\Omega$ such that its orthogonal bundle (in the dual sense), $\mathrm{T}^\prime \subset \mathbb{C} \mathrm{T}^\ast \Omega$, is locally generated by the differentials of $m$, $\mathcal{C}^\infty$-smooth, functions. In the case that $\Omega$ is $\mathcal{C}^\omega$-smooth manifold, we say that $\mathcal{V}$ is a real-analytic locally integrable structure if $\mathcal{V}$ is $\mathcal{C}^\omega$-smooth. In the present paper we shall deal only with real-analytic manifolds endowed with a real-analytic locally integrable structures. 

A crucial object in the study of smooth and Gevrey regularity via the F.B.I. transform is the so-called real structure bundle of maximally real submanifolds of $\mathbb{C}^m$, so we shall briefly recall it's definition. Let $\Sigma \subset \mathbb{C}^m$ be a $\mathcal{C}^\infty$-smooth submanifold. We say that $\Sigma$ is maximally real if, for every $p \in \Sigma$, one of the following equivalent conditions holds true:
\begin{itemize}
    \item 
        $\mathbb{C} \mathrm{T}_p \mathbb{C}^m \simeq \mathrm{T}_p^{(0,1)} \mathbb{C}^m \oplus \mathbb{C} \mathrm{T}_p \Sigma$;
    
    \item 
        The pullback map $j^\ast : \mathbb{C} \mathrm{T}_p^\ast \mathbb{C}^m \rightarrow \mathbb{C} \mathrm{T}_p^\ast \Sigma$, where $j$ is the inclusion map $\Sigma \hookrightarrow \mathbb{C}^m$, induces an isomorphism $j^\ast_{1,0} : {\mathrm{T}_{(1,0)}}_p \mathbb{C}^m \to \mathbb{C} \mathrm{T}_p^\ast \Sigma$,
    
    \item 
        The one forms $\mathrm{d} (z_1|_\Sigma), \dots, \mathrm{d} (z_m|_\Sigma)$ are linearly independent at $p$.
\end{itemize}
Here we are using the notation $\mathrm{T}_p^{(0,1)} \mathbb{C}^m$ for the set of the $(1,0)$-complex vector fields at $p$, \textit{i.e.} the set of all holomorphic vector fields at $p$, and ${\mathrm{T}_{(1,0)}}_p \mathbb{C}^m$ for the $(1,0)$-forms at $p$. The real structure bundle of $\Sigma$ is then defined by the image of $\mathrm{T}^\ast\Sigma$ under the isomorphism $( j_{1,0}^\ast )^{-1}$, and it is denoted by $\mathbb{R} \mathrm{T}^\prime_{\Sigma}$.

Let $\Omega$ be an $n+m$-dimensional $\mathcal{C}^\omega$-smooth manifold endowed with a real-analytic locally integrable structure $\mathcal{V}$ of rank $n$, \textit{i.e.} $\text{dim} \mathcal{V}_p = n$, for all $p \in \Omega$, and let $p_0$ be an arbitrary point in $\Omega$. There is a local coordinate system $(U, x_1, \dots, x_m, t_1, \dots, t_n)$, vanishing at $p_0$, and  a real-analytic map $\phi : U \longrightarrow \mathbb{R}^m$ satisfying $\phi(0) = 0$, and $\mathrm{d}_x \phi(0) = 0$, such that the differential of the functions
\begin{equation}\label{eq:defn_Z}
	Z_k(x,t) = x_k + i \phi_k(x, t),
\end{equation} 
for $k = 1, \dots, m$, span $\mathrm{T}^\prime$ on $U$. Shrinking $U$ if necessary, we shall assume that $Z_x(x,t)$ is invertible in $U$. Thus $\mathcal{V}$ is spanned on $U$ by
\begin{equation}\label{eq:defn_L}
    \mathrm{L}_j = \frac{\partial}{\partial t_j} - i \sum_{k=1}^m \frac{\partial \phi_k}{\partial t_j}(x,t) \mathrm{M}_k,
\end{equation}
for $j = 1,\dots, n$, where 
\begin{equation}\label{eq:defn_M}
    \mathrm{M}_k = \sum_{l=1}^m \left({}^tZ_x(x,t)^{-1} \right)_{kl} \frac{\partial}{\partial x_l}, 
\end{equation}
for $k = 1, \dots, m$. These vector fields satisfies the equations:
\begin{equation*}
    \begin{array}{l l}
        \mathrm{L}_j Z_k = 0 & \mathrm{L}_j t_r = \delta_{jr}\\
        \mathrm{M}_l Z_k = \delta_{lk} & \mathrm{M}_l t_r = 0,
    \end{array}
\end{equation*}
so in particular if $h \in \mathcal{C}^1(U)$, then 
\[
	\mathrm{d}h = \sum_{j=1}^n \mathrm{L}_j h \mathrm{d}t_j + \sum_{k=1}^m \mathrm{M}_k \mathrm{d}Z_k.
\]
Since we are in a local coordinate system, we can identify the open set $U \subset \Omega$ with an open set in $\mathbb{R}^{n+m}$, and then we take two open neighborhoods of the origin $V \subset \mathbb{R}^m$, and $W \subset \mathbb{R}^n$, such that $V \times W \subset U$.  Since the complex vector fields $\mathrm{L}_j$s are elliptic on the $t$ variables, if $u \in \mathcal{D}^\prime(V\times W)$ is such that $\mathrm{L}_j u \in \mathcal{C}^{\infty}(V\times W)$, then $u \in \mathcal{C}^\infty (W; \mathcal{D}^\prime (V))$. For every $t \in W$, the set
\begin{equation*}
    \Sigma_t = \{ Z(x,t) \; : \; x \in V \}
\end{equation*}
is a maximally real submanifold of $\mathbb{C}^m$. The real structure bundle of $\Sigma_t$ is then given by, see \cite{trevesbook},  
\begin{equation*}
    \mathbb{R}\mathrm{T}^\prime_{\Sigma_t} = \{ (Z(x,t), {}^tZ_x(x,t)^{-1} \xi) \; : \; x \in V, \xi \in \mathbb{R}^m \}. 
\end{equation*}
Shrinking once again if necessary, we shall assume that for every $t \in W$, every $z,z^\prime \in \Sigma_t$, and every $\zeta \in \mathbb{R}\mathrm{T}^\prime_{\Sigma_t}|_z \cup \mathbb{R}\mathrm{T}^\prime_{\Sigma_t}|_{z^\prime}$ the following holds true:
\begin{equation*}
    \begin{cases}
        |\Im \zeta| < \kappa |\Re \zeta|;\\
        \Re \{ i \zeta \cdot (z - z^\prime) - \langle \zeta \rangle \langle z - z^\prime \rangle^2 \} < - \kappa^\prime |\zeta| |z-z^\prime|^2,
    \end{cases}
\end{equation*}
for some $\kappa^\prime > 0$ and $0 < \kappa < 1$, where $\langle w \rangle^2 = w \cdot w$, for $w \in \mathbb{C}^m$ (this condition is called \emph{well-positionedness}). For every $u \in \mathcal{C}^1 (W; \mathcal{E}^\prime (V))$ we define the adapted partial F.B.I transform of $u$ as
\begin{equation}\label{eq:defn_FBI}
	\mathfrak{F}[u](t;z,\zeta) = \left \langle u(x,t), e^{i\zeta\cdot(z - Z(x,t)) - \langle \zeta \rangle \langle z - Z(x,t)\rangle^2} \Delta(z - Z(x,t),\zeta) \det Z_x(x,t)) \right\rangle_{\mathcal{D}^\prime_x},
\end{equation}
for every $t \in W$, $z \in \mathbb{C}^m$, and $\zeta \in \mathfrak{C}_1 = \{\eta \in \mathbb{C}^m \; : \; |\Im \eta| < |\Re \eta| \}$, where $\Delta(z,\zeta)$ is the Jacobian of the transformation 
\begin{equation*}
	\zeta \mapsto \zeta + iz\langle\zeta\rangle.
\end{equation*}
The adapted partial F.B.I. transform is holomorphic with respect to $z$ and $\zeta$, $\mathcal{C}^\infty$-smooth with respect to $t$, and it satisfies
\begin{equation}\label{eq:del_t_FBI}
	\begin{cases}
	\frac{\partial}{\partial t_j}\mathfrak{F}[u](t,z,\zeta) = \mathfrak{F}[\mathrm{L}_j u](t,z,\zeta)\\
	\frac{\partial}{\partial z_k}\mathfrak{F}[u](t,z,\zeta) = \mathfrak{F}[\mathrm{M}_k u](t,z,\zeta)
	\end{cases}
\end{equation}
for $j  = 1, \dots, n$, and $k = 1, \dots, m$. We can extend $Z(x,t)$ in the $x$ variable to the whole $\mathbb{R}^m$ by multiplying $\phi$ by a $\mathcal{C}_c^\infty$ function, with support contained in $V$ and identically one in some open ball centered in the origin, in such a way that we can change $V$ by $\mathbb{R}^n$ in above discussion. It is clear that the new structure generated by the extended $Z$ is not real-analytic any more, but it agrees with the original one in some open neighbourhood of the origin, that we are still going to denote by $V \times W$. The advantage of doing this extension is the following inversion formula (see \cite{braun2022}, and for an inversion formula modulo hypo-analytic functions see \cite{trevesbook}):
\begin{thm}\label{thm:FBI-inversion-formula}
Let $k \in [0, \infty]$, and $u\in\mathcal{C}^k (W;\mathcal{E}^\prime(V))$. Then
\begin{equation}\label{eq:FBI-inversion-formula}
    u(x,t)=\lim_{\epsilon\to 0^+}\frac{1}{(2\pi^3)^\frac{m}{2}}\iint_{\mathbb{R}\mathrm{T}^\prime_{\Sigma_t}}e^{i\zeta\cdot(Z(x,t)-z^\prime)-\langle\zeta\rangle\langle Z(x,t)-z^\prime\rangle^2-\epsilon\langle\zeta\rangle^2}\mathfrak{F}[u](t;z^\prime,\zeta)\langle\zeta\rangle^\frac{m}{2}\mathrm{d}z^\prime\wedge\mathrm{d}\zeta,
\end{equation}
where the convergence takes place in $\mathcal{C}^k(W;\mathcal{D}^\prime(V))$.
\end{thm}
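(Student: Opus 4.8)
The plan is to prove \eqref{eq:FBI-inversion-formula} by a direct computation that reduces it, after the change of variables built into $\mathbb{R}\mathrm{T}^\prime_{\Sigma_t}$ and one Gaussian integration, to the statement that a suitably rescaled Gaussian kernel is an approximate identity. First I would fix $t\in W$ and note that, since $\mathfrak{F}[u](t;\cdot,\cdot)$ depends only on $u(\cdot,t)$ and since $Z(\cdot,t)$, $Z_x(\cdot,t)$ together with their $t$-derivatives up to order $k$ are continuous and bounded on $W$, it suffices to prove the identity for each fixed $t$ with bounds locally uniform in $t$; convergence in $\mathcal{C}^k(W;\mathcal{D}^\prime(V))$ then follows by differentiating under the integral sign and using \eqref{eq:del_t_FBI} to rewrite $\partial_{t_j}\mathfrak{F}[u]$ as $\mathfrak{F}[\mathrm{L}_j u]$. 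Because $Z$ has been extended in $x$ to all of $\mathbb{R}^m$, the manifold $\Sigma_t$ is the complete graph $\{x+i\phi(x,t):x\in\mathbb{R}^m\}$ and $\mathbb{R}\mathrm{T}^\prime_{\Sigma_t}$ is globally parametrized by $(x^\prime,\xi)\in\mathbb{R}^m\times\mathbb{R}^m$ via $z^\prime=Z(x^\prime,t)$, $\zeta={}^tZ_x(x^\prime,t)^{-1}\xi$; a short computation with the explicit description of $\mathbb{R}\mathrm{T}^\prime_{\Sigma_t}$ recorded above shows that in these coordinates $\mathrm{d}z^\prime\wedge\mathrm{d}\zeta=\mathrm{d}x^\prime\wedge\mathrm{d}\xi$, the factor $\det Z_x$ from $\mathrm{d}z^\prime$ cancelling the factor $\det{}^tZ_x^{-1}$ from $\mathrm{d}\zeta$.

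Next I would substitute the definition \eqref{eq:defn_FBI} of $\mathfrak{F}[u]$ into the right-hand side of \eqref{eq:FBI-inversion-formula} and justify interchanging the $x^{\prime\prime}$-pairing with the $(x^\prime,\xi)$-integration for each fixed $\epsilon>0$. This is where the well-positionedness hypotheses enter: $\Re(-\epsilon\langle\zeta\rangle^2)\leq-\epsilon(1-\kappa^2)|\Re\zeta|^2$ produces genuine Gaussian decay in $\xi$, while $\Re\{i\zeta\cdot(Z(x,t)-z^\prime)-\langle\zeta\rangle\langle Z(x,t)-z^\prime\rangle^2\}\leq-\kappa^\prime|\zeta||Z(x,t)-z^\prime|^2$ (and the analogous bound for the $z^\prime-Z(x^{\prime\prime},t)$ factor coming from $\mathfrak{F}[u]$) controls the $x^\prime$-integration; since $u(\cdot,t)$ has compact support and finite order, the $x^{\prime\prime}$-pairing is bounded by $C(1+|\xi|)^N$ for some $N$, so the resulting triple integral is absolutely convergent and Fubini applies. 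By continuity of both $\mathfrak{F}$ and the integral operator appearing in \eqref{eq:FBI-inversion-formula} and density of $\mathcal{C}_c^\infty$ in $\mathcal{E}^\prime(V)$, I may then assume $u(\cdot,t)\in\mathcal{C}_c^\infty$, so that everything has become an ordinary Lebesgue integral.

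I would then carry out the $x^\prime$-integration first. With $x$ and $x^{\prime\prime}$ frozen, the exponent regarded as a function of $x^\prime$ has a nondegenerate critical point near the point where $Z(x^\prime,t)=\tfrac12(Z(x,t)+Z(x^{\prime\prime},t))$; using holomorphy of the integrand in $z$ one deforms the $z^\prime$-contour so that the critical point becomes real and the quadratic part of the phase becomes strictly negative definite, and one evaluates the resulting Gaussian. The leading term contributes a factor $(\pi/(2\langle\zeta\rangle))^{m/2}$ cancelling the $\langle\zeta\rangle^{m/2}$ already present, the combined phase $e^{i\zeta\cdot(Z(x,t)-Z(x^{\prime\prime},t))-\frac12\langle\zeta\rangle\langle Z(x,t)-Z(x^{\prime\prime},t)\rangle^2-\epsilon\langle\zeta\rangle^2}$, and a product of Jacobian factors; the very definition of $\Delta$ as the Jacobian of $\zeta\mapsto\zeta+iz\langle\zeta\rangle$ is what makes these factors recombine so that, after the changes of variables $\xi\mapsto{}^tZ_x(x^{\prime\prime},t)\zeta$ and $\zeta\mapsto\zeta+i\langle\zeta\rangle(Z(x,t)-Z(x^{\prime\prime},t))$, the remaining integral becomes $\int e^{i\eta\cdot(x-x^{\prime\prime})-\frac12|\eta||x-x^{\prime\prime}|^2-\epsilon(\cdots)}\,\mathrm{d}\eta$, up to a harmless quadratic correction from $Z(x,t)-Z(x^{\prime\prime},t)-Z_x(x^{\prime\prime},t)(x-x^{\prime\prime})=O(|x-x^{\prime\prime}|^2)$. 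Finally I would invoke the elementary fact that, for the right normalizing constant, $\int e^{i\eta\cdot w-\frac12|\eta||w|^2-\epsilon|\eta|^2}\,\mathrm{d}\eta$ converges in $\mathcal{D}^\prime(\mathbb{R}^m)$ to a multiple of $\delta(w)$ as $\epsilon\to0^+$ — a polar-coordinates computation that also pins down the constant $(2\pi^3)^{m/2}$ — to conclude that the right-hand side of \eqref{eq:FBI-inversion-formula} tends to $u(x,t)$; tracking the $t$-dependence of all estimates and differentiating under the integral as in the first paragraph then upgrades this to convergence in $\mathcal{C}^k(W;\mathcal{D}^\prime(V))$.

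The main obstacle is the $x^\prime$-integration of the third paragraph: because $\zeta={}^tZ_x(x^\prime,t)^{-1}\xi$ itself depends on $x^\prime$ and $Z$ is merely smooth after the cutoff extension, the $x^\prime$-integral is not a pure Gaussian, and one must use holomorphy in $z$ and $\zeta$ to deform contours, estimate the non-Gaussian remainder uniformly in $\epsilon$, and match every Jacobian factor ($\Delta$, $\det Z_x$, the Gaussian normalization, and the change-of-variables Jacobians) so that nothing survives in the limit but the approximate-identity kernel. It is also worth stressing where the global extension of $Z$ is used: it makes $\Sigma_t$ a complete graph, so that the $(x^\prime,\xi)$-integration ranges over all of $\mathbb{R}^m\times\mathbb{R}^m$ with no truncation — precisely what is needed for the approximate-identity argument to reproduce $u$ exactly, rather than modulo a hypo-analytic error as in the formula of \cite{trevesbook}.
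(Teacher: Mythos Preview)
The paper does not contain a proof of this theorem: immediately before the statement it writes ``The advantage of doing this extension is the following inversion formula (see \cite{braun2022}, and for an inversion formula modulo hypo-analytic functions see \cite{trevesbook}),'' and then simply states the result. So there is no in-paper argument to compare your proposal against; the proof lives in \cite{braun2022}, building on the classical case in Chapter~IX of \cite{trevesbook}.

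On its own merits your outline is headed in the right direction --- reduce to fixed $t$, insert the definition of $\mathfrak{F}[u]$, do the intermediate Gaussian, and recognize the remaining kernel as an approximate identity --- and you correctly identify the extension of $Z$ to all of $\mathbb{R}^m$ as what upgrades Treves' ``modulo hypo-analytic error'' formula to an exact one. One point worth tightening: in the step where you integrate in $x^\prime$ with $\zeta={}^tZ_x(x^\prime,t)^{-1}\xi$ still coupled to $x^\prime$, the cleaner (and standard) route is to use holomorphy in $\zeta$ together with the Gaussian factor $e^{-\epsilon\langle\zeta\rangle^2}$ to first deform the $\zeta$-contour from the fibre $\mathbb{R}\mathrm{T}^\prime_{\Sigma_t}|_{z^\prime}$ to a fixed contour (e.g.\ $\mathbb{R}^m$ or the fibre over $Z(x,t)$), thereby decoupling $\zeta$ from $z^\prime$; only then is the $z^\prime$-integral an honest holomorphic Gaussian along $\Sigma_t$, which one evaluates by a further contour shift. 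Doing the $x^\prime$-integral first, as you propose, forces you to treat a non-Gaussian integral and control remainders uniformly in $\epsilon$, which is feasible but noticeably more laborious than the two-step contour deformation; this is presumably why you flag it as the main obstacle.
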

To emphasize the important role of the real structure bundle on the study of $\mathcal{C}^\infty$ and Gevrey-smoothness and using these adapted F.B.I. transform we shall make a small digression. Suppose that $n = 0$, \textit{i.e.} we have only the $x$ variables, and let $u$ be a compacty supported distribution on $V$. Denoting by $F_u(x,\xi)$ the usual F.B.I. transform of $u$, namely,
\[
	F_u(x,\xi) = \left\langle u(y), e^{i (x - y) \cdot \xi - |\xi|^2 |x-y|^2} \right\rangle_{\mathcal{D}^\prime_y},
\]
we have that there exist constants $C,N > 0$ such that
\[
	|F_u(x,\xi)| \leq C_N(1+|\xi|)^N,
\]
for every $(x,\xi) \in V \times \mathbb{R}^m$. To obtain a similar bound on the adapted F.B.I. transform one must "stay on the real structure bundle", as can be seen in the following two estimates (see Proposition IX.1.1 and Proposition IX.2.1 of \cite{trevesbook}): 
\begin{enumerate}
	\item For every $K\subset \mathbb{C}^m$ compact set, and for every $0<\tilde{\kappa}<1$, there are constants $C,R>0$ such that
	\begin{equation*}
	    	|\mathfrak{F}[u](z,\zeta)|\leq Ce^{R|\zeta|},\quad \forall z\in K,\,\zeta\in\mathfrak{C}_{\tilde{\kappa}},
	\end{equation*}
	where $\mathfrak{C}_{\tilde{\kappa}} = \{ \zeta \in \mathbb{C}^m \; : \; |\Im \zeta| < \tilde{\kappa} |\Re \zeta| \}$.
	\item There exist an integer $k>0$ and a constant $C>0$ such that
	\begin{equation*}
   		 |\mathfrak{F}[u](z,\zeta)|\leq C(1+|\zeta|)^k,\quad\forall (z,\zeta)\in \mathbb{R}\mathrm{T}^\prime_{\Sigma}.
	\end{equation*}
\end{enumerate}
It also worth noticing that if an estimate holds "outside the real structure bundle", then one obtain holomorphic extendability, as seen in the following Theorem (for a proof see the proof of Theorem IX.3.1 of \cite{trevesbook}):
\begin{thm}\label{thm:importance_real_structure_holomorphic}
  Let $\Sigma \subset \mathbb{C}^m$ be a maximally real submanifolds, and assume that $\Sigma$  is well positioned near the origin. Then there exists $ U\subset\Sigma$ an open neighborhood of the origin such that, for every $u\in\mathcal{E}^\prime(U)$, the following are equivalent:
    \begin{enumerate}
        \item[\textit{1.}] There exist $\mathcal{O}\subset\mathbb{C}^m$, an open neighborhood of the origin, $F$ a holomorphic function on $\mathcal{O}$, such that $u|_{\mathcal{O}\cap \Sigma}=F|_{\mathcal{O}\cap \Sigma}$;\\
        \item[\textit{2.}] There exist $\mathcal{O}\subset\mathbb{C}^m$, an open neighborhood of the origin, $0<\tilde{\kappa}<1$, and $C,\epsilon>0$ such that
        
        \begin{equation*}
            \left|\mathfrak{F}[u](z,\zeta)\right|\leq Ce^{-\epsilon|\zeta|},\quad\forall (z,\zeta)\in\mathcal{O}\times\mathfrak{C}_{\tilde{\kappa}}
        \end{equation*}
       \item[\textit{3.}] There exists $\mathcal{O}\subset\mathbb{C}^m$, an open neighborhood of the origin, such that $\mathfrak{F}[u](z,\xi)$ is dominated by an integrable function with respect to $\xi\in\mathbb{R}^m$, uniformly in $z\in\mathcal{O}$.
    \end{enumerate}
    \end{thm}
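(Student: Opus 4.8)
The plan is to first shrink $U$ and the ambient complex neighbourhood so that the well-positionedness inequalities and the two basic estimates recalled above---the bound $Ce^{R|\zeta|}$ on the cones $\mathfrak{C}_{\tilde\kappa}$ and the polynomial bound $C(1+|\zeta|)^k$ on $\mathbb{R}\mathrm{T}^\prime_{\Sigma}$---hold on them, and then to prove the cycle of implications $(1)\Rightarrow(2)\Rightarrow(3)\Rightarrow(1)$, reproducing the argument of Theorem IX.3.1 of \cite{trevesbook}. The step $(2)\Rightarrow(3)$ is immediate: since $\mathbb{R}^m\subset\mathfrak{C}_{\tilde\kappa}$, restricting $\zeta$ to real $\xi$ turns the bound in $(2)$ into $|\mathfrak{F}[u](z,\xi)|\leq Ce^{-\epsilon|\xi|}$ for all $z\in\mathcal{O}$, and $Ce^{-\epsilon|\xi|}$ is an integrable dominating function on $\mathbb{R}^m$ (it stays integrable against the polynomial weight that later appears in the inversion formula).

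For $(1)\Rightarrow(2)$ I would pick $\chi\in\mathcal{C}_c^\infty$ equal to $1$ near the origin and supported where $F$ is holomorphic, and split $u=\chi u+(1-\chi)u$. Since $\mathrm{supp}\,(1-\chi)u$ is bounded away from $0$, for $z$ near $0$ one has $|z-z^\prime|\geq c>0$ on it, so well-positionedness gives $\Re\{i\zeta\cdot(z-z^\prime)-\langle\zeta\rangle\langle z-z^\prime\rangle^2\}\leq-\kappa^\prime|\zeta||z-z^\prime|^2\leq-\epsilon|\zeta|$ for $\zeta\in\mathfrak{C}_{\tilde\kappa}$, whence $|\mathfrak{F}[(1-\chi)u](z,\zeta)|\leq Ce^{-\epsilon|\zeta|}$ straight from the definition \eqref{eq:defn_FBI}. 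For $\chi u=\chi\cdot(F|_{\Sigma})$, on the region where $\chi\equiv1$ the integrand defining $\mathfrak{F}[\chi u]$ is the restriction to $\Sigma$ of a function holomorphic in the integration variable; one deforms that part of $\Sigma$ inside $\mathbb{C}^m$, holding its boundary (which lies in the region already treated) fixed, into a maximally real plane positioned so that $\Re\{i\zeta\cdot(z-z^\prime)-\langle\zeta\rangle\langle z-z^\prime\rangle^2\}\leq-\epsilon|\zeta|$ near the critical point, and Stokes' theorem then yields $|\mathfrak{F}[\chi u](z,\zeta)|\leq Ce^{-\epsilon|\zeta|}$. Adding the two contributions gives $(2)$.

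For $(3)\Rightarrow(1)$ I would begin from the regularized inversion integral of Theorem \ref{thm:FBI-inversion-formula} (with $k=0$ and $\epsilon>0$ fixed), whose integrand is a holomorphic $2m$-form in $(z^\prime,\zeta)$. By Stokes' theorem one may deform the cycle $\mathbb{R}\mathrm{T}^\prime_{\Sigma}$, keeping $\zeta$ essentially real, into a product cycle $\Sigma^\prime\times\mathbb{R}^m$ in which $\Sigma^\prime$ coincides with $\Sigma$ outside a small ball and, inside it, is a totally real plane through the complex point $w$ at which one now wishes to evaluate. For $w$ in a sufficiently small ball $\mathcal{O}$: on the outer part $|w-z^\prime|\geq c$, so well-positionedness forces exponential decay of the kernel and the polynomial bound on $\mathbb{R}\mathrm{T}^\prime_{\Sigma}$ suffices; on the inner part $w-z^\prime$ is real, so the phase has nonpositive real part and $(3)$ supplies an integrable majorant. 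Hence the integral converges absolutely and locally uniformly for $w\in\mathcal{O}$, depends holomorphically on $w$ (the $w$-dependence sitting only in the entire factor $e^{i\zeta\cdot(w-z^\prime)-\langle\zeta\rangle\langle w-z^\prime\rangle^2}$), and one may pass to the limit $\epsilon\to0^+$ by dominated convergence; the limit $F$ is holomorphic on $\mathcal{O}$ and, by the inversion formula \eqref{eq:FBI-inversion-formula}, agrees with $u$ on $\Sigma\cap\mathcal{O}$, which is $(1)$.

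The crux is the two contour-deformation steps---in $(1)\Rightarrow(2)$ and in $(3)\Rightarrow(1)$---where one must construct explicit homotopies through maximally real, well-positioned submanifolds (together with the attendant deformation of the $\zeta$-fibres), with the boundary held fixed, and verify that along the homotopy the real part of the phase keeps, or near the relevant critical point improves, the required sign \emph{uniformly} in $\zeta$ over the whole complex cone, respectively over the real structure bundle, all while keeping the integral absolutely convergent so that the $\epsilon\to0^+$ limit survives with only the integrable bound of $(3)$ available. These are precisely the technical points worked out in the proof of Theorem IX.3.1 of \cite{trevesbook}.
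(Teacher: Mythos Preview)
The paper does not give its own proof of this theorem; it simply states the result and refers the reader to the proof of Theorem IX.3.1 in \cite{trevesbook}. Your proposal is precisely a sketch of that argument---the cycle $(1)\Rightarrow(2)\Rightarrow(3)\Rightarrow(1)$ via cutoff plus contour deformation for $(1)\Rightarrow(2)$, the trivial restriction to real $\xi$ for $(2)\Rightarrow(3)$, and the inversion formula with a Stokes deformation of the integration cycle for $(3)\Rightarrow(1)$---and you even identify Treves' Theorem IX.3.1 as the source, so your approach coincides with what the paper indicates.
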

As mentioned before, this adapted partial F.B.I. transform was widely used on the study holomorphic extendability (or hypo-analyticity) for solutions of locally-integrable structures, \textit{i.e.} distributions $u$ such that, $\mathbb{L} u =0$, and to the study of the regularity of traces. Here we want to point out one more time, that when dealing with holomorphic extendability of solutions, it is enough to prove the holomorphic extendability of its trace, for if $u(x,0) = F(Z(x,0))$, for some holomorphic function $F$, then the function $F(Z(x,t))$ is a solution, because $\bar{\partial} F= 0$, and it agrees with $u$ when $t = 0$. Then by an application of the Baouendi-Treves approximation formula (see \cite{baouendi1982} and \cite{trevesbook}), one can show that $u = F\circ Z$ in some open neighborhood of the origin. This is not true in general for other types of regularity.

In the proof of Theorem \ref{thm:microlocal_Gevrey_FBI}, we shall use the following result proved in \cite{braundasilva2022}, in the case of real-analytic maximally real submanifolds of $\mathbb{C}^m$, \textit{i.e.} the case $n = 0$:
\begin{thm}[N. Braun Rodrigues and A. V. da Silva Jr.]\label{thm:FBI-antonio-nicholas}
Let $u \in \mathcal{D}^\prime(V)$, let $s > 1$, and $\xi_0 \in \mathbb{R}^m \setminus \{0\}$. The following are equivalent:
\begin{enumerate}
	\item[\textit{1.}] $(0,\xi_0) \notin WF_s(u)$;\\
	\item[\textit{2.}] for every $\chi \in \mathcal{C}^\infty(V)$, with $\chi \equiv 1$ in some open neighborhood of the origin, there exist $V^\prime \subset V$ an open neighborhood of the origin, $\mathcal{C} \subset \mathbb{R}^m \setminus \{0\}$ an open cone containing $\xi_0$, and constants $C,\epsilon >0$ such that
	\begin{equation*}
		|\mathfrak{F}[\chi u](z,\zeta)| \leq C e^{-\epsilon|\zeta|^\frac{1}{s}}, \eqno{\forall (z,\zeta) \in \mathbb{R}\mathrm{T}^\prime_{Z(V^\prime)}(\mathcal{C}). }
	\end{equation*}
\end{enumerate}
\end{thm}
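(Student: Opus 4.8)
The plan is to pull the statement back, through the real-analytic parametrization $x\mapsto Z(x)=x+i\phi(x)$ of $\Sigma$, to the flat model $\Sigma=\mathbb{R}^m$, where it becomes the Gevrey analogue of Sj\"ostrand's FBI characterization of the wave front set, and to bridge the two by a single comparison lemma in which all the work is concentrated. First I would treat the flat model $\phi\equiv 0$: there $\mathbb{R}\mathrm{T}^\prime_\Sigma=\mathbb{R}^m\times\mathbb{R}^m$ and, since $\Delta(0,\zeta)=1$, $\mathfrak{F}[\chi u](x,\xi)$ is, up to a nonvanishing real-analytic factor, the standard FBI transform $\mathfrak{F}_{\mathrm{st}}[\chi u](x,\xi):=\langle\chi u(y),e^{i(x-y)\cdot\xi-\langle\xi\rangle\langle x-y\rangle^2}\rangle$, which in Fourier variables is $c_m\langle\xi\rangle^{-m/2}\int e^{ix\cdot\eta}e^{-|\eta-\xi|^2/(4\langle\xi\rangle)}\widehat{\chi u}(\eta)\,d\eta$, a Gaussian average concentrating $\eta$ in a ball of radius $\sim\langle\xi\rangle^{1/2}$ about $\xi$. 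An elementary Gaussian-tail estimate — splitting that integral at radius $\langle\xi\rangle^{1/2+\delta}$ with $\tfrac{1}{2s}<\delta<\tfrac12$, the complement costing $e^{-c\langle\xi\rangle^{2\delta}}$ and frequencies outside a cone costing $e^{-c|\xi|}$ — then shows that $(0,\xi_0)\notin\mathrm{WF}_s(\chi u)$ is equivalent to $|\mathfrak{F}_{\mathrm{st}}[\chi u](x,\xi)|\le Ce^{-\epsilon|\xi|^{1/s}}$ on a conic neighbourhood of $(0,\xi_0)$ (the reverse implication also using the flat FBI inversion formula and a frequency-support argument).

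The core is the comparison lemma: for a cutoff $\chi$ that is $1$ near $0$ with small support, for $x$ near $0$ and $\xi$ in a cone around $\xi_0$,
\begin{equation*}
	\mathfrak{F}[\chi u]\bigl(Z(x),{}^tZ_x(x)^{-1}\xi\bigr)=c(x)\,\mathfrak{F}_{\mathrm{st}}[\chi u](x,\xi)+R(x,\xi),\qquad |R(x,\xi)|\le Ce^{-\epsilon|\xi|^{1/s}},
\end{equation*}
with $c$ real-analytic and nonvanishing near $0$ (and $\mathfrak{F}_{\mathrm{st}}$ carrying a Gaussian weight of size $\sim\langle\xi\rangle$, whose exact value is immaterial for the previous paragraph). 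To prove it I would insert the Fourier inversion of $\chi u$ into \eqref{eq:defn_FBI}, turning the left-hand side into $\int\widehat{\chi u}(\eta)\,I(x,\zeta,\eta)\,d\eta$ with $\zeta={}^tZ_x(x)^{-1}\xi$, where $I(x,\zeta,\eta)$ is the oscillatory integral in $x'$ with phase $\zeta\cdot(Z(x)-Z(x'))+x'\cdot\eta+i\langle\zeta\rangle\langle Z(x)-Z(x')\rangle^2$. Since $\mathrm{d}_x\phi(0)=0$ gives $Z_x(0)=\mathrm{Id}$, for $\eta$ near $\xi$ this phase has a nondegenerate critical point at $x'=x$, with Hessian $\approx 2i\langle\zeta\rangle\,\mathrm{Id}$ — the sign furnished by well-positionedness — and, $\phi$ being \emph{real-analytic}, it extends holomorphically in $x'$. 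The analytic stationary phase lemma (steepest descent, the amplitude and remainders controlled by the Cauchy inequalities $|\partial^\alpha\phi|\le A^{|\alpha|+1}\alpha!$) then shows that $I(x,\zeta,\cdot)$ coincides, up to $O(e^{-c\langle\zeta\rangle})$, with the Gaussian kernel of $\mathfrak{F}_{\mathrm{st}}$, while the range $|\eta-\xi|\gtrsim\langle\zeta\rangle^{1/2+\delta}$ contributes only $O(e^{-c\langle\zeta\rangle^{2\delta}})$ by non-stationary phase; integrating against $\widehat{\chi u}$, choosing $\delta$ as before, and using $\langle\zeta\rangle\sim|\xi|$ near $0$ gives the lemma.

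Granting the lemma, both implications are immediate from the flat model applied to $\chi u$: decay of $\mathfrak{F}[\chi u]$ on $\mathbb{R}\mathrm{T}^\prime_{Z(V^\prime)}(\mathcal{C})$ forces decay of $\mathfrak{F}_{\mathrm{st}}[\chi u]$ on a conic neighbourhood of $(0,\xi_0)$, hence $(0,\xi_0)\notin\mathrm{WF}_s(\chi u)=\mathrm{WF}_s(u)$ near the origin, and conversely. Two bookkeeping points finish the argument: since $Z_x(0)=\mathrm{Id}$, the fibre directions $\{\,{}^tZ_x(x)^{-1}\xi:x\in V^\prime,\ \xi\in\mathcal{C}\,\}$ of the real structure bundle and ordinary cones around $\xi_0$ can be traded for one another after shrinking $V^\prime$ and $\mathcal{C}$; and since $\mathfrak{F}[\chi u]$ is holomorphic in $\zeta$, an estimate on the line $\zeta={}^tZ_x(x)^{-1}\xi$ propagates to a full conic neighbourhood, so one really recovers an estimate on $\mathbb{R}\mathrm{T}^\prime_{Z(V^\prime)}(\mathcal{C})$.

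The hard part is the comparison lemma, and inside it the Gevrey-precise control of the steepest-descent step: one must check that well-positionedness together with the smallness of $V$ keeps the real part of the exponent in \eqref{eq:defn_FBI} of order $-\langle\zeta\rangle|\cdot|^2$ along the whole deformed contour, that the amplitude (which carries $\Delta$, $\det Z_{x'}$ and implicitly $\langle\zeta\rangle$) stays real-analytic with uniform bounds, and hence that every remainder term and every frequency tail is genuinely $O(e^{-\epsilon|\zeta|^{1/s}})$ uniformly. Everything else — the flat Gaussian estimates, and the covariance of $\mathrm{WF}_s$ under the real-analytic diffeomorphism $Z$ together with the matching of conic directions at the origin — is routine.
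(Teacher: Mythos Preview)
First, note that the paper does not itself prove this theorem; it is quoted from \cite{braundasilva2022} and used as a black box in the proof of Theorem~\ref{thm:microlocal_Gevrey_FBI}. The techniques visible in that latter proof --- for $1\Rightarrow 2$, writing $u$ near the origin as a boundary value of a Gevrey almost-analytic function from a wedge whose profile avoids $\xi_0$ and then deforming the $x'$-contour in the FBI integral via Stokes' theorem; for $2\Rightarrow 1$, using the FBI inversion formula to split $\chi u$ into a Gevrey piece (from the cone $\mathcal{C}$) plus boundary values of holomorphic functions from wedges dual to complementary cones --- are almost certainly the ones used in the cited reference for the $n=0$ case at hand.

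Your strategy is genuinely different: reduce to the flat model by an analytic-stationary-phase comparison of the adapted FBI with the standard one. The flat-model step and the bookkeeping at the end are fine, but the comparison lemma as you state it is too strong and this is where the gap lies. After Fourier inversion, analytic stationary phase on $I(x,\zeta,\eta)$ does yield a Gaussian in $\eta-\xi$ of width $\sim\langle\zeta\rangle^{1/2}$ up to $O(e^{-c\langle\zeta\rangle})$, but that Gaussian is \emph{not} the kernel of $\mathfrak{F}_{\mathrm{st}}$: its centre, quadratic form, and amplitude all carry $x$- and $\eta$-dependent distortions coming from $Z_x$, from $\langle\zeta\rangle$ versus $\langle\xi\rangle$, and from the higher Taylor terms of $\phi$ at the critical point. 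Integrating against $\widehat{\chi u}(\eta)$ therefore does not produce $c(x)\,\mathfrak{F}_{\mathrm{st}}[\chi u](x,\xi)$ plus a remainder that is small \emph{independently of} $u$; what you actually obtain is another FBI-type transform of $\chi u$, related to $\mathfrak{F}_{\mathrm{st}}$ by composition with an analytic pseudodifferential operator rather than by a scalar multiplication. The conclusion you want still follows --- any FBI transform with analytic phase having nondegenerate quadratic imaginary part characterizes the same $\mathrm{WF}_s$, and your Gaussian-tail splitting at radius $\langle\xi\rangle^{1/2+\delta}$ can be run directly on the adapted transform --- but the lemma must be reformulated along those lines, and the argument then becomes essentially Sj\"ostrand's invariance of $\mathrm{WF}_s$ under change of FBI phase, which is considerably more than a one-line stationary-phase identity. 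The contour-deformation route sidesteps this entirely by never comparing the two transforms: it works intrinsically on $\Sigma$ via the almost-analytic-extension description of $\mathrm{WF}_s$, which is why it is the cleaner argument here.
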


%
%
%
%
%
\subsection{Going to a co-rank zero hypo-analytic structure}\label{sec:co-rank_zero}
%
%
In this section we shall associate to $\mathcal{V}$ a hypo-analytic structure of co-rank zero near the origin. For every $(x,t) \in V \times W$ we set
\begin{equation*}
    \Xi(x,t) = (Z(x,t),t).
\end{equation*}
Notice that $\mathrm{d} \Im \Xi$ it is not necessarily zero at the origin. Since we need well-positionednes to apply the adapted F.B.I. transform technique, we shall compose $\Xi$ with a linear transformation in $\mathbb{C}^{n+m}$ so we end with a well-positioned maximally real submanifold of $\mathbb{C}^{n+m}$. Let $H$ be the following matrix
\begin{equation*}
    H = \left(
        \begin{array}{cc}
            \text{Id} & - i \mathrm{d}_t\phi(0)t \\
            0 & \text{Id} 
        \end{array}
        \right).
\end{equation*}
We define $\widetilde{\Xi}(x,t) = H \Xi(x,t) = (\widetilde{Z}(x,t),t) = (x + i\widetilde{\phi}(x,t),t)$, where $\widetilde{\phi}(x,t) = \phi(x,t) - \mathrm{d}_t\phi(0)t$. Since $\mathrm{d} \Im \widetilde{\Xi}(0) = 0$, shrinking $V$ and $W$ if necessary, we can assume that $\widetilde{\Xi}(V \times W) \subset \mathbb{C}^{m+n}$ is a well-positioned maximally real submanifold, and its real-structure bundle's fiber can be described as 
\begin{equation*}
    {}^t\mathrm{d}\widetilde{\Xi}(x,t)^{-1}(\xi,\tau) = (\zeta, \tau - {}^t\widetilde{Z}_t(x,t)\zeta),
\end{equation*}
where $\zeta = {}^tZ_x(x,t)^{-1}\xi$.  We shall denote $\widetilde{\Theta} = {}^t\mathrm{d}\widetilde{\Xi}(x,t)^{-1}(\xi,\tau)$, and $\mathcal{H} = \widetilde{\Xi}(V \times W)$. So let $0<\tilde{\kappa},\tilde{\kappa}^\prime$, $\tilde{\kappa}<1$, be such that, for every $\widetilde{\Xi}, \widetilde{\Xi}^\prime \in \mathcal{H}$, and every $\widetilde{\Theta} \in \mathbb{R}\mathrm{T}^\prime_{\mathcal{H}}|_{\widetilde{\Xi}} \cup \mathbb{R}\mathrm{T}^\prime_{\mathcal{H}}|_{\widetilde{\Xi}^\prime}$ the following holds true:
\begin{equation*}
    \begin{cases}
        |\Im \widetilde{\Theta}| < \tilde{\kappa} |\Re \widetilde{\Theta}|;\\
        \Re \{ i \widetilde{\Theta} \cdot (\widetilde{\Xi} - \widetilde{\Xi}^\prime) - \langle \widetilde{\Theta} \rangle \langle \widetilde{\Xi} - \widetilde{\Xi}^\prime \rangle^2 \} < - \tilde{\kappa}^\prime |\widetilde{\Theta}| |\widetilde{\Xi} - \widetilde{\Xi}^\prime|^2,
    \end{cases}
\end{equation*}
In view of $|\Re \zeta| < \kappa |\Im \zeta|$, we have that
\begin{equation}\label{eq:comparison<zeta>|zeta|}
	\Re \langle \zeta \rangle  \geq \sqrt{\frac{1-\kappa^2}{1+\kappa^2}}|\zeta|, \quad \left|\frac{\zeta}{\langle \zeta \rangle}\right| \leq \sqrt{\frac{1+\kappa^2}{1-\kappa^2}}
\end{equation}
Furthermore, there exists $c_1,C_1> 0$ such that $|{}^{\mathrm{t}}\widetilde{Z}_t(x,t)\zeta| < c_1|\zeta|$, $|\xi| < C_1|\zeta|$. Let $0<c_0<1$, and assume that $|\tau| < c_0|\xi|$. Then
\begin{equation}\label{eq:comparison-Theta}
	|\widetilde{\Theta}| \leq \sqrt{1 + (c_1 +c_0C_1)^2}|\zeta|.
\end{equation}
%
%
%
\section{Gevrey wave-front set characterization via a partial F.B.I. transform}\label{sec:FBI}
%
%
In this section we shall prove Theorems \ref{thm:microlocal_Gevrey_FBI} and \ref{thm:microlocal_smooth_FBI}. Actually, we shall only present the proof of  Theorem \ref{thm:microlocal_Gevrey_FBI} in view of the similarities between the two proofs, and we use the same notation as in section \ref{sec:preliminaries}. 
\begin{proof}[Proof of Theorem \ref{thm:microlocal_Gevrey_FBI}]
\textit{1}. $\Rightarrow$ \textit{2}.:

\vspace{0,4cm}

Let $\Phi: \mathcal{O}_1 \times \mathcal{O}_2 \longrightarrow \mathbb{C}^m$ the holomorphic extension of $\phi$, and for every $\tau \in \mathcal{O}_2$ we define $\mathcal{Z}_\tau(z) = z + i \Phi(z,\tau)$, where $z \in \mathcal{O}_1$.
Since for every $\tau \in \mathcal{O}_2$ the map $\mathcal{Z}_\tau$ is holomorphic, and $\partial_z \mathcal{Z}_0(0) = \text{Id}$, shrinking if necessary $W$, $V$, and $\mathcal{O}_1$, we have that the map $\mathcal{Z}_t$ is a biholomorphism on $\mathcal{O}_1$, for every $t \in W$. 

For simplicity we shall assume that the restriction of $u$ to an open neighborhood of the origin is the boundary value of a single Gevrey-$s$ almost analytic function, \textit{i.e}. there exist $V_1 \subset V$ and $W_1 \subset W$ two open balls centered origin, $\Gamma \subset \mathbb{R}^{m+n} \setminus 0$ an open convex cone with $(\xi_0 ,0) \cdot \Gamma < 0$, and  $f \in \mathcal{C}^\infty(\mathcal{W}_\delta(V_1 \times W_1, \Gamma))$ satisfying
\begin{equation*}
	\begin{cases}
		|f(z,\tau)| \leq \frac{C_1}{|v|^{k_0}} \\
		|(\bar{\partial}_z + \bar{\partial}_\tau)f(z,\tau)| \leq C_2^{k+1} k!^{s-1} |\Im(z,\tau)|^k 
	\end{cases}
	\eqno{
		\begin{array}{l l}
		(z,\tau) \in \mathcal{W}_\delta(V_1 \times W_1, \Gamma) \\ 
		k \in \mathbb{Z}_+
		\end{array},
		}
\end{equation*}
for some constants $\delta, C_1, C_2>0$, and $k_0 \in \mathbb{Z}_+$, such that $u|_{V_1 \times W_1} = \mathrm{bv} (f)$. Note that it is enough to prove \eqref{eq:FBI-decay-gevrey} for some specific cut-off function $\chi$, because if $\Psi$ is another cut-off function such that $\Psi \equiv 1$ in some neighborhood of the origin, then $\chi - \Psi \equiv 0$ in some neighborhood of the origin. So
let $V_4 \Subset V_3 \Subset V_2 \Subset V_1$ be open neighborhoods of the origin, as small as we want, and let $\chi \in \mathcal{C}_c^\infty(V_2)$, with $\chi \equiv 1$ in $V_3$. Let $v \in \Gamma \cap \mathbb{S}^{n+m-1}$  be fixed. Since $u \in \mathcal{C}^1(W; \mathcal{D}^\prime(V))$ then, for every $t \in W_1$, $x \in V$ and $\xi \in \mathbb{R}^m$, writing $\zeta = {}^tZ_x(x,t)^{-1}\xi$, we have that
\begin{align*}
	\mathfrak{F}[\chi u](t; Z(x,t), \zeta)  = \lim_{\lambda \to 0^+} \int & e^{i \zeta \cdot (Z(x,t) - Z(x^\prime,t)) - \langle \zeta \rangle \langle Z(x,t) - Z(x^\prime,t) \rangle^2} f(x^\prime + i\lambda v^\prime, t + i\lambda v^{\prime\prime}) \chi(x^\prime) \cdot \\
	& \cdot \Delta(Z(x,t) - Z(x^\prime,t), \zeta) \mathrm{d}Z(x^\prime,t).
\end{align*}
In view of $v \cdot \xi_0 < 0$, and shrinking if necessary $V_2$, there exist $W_2 \subset W_1$ an open neighborhood of the origin, $\mathcal{C} \subset \mathbb{R}^m \setminus 0$ an open convex cone containing $\xi_0$, and $c^\sharp > 0$ such that 
\begin{equation*}
	v \cdot \Re \zeta \leq - c^\sharp |\zeta| \eqno{\forall (x,t) \in V_2 \times W_2, \zeta \in \mathbb{R}\mathrm{T}^\prime_{\Sigma_t}\big|_{Z(x,t)}( \mathcal{C}).}
\end{equation*}	
We shall assume that $V_2$ and $W_2$ are small enough so
\begin{equation*}
	|Z(x,t) - Z(x^\prime, t)| < \frac{c^\sharp}{8|v^\prime|} \eqno{\forall x,x^\prime \in V_2, \forall t \in W_2.}
\end{equation*}
 For $0 < \lambda < \delta$, $t \in W_2$, and $w \in \mathcal{Z}_t\left(\{x + i\tilde{v}^\prime\,:\, x\in V_1, (\tilde{v}^\prime,0) + \lambda v \in \Gamma_\delta \} \right)$  we define
\begin{equation*}
    F_{t,\lambda}(w) = f(\mathcal{Z}_t^{-1}(w) + i\lambda v^\prime, t + i\lambda v^{\prime \prime}). 
\end{equation*}
Then $F_{t,\lambda} \in \mathcal{C}^\infty\left(\mathcal{Z}_t\left(\{x + i\tilde{v}^\prime\,:\, x\in V_1, (\tilde{v}^\prime,0) + \lambda v \in \Gamma_\delta \} \right) \right)$, for every $0 < \lambda < \delta$, $t \in W_2$. In view of 
\begin{equation*}
	|\Im \mathcal{Z}_t^{-1}(Z(x,t) + i \gamma v^\prime)| \leq \tilde{C} \gamma |v^\prime| \eqno{\forall (x,t) \in V_2\times W_1, 0 \leq \gamma \ll 1,}
\end{equation*}
there exist $C_3 > 0$, $0 < \delta^\prime < \delta$ and $\sigma > 0$, such that, for every $0 \leq \gamma \leq \sigma$, $0 < \lambda < \delta^\prime$, and $(x, t) \in V_2\times W_2$, the following holds true:
\begin{equation*}
    |\bar{\partial}_w F_{t,\lambda}(Z(x,t) + i\gamma v^\prime)| \leq C_3^{k+1} k!^{s-1} (\gamma + \lambda)^k,
    \eqno{\forall k \in \mathbb{Z}_+.}
\end{equation*}
Let  $ 0 < \lambda < \delta^\prime$, $x \in V_4$, $t \in W_2$, and $\zeta \in \mathbb{R}\mathrm{T}^\prime_{\Sigma_t}\big|_{Z(x,t)}( \mathcal{C})$, \textit{i.e.} $\zeta = {}^tZ_x(x,t)^{-1}\xi$ for some $\xi \in \mathcal{C}$, be fixed. By Stokes's Theorem , and writing $z = Z(x,t)$, we have that
\begin{align*}
	& \int e^{i \zeta \cdot (z - Z(x^\prime,t)) - \langle \zeta \rangle \langle z - Z(x^\prime,t) \rangle^2} f(x^\prime + i\lambda v^\prime, t + i\lambda v^{\prime\prime}) \chi(x^\prime) \Delta(z - Z(x^\prime,t), \zeta) \mathrm{d}Z(x^\prime,t) = \\
	& \int e^{i \zeta \cdot (z - Z(x^\prime,t)) - \langle \zeta \rangle \langle z - Z(x^\prime,t) \rangle^2} F_{t,\lambda}(Z(x^\prime,t)) \chi(x^\prime) \Delta(z - Z(x^\prime,t), \zeta) \mathrm{d}Z(x^\prime,t) = \\
	& = \int_{V_2 \setminus V_3} e^{i \zeta \cdot (z - Z(x^\prime,t)) - \langle \zeta \rangle \langle z - Z(x^\prime,t) \rangle^2} F_{t,\lambda}(Z(x^\prime,t)) \chi(x^\prime) \Delta(z - Z(x^\prime,t), \zeta) \mathrm{d}Z(x^\prime,t)\\
	& + \int_{ V_3} e^{i \zeta \cdot (z - Z(x^\prime,t) - i \sigma v^\prime) - \langle \zeta \rangle \langle z - Z(x^\prime,t) - i \sigma v^\prime \rangle^2} F_{t,\lambda}(Z(x^\prime,t) + i \sigma v^\prime) \Delta(z - Z(x^\prime,t) - i \sigma v^\prime, \zeta) \mathrm{d}Z(x^\prime,t)\\
	& - \int_{Z(\partial V_3,t) + i[0,\sigma]v^\prime} e^{i \zeta \cdot (z - z^\prime) - \langle \zeta \rangle \langle z -z^\prime \rangle^2} F_{t,\lambda}(z^\prime) \Delta(z - z^\prime, \zeta)
	\mathrm{d}z^\prime\\
	& - \int_{ Z(V_3,t) + i[0,\sigma]v^\prime} e^{i \zeta \cdot (z - z^\prime) - \langle \zeta \rangle \langle z -z^\prime \rangle^2} \Delta(z - z^\prime, \zeta) \sum_{k=1}^m \bar{\partial}_{w_k}F_{t,\lambda}(z^\prime)\mathrm{d}\bar{z}^\prime_k \wedge \mathrm{d}z^\prime.
\end{align*}
We shall estimate these last four integrals separately, and we refer to them as \textbf{(I)},\textbf{(II)},\textbf{(III)}, and \textbf{(IV)}.  Assuming $\sigma < c^\sharp/(2|v^\prime|^2)$, and in view of $c^\sharp/2 - 2|v^\prime||z - Z(x^\prime,t)| > c^\sharp/4$, for every $x^\prime \in V_2$, we have that, for every $x^\prime \in V_2$ and $0 \leq \gamma \leq \sigma$, the following estimate holds true:
\begin{equation} \label{eq:estimate_exponential_deformation_FBI_decay_caracterization}
	\left| e^{i\zeta\cdot(Z(x,t) - Z(x^\prime,t) - i \gamma v^\prime)-\langle \zeta \rangle \langle Z(x,t) - Z(x^\prime,t) - i \gamma v^\prime \rangle^2}\right|  \leq e^{-c|\zeta||Z(x,t) - Z(x^\prime,t)|^2 - \frac{c^\sharp\gamma}{4}|\zeta|}
\end{equation}
In the integrals \textbf{(I)} and \textbf{(III)} we have that $|Z(x,t) - Z(x^\prime,t)|^2 \geq \rho$, for some $\rho > 0$, therefore we can estimate the absolute value of both integrals by a constant times $e^{-c \rho |\zeta|}$. In the integral \textbf{(II)} we have that $\gamma = \sigma$, thus we can estimate the absolute value of \textbf{(II)} by a constant times $e^{-\sigma \frac{c^\sharp}{4}|\xi|}$. In the last integral we shall use the decay of $|\bar{\partial} F |$, obtaining that for every $k \in \mathbb{Z}_+$,
\begin{align*}
	|\textbf{(IV)}| & \leq \text{Const} \cdot C_3^{k+1}  k!^{s-1} \int_0^\sigma e^{-\frac{|\xi|c^\sharp}{4}\gamma} (\gamma + \lambda)^k \mathrm{d}\lambda\\
	& \leq \text{Const} \cdot C_3^{k+1} k!^{s-1} \sum_{j = 0}^k \binom{k}{j} \lambda^{k-j} \int_0^\infty e^{-\frac{|\xi| c^\sharp}{4} \lambda} \lambda^j \mathrm{d}\lambda\\
	& \leq \text{Const} \cdot \frac{C_3^{k+1} (\lambda + 4/c^\sharp)^k}{|\xi|^k} k!^s\\
	& \leq \text{Const} \cdot \frac{C_3^{k+1} (\delta^\prime+ 4/c^\sharp)^k}{|\xi|^k} k!^s
\end{align*}
Now since all the four integrals can be estimated uniformly on $0 < \lambda < \delta^\prime$, we have that there exist $C,\epsilon>0$ such that 
\begin{equation*}
	| \mathfrak{F}[\chi u](t; Z(x,t), \zeta)| \leq C e^{-\epsilon |\zeta|^{\frac{1}{s}}} \eqno{\forall (x, t) \in V_4 \times W_2, \zeta \in \mathbb{R}\mathrm{T}^\prime_{\Sigma_t}\big|_{Z(x,t)}(\mathcal{C}).}
\end{equation*}

\vspace{0,4cm}
\textit{2}. $\Rightarrow$ \textit{1}.:

\vspace{0,4cm}
To prove that $(0,0,\xi_0,0)$ does not belong to the Gevrey-$s$ wave front set of $u$ we shall use the F.B.I. transform with respect to the hypo-analytic structure of maximal rank defined in section \ref{sec:co-rank_zero} combined with Theorem \ref{thm:FBI-antonio-nicholas}. To be more precise, we shall prove that there exist $\Psi \in \mathcal{C}^\infty_c(V\times W)$, with $\Psi \equiv 1$ in some neighborhood of the origin, and constants $C,\epsilon >0$, such that, for every $(x,t)$ in some open neighborhood of the origin, and every $(\xi,\eta)$ in some conic neighborhood of $(\xi_0,0)$, the following estimate holds true,
\begin{equation}\label{eq:FBI_maximal_to_prove}
	\left| \widetilde{\mathfrak{F}}[\Psi u](\widetilde{\Xi}(x,t),\widetilde{\Theta}) \right| \leq C e^{-\epsilon |\widetilde{\Theta}|^\frac{1}{s}},
\end{equation}
where $\widetilde{\Xi}(x,t) = (\widetilde{Z}(x,t), t) = (Z(x,t) - i\mathrm{d}_t\phi(0)t,t) $, $\widetilde{\Theta} = (\zeta, \tau - {}^t\widetilde{Z}_t(x,t)\zeta)$, and 
\begin{align*}
	\widetilde{\mathfrak{F}}[\Psi u](\widetilde{\Xi}(x,t),\widetilde{\Theta}) = \Big \langle (\Psi u)(x^\prime,t^\prime), &  e^{i\widetilde{\Theta} \cdot (\widetilde{\Xi}(x,t) - \widetilde{\Xi}(x^\prime,t^\prime)) - \langle \widetilde{\Theta} \rangle \langle \widetilde{\Xi}(x,t) - \widetilde{\Xi}(x^\prime,t^\prime) \rangle^2} \cdot \\
	& \cdot \Delta(\widetilde{\Xi}(x,t) - \widetilde{\Xi}(x^\prime, t^\prime), \widetilde{\Theta}) \det \widetilde{Z}_{x,t}(x^\prime,t^\prime) \Big
	\rangle_{\mathcal{D}^\prime},
\end{align*}
and we note that $\det \widetilde{Z}_{x,t} = \det \widetilde{\Xi}_{x,t}$. 

Let $\chi \in \mathcal{C}_c^\infty(V)$ be such that $0 \leq \chi \leq 1$ and $\chi \equiv 1$ in $V_1\Subset V$, let $\widetilde{V} \Subset V_1$, $\widetilde{W} \Subset W$ be an open neighborhood of the origin, let $\xi_0 \subset \mathcal{C} \subset \mathrm{R}^m \setminus 0$ be an open convex cone, and let $C,\tilde{\epsilon}>0$ such that 
\begin{equation}\label{eq:FBI-decay}
		    |\mathfrak{F}[\chi u](t; Z(x,t), \zeta)| \leq C e^{-\tilde{\epsilon} |\zeta|^{\frac{1}{s}}}, \qquad \forall t \in \widetilde{W}, x \in \widetilde{V}, \zeta \in \mathbb{R} \mathrm{T}^\prime_{\Sigma_t}\big|_{Z(x,t)}(\mathcal{C}).
	\end{equation}
By Theorem \ref{thm:FBI-inversion-formula}, we have that
\begin{equation*}
    \chi(x)u(x,t)=\lim_{\epsilon\to 0^+}\frac{1}{(2\pi^3)^\frac{m}{2}}\iint_{\mathbb{R} \mathrm{T}^\prime_{\Sigma_t} }  e^{i\zeta \cdot (Z(x,t)- z^\prime)- \langle \zeta \rangle \langle Z(x,t) - z^\prime \rangle^2 - \epsilon \langle \zeta \rangle^2} \mathfrak{F}[\chi u] (t; z^\prime, \zeta) \langle \zeta \rangle^\frac{m}{2} \mathrm{d} z^\prime \mathrm{d}\zeta,
\end{equation*}
for $(x,t) \in V_1 \times W$, where we are denoting $\Sigma_t = Z(\mathbb{R}^m, t)$. For every $\epsilon>0$ we set
\begin{equation*}
    F^\epsilon(x,t) \doteq \frac{1}{(2\pi^3)^\frac{m}{2}}\iint_{\mathbb{R}\mathrm{T}^\prime_{\Sigma_t}(\mathcal{C})}e^{i\zeta\cdot(Z(x,t)-z^\prime)-\langle\zeta\rangle\langle Z(x,t)-z^\prime\rangle^2-\epsilon\langle\zeta\rangle^2}\mathfrak{F}[\chi u](t;z^\prime,\zeta)\langle\zeta\rangle^\frac{m}{2}\mathrm{d}z^\prime \mathrm{d}\zeta,
\end{equation*}
and
\begin{equation*}
    g^\epsilon(z,t) \doteq \frac{1}{(2\pi^3)^\frac{m}{2}}\iint_{\mathbb{R}\mathrm{T}^\prime_{\Sigma_t}(\mathbb{R}^m \setminus \mathcal{C})}e^{i\zeta\cdot(z-z^\prime)-\langle\zeta\rangle\langle z -z^\prime\rangle^2-\epsilon\langle\zeta\rangle^2}\mathfrak{F}[\chi u](t;z^\prime,\zeta)\langle\zeta\rangle^\frac{m}{2}\mathrm{d}z^\prime \mathrm{d}\zeta.
\end{equation*}
We shall deal with these two functions separately. We begin with $F^\epsilon$. Let $V^\prime$ be any neighborhood of the origin compactly contained in $\widetilde{V}$.  There exists $\rho>0$ such that, for $t \in \widetilde{W}$ and $z \in Z(V^\prime,t)$, 
\begin{equation*}
        \left| e^{i\zeta\cdot(z-z^\prime)-\langle\zeta\rangle\langle z -z^\prime\rangle^2-\epsilon\langle\zeta\rangle^2}\mathfrak{F}[\chi u](t;z^\prime,\zeta)\langle\zeta\rangle^\frac{m}{2} \right| \leq 
        \text{Const}\cdot e^{-\rho|\zeta| - \kappa^\prime|\zeta||z - z^\prime |^2/2},
        \quad(z^\prime,\zeta) \in \mathbb{R}\mathrm{T}^\prime_{\Sigma_t\setminus Z(\widetilde{V},t)}(\mathcal{C}),
\end{equation*}
and
\begin{equation*}
        \left| e^{i\zeta\cdot(z-z^\prime)-\langle\zeta\rangle\langle z -z^\prime\rangle^2-\epsilon\langle\zeta\rangle^2}\mathfrak{F}[\chi u](t;z^\prime,\zeta)\langle\zeta\rangle^\frac{m}{2} \right| \leq
        C |\zeta|^\frac{m}{2} e^{-\tilde{\epsilon} |\zeta|^{\frac{1}{s}}} e^{- \kappa^\prime|\zeta||z - z^\prime|^2},
       \quad (z^\prime,\zeta) \in \mathbb{R}\mathrm{T}^\prime_{Z(\widetilde{V},t)}(\mathcal{C}),
\end{equation*}
for all $k \in \mathbb{Z}_+$.  Combining these two estimates we obtain
\begin{equation*}
        \left| e^{i \zeta \cdot (z - z^\prime) -\langle \zeta \rangle \langle z - z^\prime \rangle^2 - \epsilon \langle \zeta \rangle^2} 
        \mathfrak{F}[\chi u](t;z^\prime,\zeta) \langle \zeta \rangle^{m/2} \right| \leq
        \text{Const} |\zeta|^\frac{m}{2} e^{-\tilde{\epsilon} |\zeta|^{\frac{1}{s}}} e^{-\kappa^\prime|\zeta||z-z^\prime|^2/2},
\end{equation*}
for all $t \in \widetilde{W}$, $z \in Z(V^\prime,t)$, $(z^\prime,\zeta) \in \mathbb{R}\mathrm{T}^\prime_{\Sigma_t}(\mathcal{C})$, and $k \in \mathbb{Z}_+$. In particular, this means that the function 
\begin{equation*}
	F(x,t) = \frac{1}{(2\pi^3)^\frac{m}{2}}\iint_{\mathbb{R}\mathrm{T}^\prime_{\Sigma_t}(\mathcal{C})}e^{i\zeta\cdot(Z(x,t)-z^\prime)-\langle\zeta\rangle\langle Z(x,t)-z^\prime\rangle^2}\mathfrak{F}[\chi u](t;z^\prime,\zeta)\langle\zeta\rangle^\frac{m}{2}\mathrm{d}z^\prime \mathrm{d}\zeta
\end{equation*}
is continuos in $V^\prime \times \widetilde{W}$ by Lebesgue's dominated convergence Theorem. 
\comment{
We shall split $\mathbb{R}\mathrm{T}^\prime_{\Sigma_t}(\mathcal{C})$ in two regions:
\begin{align*}
    &Q^1_t\doteq\{(z^\prime,\zeta)\,:\, z=Z(x^\prime,t),\;\zeta={}^t Z_x(x^\prime,t)^{-1}\xi,\;|x^\prime|< \tilde{r}\;\xi\in \mathcal{C}\}\\
    &Q^2_t\doteq\{(z^\prime,\zeta)\,:\,z= Z(x^\prime,t),\;\zeta={}^t Z_x(x^\prime,t)^{-1}\xi, \;\tilde{r}\leq|x^\prime| \;\xi\in \mathcal{C} \},\\
\end{align*}
where $\tilde{r}$ stands for the radius of $\widetilde{V}$. We then set for $j = 1, 2$,
\begin{equation*}
    \mathrm{I}_j^\epsilon(z,t)\doteq \frac{1}{(2\pi^3)\frac{m}{2}}\iint_{Q^j_t}e^{i\zeta\cdot(z-z^\prime)-\langle\zeta\rangle\langle z-z^\prime\rangle^2-\epsilon\langle\zeta\rangle^2}\mathfrak{F}[\chi u](t;z^\prime,\zeta)\langle\zeta\rangle^\frac{m}{2}\mathrm{d}z^\prime \mathrm{d}\zeta,
\end{equation*}
so $v_1^\epsilon(x,t) = \mathrm{I}_1^\epsilon(Z(x,t),t) + \mathrm{I}_2^\epsilon(Z(x,t),t) $. Let us then begin with the term $\mathrm{I}^\epsilon_2(z,t)$. For $(z^\prime,\zeta)\in Q_t^2$ we have that
\begin{align*}
    \Re\{i \zeta\cdot(Z(0,t)-Z(x^\prime,t))-&\langle\zeta\rangle\langle Z(0,t)-Z(x^\prime,t)\rangle^2\}\leq\\
    &-c|\zeta||Z(0,t)-Z(x^\prime,t)|^2\\
    &= -c|\zeta|\left(|x^\prime|^2 + |\phi(0,t) - \phi(x^\prime,t)|^2 \right) \\
    &\leq - c|x^\prime|^2|\zeta|\\
    &\leq - c\tilde{r}^2 |\zeta|,
\end{align*}
in other words
\begin{equation*}
    \sup_{(z^\prime,\zeta)\in Q_t^2}\frac{ \Re\{i \zeta\cdot(Z(0,t)-z^\prime)- \langle\zeta\rangle\langle Z(0,t)-z^\prime\rangle^2\}}{|\zeta|}\leq - c\tilde{r}^2,
\end{equation*}
and this is valid for every $t\in W$. So there are $\mathcal{O}_1\subset\mathbb{C}^m$ an open neighbourhood of the origin and $W_1\Subset W$ an open neighbourhood of the origin, such that
\begin{equation*}
    \sup_{(z^\prime,\zeta)\in Q_t^2}\frac{ \Re\{i \zeta\cdot(z-z^\prime)- \langle\zeta\rangle\langle z-z^\prime\rangle^2\}}{|\zeta|}\leq -\frac{ c\tilde{r}^2}{2},\quad\forall z\in\mathcal{O}_1, t\in W_1.
\end{equation*}
Therefore we have that
\begin{equation}\label{eq:estimate-integrand-I_2}
    \left|e^{i\zeta\cdot(z-z^\prime)-\langle\zeta\rangle\langle z-z^\prime\rangle^2}\mathfrak{F}[\chi u](t;z^\prime,\zeta)\langle\zeta\rangle^\frac{m}{2}\right|\leq C(1+|\zeta|)^{k+\frac{m}{2}}e^{-\frac{c\tilde{r}}{2}|\zeta|},
\end{equation}
for some $k\geq 0$ (since $\chi u$ is a compactly supported distribution and thus have finite order) and for all $z\in \mathcal{O}_1$, $(z^\prime,\zeta)\in Q_t^2$, and $t\in W_1$. Now set 
\begin{equation*}
    \mathrm{I}_2(z,t)\doteq\iint_{Q^2_t}e^{i\zeta\cdot(z-z^\prime)-\langle\zeta\rangle\langle z-z^\prime\rangle^2}\mathfrak{F}[\chi u](t;z^\prime,\zeta)\langle\zeta\rangle^\frac{m}{2}\mathrm{d}z^\prime \mathrm{d}\zeta,
\end{equation*}
for $\epsilon>0$, $z\in\mathcal{O}_1$, and $t\in W_1$. Let $V_1\Subset V$ and $W_2\Subset W_1$ such that $\{Z(x,t)\;:\;(x,t)\in V_1\times W_2\}\subset \mathcal{O}_1$. In view of \eqref{eq:estimate-integrand-I_2} we have that $\mathrm{I}_2^\epsilon(z,t)$ is holomorphic on $\mathcal{O}_1$ for every $t \in W_2$, and it is uniformly bounded. Therefore by Montel's Theorem there exists a sequence $(\epsilon_j)_{j \in \mathbb{Z}_+}$ going to zero such that $\mathrm{I}^{\epsilon_j}_2(z,t)$ converges uniformly to $\mathrm{I}_2(z,t)$ on the compact sets of $\mathcal{O}_1$, for every $t \in W_2$, thus $\mathrm{I}_2(z,t)$ is holomorphic in $z$ for every $t \in W_2$. Now we shall analyse the term $\mathrm{I}_1^\epsilon(Z(x,t),t)$. Let $(x,t)\in B_{r}(0)\times B_{\delta}(0)$ and $\alpha\in\mathbb{Z}_+^m$. Then, following \cite{braun2022}, 
\begin{align*}
      \mathrm{M}^\alpha \mathrm{I}_1^\epsilon(Z(x,t),t) &=  \iint_{Q^1_t}\mathrm{M}^\alpha \left\{e^{i\zeta\cdot(Z(x,t)-z^\prime)-\langle\zeta\rangle\langle Z(x,t)-z^\prime\rangle^2}\right\}e^{-\epsilon\langle\zeta\rangle^2} \mathfrak{F}[\chi u](t;z^\prime,\zeta)\langle\zeta\rangle^\frac{m}{2}\mathrm{d}z^\prime \mathrm{d}\zeta\\
     &=\sum_{\beta\leq\alpha}\binom{\alpha}{\beta}\iint_{Q^1_t}\mathrm{M}^{\alpha-\beta} e^{i\zeta\cdot(Z(x,t)-z^\prime)}\mathrm{M}^\beta e^{-\langle\zeta\rangle\langle Z(x,t)-z^\prime\rangle^2} e^{-\epsilon\langle\zeta\rangle^2}\mathfrak{F}[\chi u](t;z^\prime,\zeta)\langle\zeta\rangle^\frac{m}{2}\mathrm{d}z^\prime \mathrm{d}\zeta\\
    &=\sum_{\beta\leq\alpha}\binom{\alpha}{\beta}\sum_{l^1_1+2l^1_2=\beta_1}\cdots\sum_{l^m_1+2\l^m_2=\beta_m}\frac{\beta!}{l^1_1!l^1_2!\cdots l^m_1!l^m_2!}\cdot\\
    &\cdot \iint_{Q^1_t} e^{i\zeta\cdot(Z(x,t)-z^\prime)-\langle\zeta\rangle\langle Z(x,t)-z^\prime\rangle^2-\epsilon\langle\zeta\rangle^2}\mathfrak{F}[\chi u](t;z^\prime,\zeta)\langle\zeta\rangle^\frac{m}{2}\cdot\\
    &\cdot(-\langle\zeta\rangle)^{l^1_1+l^1_2+\cdots+ l^m_1+l^m_2}(i\zeta)^{\alpha-\beta}(2(Z_1(x,t)-z^\prime_1))^{l^1_1}\cdots\\
    &\cdots(2(Z_m(x,t)-z^\prime_m))^{l^m_1}\mathrm{d}z^\prime \mathrm{d}\zeta.\\
\end{align*}
Therefore 
\begin{align*}
   \left| \mathrm{M}^\alpha \mathrm{I}_1^\epsilon(Z(x,t),t)\right|&\leq \sum_{\beta\leq\alpha}\binom{\alpha}{\beta}\sum_{l=(l^\prime,l^{\prime\prime})}\frac{\beta!}{l!}\iint_{Q^1_t}e^{-c|\zeta||Z(x,t)-z^\prime|^2}\cdot\\
   &\cdot |\zeta|^{|\alpha-\beta|+l^1_1+l^1_2+\cdots+ l^m_1+l^m_2+\frac{m}{2}}\left|\mathfrak{F}[\chi u](t;z^\prime,\zeta)\right||\mathrm{d}z^\prime \mathrm{d}\zeta|\\
   &\leq C_1^{|\alpha|+1}\sum_{\beta\leq\alpha}\binom{\alpha}{\beta}\sum_{l=(l^\prime,l^{\prime\prime})}\frac{\beta!}{l!}\iint_{Q^1_t}e^{-\tilde{\epsilon}|\zeta|^{\frac{1}{s}}}\cdot\\
   &\cdot|\zeta|^{|\alpha-\beta|+l^1_1+l^1_2+\cdots+ l^m_1+l^m_2+\frac{m}{2}}|\mathrm{d}z^\prime \mathrm{d}\zeta|\\
   &\leq  C_2^{|\alpha|+1}\sum_{\beta\leq\alpha}\binom{\alpha}{\beta}\sum_{l=(l^\prime,l^{\prime\prime})}\frac{\beta!}{l!}\frac{\alpha!^s}{\beta!^s}(l^1_1+l^1_2)!^s\cdots(l^m_1+l^m_2)!^s\\
   &\leq  C_3^{|\alpha|+1}\sum_{\beta\leq\alpha}\binom{\alpha}{\beta}\sum_{l=(l^\prime,l^{\prime\prime})}\frac{\beta!}{(l^\prime,2l^{\prime\prime})!}\frac{\alpha!^s}{\beta!^s}(l^1_1+2l^1_2)!^s(l^m_1+2l^m_2)!^s\\
   &\leq  C_4^{|\alpha|+1}\alpha!^s,
\end{align*}
Summing up we have that there exist $r,\delta>0$ (the radii of $V_1$ and $W_2$) such that 
\begin{equation*}
    \left| \mathrm{M}^\alpha v_1^\epsilon(Z(x,t),t) \right| \leq C_5^{|\alpha| + 1} \alpha!^s, \quad \forall \alpha,
\end{equation*}
for all $(x,t) \in B_r(0) \times B_\delta(0)$, where the constant $C_5>0$ is independent of $\epsilon$ and $t$, thus
\begin{equation}\label{eq:v_1_Gevrey_partial_FBI_proof_2-1}
    \sup_{(x,t) \in B_r(0) \times B_\delta(0)}\left| \mathrm{M}^\alpha v_1(Z(x,t),t) \right| \leq C_5^{|\alpha| + 1} \alpha!^s, \quad \forall \alpha.
\end{equation}
}
%
%
So let $(x, t) \in V^\prime \times \widetilde{W}$. If for every multi-indices $\alpha, \beta, \ell_1, \ell_2 \in \mathbb{Z}_+^m$ with $\beta \leq \alpha$ and $\ell_1 + 2\ell_2 = \beta$ we choose $k = |\alpha-\beta + \ell_1 + \ell_2| + \kappa$, where $\kappa$ is any integer bigger than $3m/2+1$, then, by Lemma 3.6 of \cite{braun2022} (or Fa\`a di Bruno's formula),
\begin{align*}
   \left| \mathrm{M}^\alpha F^\epsilon(x,t)\right|&\leq \text{Const} \sum_{\beta\leq\alpha}\binom{\alpha}{\beta}\sum_{\ell=(\ell^\prime,\ell^{\prime\prime})}\frac{\beta!}{\ell!}\iint_{\mathbb{R}\mathrm{T}^\prime_{\Sigma_t}(\mathcal{C})}e^{-\tilde{\epsilon}|\zeta|^{\frac{1}{s}}}e^{-\kappa^\prime|\zeta||Z(x,t)-z^\prime|^2/2} \cdot\\
   &\cdot|\zeta|^{|\alpha-\beta|+\ell^1_1+\ell^1_2+\cdots+ \ell^m_1+\ell^m_2+\frac{m}{2}}|\mathrm{d}z^\prime \mathrm{d}\zeta|\\
   &\leq  \text{Const}^{|\alpha|+1}\sum_{\beta\leq\alpha}\binom{\alpha}{\beta}\sum_{\ell=(\ell^\prime,\ell^{\prime\prime})}\frac{\beta!}{\ell!}\frac{\alpha!^s}{\beta!^s}(\ell^1_1+\ell^1_2)!^s\cdots(\ell^m_1+\ell^m_2)!^s\\
   &\leq  \text{Const}^{|\alpha|+1}\alpha!^s,
\end{align*}
where the constant $\text{Const}$ does not depend on $\epsilon$.  In conclusion, we have that the functions $\mathrm{M}^\alpha F(x,t)$ are continuous in $V^\prime \times \widetilde{W}$ for all $\alpha \in \mathbb{Z}_+^m$, and
\begin{equation}\label{eq:v_1_Gevrey_partial_FBI_proof_2-1}
    \sup_{(x,t) \in V^\prime \times \widetilde{W}}\left| \mathrm{M}^\alpha F(x,t) \right| \leq C_1^{|\alpha| + 1} \alpha!^s, \qquad \forall \alpha \in \mathbb{Z}_+^m,
\end{equation}
for some $C_1>0$. This implies (see the proof of Theorem 1.1 of \cite{braundasilva:2022}) that there exists $G \in \mathcal{C}^0(\widetilde{W};\mathcal{C}^\infty(V^\prime + i\mathbb{R}^m))$ and $C_{G} > 0$ such that
\begin{equation}
    \begin{cases}
        G(Z(x,t), t) = F(x,t);  \\
        |\bar{\partial}_z G(z,t)| \leq C_{G}^{k+1} k!^{s-1} |\Im z - \phi(\Re z, t)|^k,\quad \forall k \in \mathbb{Z}_+,  |\Im z - \phi(\Re z, t)| \ll 1.
    \end{cases}
\end{equation}
Now we shall deal with $g^\epsilon$: We begin with the following standard decomposition of $\mathbb{R}^m \setminus \mathcal{C}$,
\begin{equation*}
    \mathbb{R}^m \setminus \mathcal{C} = \cup_{j=1}^N \overline{\mathcal{C}}_j, 
\end{equation*}
where each $\mathcal{C}_j \subset \mathbb{R}^m \setminus 0$ is an acute open convex cone, satisfying
\begin{enumerate}
    \item $\mathcal{C}_j \cap \mathcal{C}_k = \empty$ for all $j\neq k$;\\
    \item for every $j = 1, \dots, N$, the set 
        \begin{equation*}
            \Gamma_j \doteq \{ v \in \mathbb{R}^m \; : \; v \cdot \xi >0, \forall \xi \in \overline{\mathcal{C}_j}, \; \text{and}\; v \cdot \xi_0 < 0 \}
        \end{equation*}
    is a nonempty acute open convex cone.
\end{enumerate}
Shrinking $\Gamma_j$ if necessary, we can assume that, for all $j$, and $(v,\xi) \in \Gamma_j \times  \mathcal{C}_j$,
\begin{equation*}
    v \cdot \xi \geq 2 c^\sharp |\xi| |v|,
\end{equation*}
for some $c^\sharp > 0$. Furthermore, there exist $V_2 \subset V^\prime$ and $W_2 \subset \widetilde{W}$ open neighborhoods of the origin such that
\begin{equation*}
   \hspace{5.5cm} \Re \zeta \cdot v \geq c^\sharp |\zeta| |v|, \eqno{t \in W_2, (v,\zeta) \in \Gamma_j \times \mathbb{R}\mathrm{T}^\prime_{\Sigma_t}|_{V_2}(\mathcal{C}_j),}
\end{equation*}
%
for every $j$. So we make the following decomposition of $g^\epsilon(z,t)$:
\begin{equation*}
g^\epsilon(z,t) = \sum_{j=1}^N g_{j}^\epsilon(z,t),
\end{equation*}
where 
\begin{equation*}
    g_{j}^\epsilon(z,t) \doteq \frac{1}{(2\pi^3)^\frac{m}{2}}\iint_{\mathbb{R}\mathrm{T}^\prime_{\Sigma_t}(\mathcal{C}_j)}e^{i\zeta\cdot(z-z^\prime)-\langle\zeta\rangle\langle z-z^\prime\rangle^2-\epsilon\langle\zeta\rangle^2}\mathfrak{F}[\chi u](t;z^\prime,\zeta)\langle\zeta\rangle^\frac{m}{2}\mathrm{d}z^\prime\wedge\mathrm{d}\zeta.
\end{equation*}
Let $V^{\prime\prime} \Subset V_2$ be an open neighborhood of the origin. We claim that, for every $t \in W_2$, $g_{j}^\epsilon(z,t)$ converges uniformly to $g_{j}(z,t)$ in $\mathcal{W}_{\delta}(Z(V^{\prime\prime},t),\Gamma_j) = \{Z(x,t) + iv \; : \;  x \in V^{\prime\prime}, v \in (\Gamma_j)_{\delta} \}$, for some $\delta > 0$, and consequently $g_j(z,t)$ is holomorphic with respect to $z$ in $\mathcal{W}_{\delta^\prime}(Z(V^{\prime\prime},t),\Gamma_j)$ for every $t\in W_2$. Since $V^{\prime\prime} \Subset V_2$, there exists $\eta > 0$ such that 
\begin{equation*}
	|Z(x,t) - Z(x^\prime,t)| > \eta,\eqno{\forall t \in W_2, x \in V^{\prime\prime}, x^\prime \notin V_2.}
\end{equation*}
Let $0 < \delta < \kappa^\prime c^\sharp /(2(\kappa^\prime + 1)) $ be such that
\begin{equation}\label{eq:condition_delta_FBI_proof}
        \frac{\delta + \sqrt{\delta^2 + \kappa^\prime(\delta^2 + \delta)}}{\kappa^\prime} < \eta.
\end{equation}
Let $j \in \{1, \dots, \ell\}$, and $v \in (\Gamma_j)_\delta$ be fixed, and let $t \in W_2$ and $z \in Z(V^{\prime\prime},t)$. For every $(z^\prime,\zeta) \in \mathbb{R}\mathrm{T}^\prime_{\Sigma_t}(\mathcal{C}_j)$ we have that
\begin{equation*}
        \left| e^{i \zeta \cdot (z + iv - z^\prime) - \langle \zeta \rangle \langle z + iv - z^\prime \rangle^2} \right| 
        \leq
        e^{-\kappa^\prime|\zeta||z - z^\prime|^2 - \Re \, v \cdot \zeta + |\zeta|(2|v||z - z^\prime| + |v|^2)}.
\end{equation*}
%
\comment{
\begin{align*}
    \Re \{ i\zeta \cdot (z - z^\prime) - \langle \zeta \rangle \langle z - z^\prime \rangle^2 \} & = \Re \{ i \zeta \cdot (Z(x,t) + iv - z^\prime) - \langle \zeta \rangle \langle Z(x,t) + iv - z^\prime \rangle^2 \}\\
    & = - \Re \zeta \cdot v + \Re \{ i \zeta \cdot (Z(x,t) - z^\prime) - \langle \zeta \rangle \langle Z(x,t) - z^\prime \rangle^2 \}\\
    & - \Re \{ \langle \zeta \rangle \big( 2i v \cdot (Z(x,t) - z^\prime) - |v|^2 \big) \}\\
    & \leq - \frac{c^\sharp}{2}|v||\zeta| - c|\zeta||Z(x,t) - z^\prime|^2 + |\zeta|\big( |v||Z(x,t) - z^\prime| + |v|^2\big)\\
    & = -|\zeta| \big( |v|(c^\sharp/2 - |v|) + c|Z(x,t) - z^\prime|(|Z(x,t) - z^\prime| - |v|) \big).
\end{align*}
}
%
Now since $|Z(x,t) - z^\prime|(\kappa^\prime|Z(x,t) - z^\prime| - 2|v|) > - |v|^2/\kappa^\prime$, and in view of the choose of $\delta$, if $(z^\prime,\zeta) \in \mathbb{R}\mathrm{T}^\prime_{Z(V_2,t)}(\mathcal{C}_j)$, then
%
%
\begin{equation*}
        \left| e^{i \zeta \cdot (z + iv - z^\prime) - \langle \zeta \rangle \langle z + iv - z^\prime \rangle^2} \right| 
        \leq
        e^{-c^\sharp|v||\zeta|/2}.
\end{equation*}
%
\comment{
\begin{equation*}
    \Re \{ i\zeta \cdot (z - z^\prime) - \langle \zeta \rangle \langle z - z^\prime \rangle^2 \} < - \frac{c^\sharp|v|}{8}|\zeta|.
\end{equation*}
}
%
In vier of \eqref{eq:condition_delta_FBI_proof}, there exists $\varepsilon^\prime > 0$ such that, if $(z^\prime,\zeta) \in \mathbb{R}\mathrm{T}^\prime_{Z(\mathbb{R}^m \setminus V_2,t)}(\mathcal{C}_j)$, then
%
\comment{
\begin{align*}
    \Re \{ i\zeta \cdot (z - z^\prime) - \langle \zeta \rangle \langle z - z^\prime \rangle^2 \}
    & = - \Re \zeta \cdot v + \Re \{ i \zeta \cdot (Z(x,t) - z^\prime) - \langle \zeta \rangle \langle Z(x,t) - z^\prime \rangle^2 \}\\
    & - \Re \{ \langle \zeta \rangle \big( 2i v \cdot (Z(x,t) - z^\prime) - |v|^2 \big) \}\\
    & \leq |v||\zeta| - c|\zeta||Z(x,t) - z^\prime|^2 + |\zeta|\big( |v||Z(x,t) - z^\prime| + |v|^2\big)\\
    & = -|\zeta| \big( c|Z(x,t) - z^\prime|^2 -|v||Z(x,t) - z^\prime| - |v|(1+|v|)) \big).
\end{align*}
}
%
\begin{align*}
	 \left| e^{i \zeta \cdot (z + iv - z^\prime) - \langle \zeta \rangle \langle z + iv - z^\prime \rangle^2} \right| & \leq e^{-\kappa^\prime|z-z^\prime|^2|\zeta| + 2\delta|z-z^\prime||\zeta| + (\delta^2 + \delta)|\zeta|} \\ 
	 & \leq e^{-\varepsilon^\prime|\zeta|}.
\end{align*}
%
\comment{
Shrinking $\delta^\prime$ we can assume that $\frac{|v| + \sqrt{|v|^2 - 4c|v|(1+|v|)}}{2c} < \frac{r^\prime}{4}$, and in view of $|x| < r^\prime / 2$, we have that 
\begin{equation*}
    \Re \{ i\zeta \cdot (z - z^\prime) - \langle \zeta \rangle \langle z - z^\prime \rangle^2 \} < - \tilde{c}|\zeta|,
\end{equation*}
for some $\tilde{c}>0$.
}
%
So, for every $t \in W_2$, $g_{j}^\epsilon(z,t)$ is uniformly bounded in $\mathcal{W}_{\delta}(Z(V^{\prime\prime},t),\Gamma_j)$, thus, by Montel's Theorem, there exists a sequence $\epsilon_\nu$ going to zero such that $g_{j}^{\epsilon_\nu}(z,t)$ converges uniformly to $g_{j}(z,t)$ in $\mathcal{W}_{\delta}(Z(V^{\prime\prime},t),\Gamma_j)$, consequently $g_j(z,t)$ is holomorphic with respect to $z$ in $\mathcal{W}_{\delta^\prime}(Z(V^{\prime\prime},t),\Gamma_j)$ for every $t\in W_2$. 
%
\comment{
o a holomorphic function on $\mathcal{W}_{t\delta^\prime}(B_{r^\prime / 2}(0),\Gamma_j)$, given by
\begin{equation*}
    g_{j}(z,t) \doteq \frac{1}{(2\pi^3)^\frac{m}{2}}\iint_{\mathbb{R}\mathrm{T}^\prime_{\Sigma_t}(\mathcal{C}_j)}e^{i\zeta\cdot(z-z^\prime)-\langle\zeta\rangle\langle z-z^\prime\rangle^2}\mathfrak{F}[\chi u](t;z^\prime,\zeta)\langle\zeta\rangle^\frac{m}{2}\mathrm{d}z^\prime \mathrm{d}\zeta.
\end{equation*}
}
%
Following the argument in the end of the proof of Theorem 1.4 of \cite{braundasilva:2022}, we obtain the following decomposition of $(\chi u)(\cdot,t)$ on $\mathcal{D}^\prime(V^{\prime\prime})$ for every $t \in W_2$:
\begin{equation*}
	\chi u(\cdot,t) = G(Z(\cdot,t), t) + \sum_{j=1}^N \mathrm{bv}\big( g_{j}(\cdot,t)\big).
\end{equation*}
Now let $\psi \in \mathcal{C}^\infty_c(W_2)$ satisfying $0 \leq \psi \leq 1$, and $\psi \equiv 1$ in some open neighborhood of the origin, and let $\widetilde{\chi} \in \mathcal{C}^\infty_c(V^{\prime\prime})$, satisfying $0 \leq \widetilde{\chi} \leq 1$, and $\widetilde{\chi} \equiv 1$ in $V_2 \Subset V^{\prime\prime}$. Then, writing $\Xi(x,t) = (\widetilde{Z}(x,t),t)$, and $\widetilde{\Theta} = (\zeta, \tau - {}^{\mathrm{t}}\widetilde{Z}_t(x,t)\zeta)$, as in section \ref{sec:co-rank_zero},
\begin{align}\label{eq:decomposition_enlarged_FBI_2_implies_1}
	&\widetilde{\mathfrak{F}}[(\widetilde{\chi} \otimes \psi) u](\Xi(x,t), \widetilde{\Theta}) = \\ \nonumber
	&= \iint_{V^{\prime\prime} \times W_2} e^{i \widetilde{\Theta} \cdot (\Xi(x,t) - \Xi(x^\prime,t^\prime))  - \langle \widetilde{\Theta} \rangle \langle \Xi(x,t) - \Xi(x^\prime,t^\prime \rangle^2}\widetilde{\chi}(x^\prime)\psi(t^\prime) G(Z(x^\prime,t^\prime), t^\prime) \cdot \\ \nonumber
	& \qquad \cdot \Delta(\widetilde{\Xi}(x,t) - \widetilde{\Xi}(x^\prime, t^\prime), \widetilde{\Theta}) \det Z_x(x^\prime, t^\prime) \mathrm{d}x^\prime \mathrm{d}t^\prime\\ \nonumber
	&+\lim_{\lambda \to 0} \iint_{V^{\prime\prime} \times W_2} e^{i \widetilde{\Theta} \cdot (\Xi(x,t) - \Xi(x^\prime,t^\prime))  - \langle \widetilde{\Theta} \rangle \langle \Xi(x,t) - \Xi(x^\prime,t^\prime \rangle^2}\widetilde{\chi}(x^\prime)\psi(t^\prime) g_{1}(Z(x^\prime,t^\prime) + i\lambda v_1,t^\prime) \cdot \\ \nonumber
	& \qquad \cdot \Delta(\widetilde{\Xi}(x,t) - \widetilde{\Xi}(x^\prime, t^\prime), \widetilde{\Theta}) \det Z_x(x^\prime, t^\prime)  \mathrm{d}x^\prime \mathrm{d}t^\prime\\ \nonumber
	&\quad \vdots\\ \nonumber
	&+\lim_{\lambda \to 0} \iint_{V^{\prime\prime} \times W_2} e^{i \widetilde{\Theta} \cdot (\Xi(x,t) - \Xi(x^\prime,t^\prime))  - \langle \widetilde{\Theta} \rangle \langle \Xi(x,t) - \Xi(x^\prime,t^\prime \rangle^2}\widetilde{\chi}(x^\prime)\psi(t^\prime) g_{N}(Z(x^\prime,t^\prime) + i\lambda v_N,t^\prime) \cdot \\ \nonumber
	& \qquad \cdot \Delta(\widetilde{\Xi}(x,t) - \widetilde{\Xi}(x^\prime, t^\prime), \widetilde{\Theta}) \det Z_x(x^\prime, t^\prime) \mathrm{d}x^\prime \mathrm{d}t^\prime,
\end{align}
where $v_j \in \Gamma_j$, for $j = 1, \dots, N$.  
Let $V^{\prime\prime\prime} \Subset V_2$ and let $0<c_0<1$. We shall prove that there exists $\tilde{\mathcal{C}} \subset \mathbb{R}^m\setminus \{0\}$ an open conic neighborhood of $\xi_0$, and constants $C,\epsilon > 0$, such that, for $(\xi,\tau) \in \{(\xi^\prime, \tau^\prime) \in \mathbb{R}^{m+n} \setminus \{0\} \; : \; |\tau| < c_0 |\xi|, \xi \in \tilde{\mathcal{C}}$, and $(x,t) \in V^{\prime\prime\prime} \times W_2$, the estimate \eqref{eq:FBI_maximal_to_prove} holds true. We shall only estimate the contribution of the first tem on the right-hand side of \eqref{eq:decomposition_enlarged_FBI_2_implies_1}, since the others can be estimated in a similar fashion following the \textit{1.} $\Rightarrow$ \textit{2.} part of the present proof. We stress the fact that the existence of $\tilde{\mathcal{C}}$ comes from this part that we shall omit here (but it follows from the same argument form the \textit{1.} $\Rightarrow$ \textit{2.}).  Set $\widetilde{G}(z,t) = G(z + i \mathrm{d}_t \phi(0)t, t)$. Then for every $(x,t) \in V^{\prime\prime} \times W_2$, we have that
\begin{align*}
    \widetilde{G}(\widetilde{Z}(x,t),t) & = G(\widetilde{Z}(x,t) + i \mathrm{d}_t \phi(0)t,t) \\
    & = G(Z(x,t) - i\mathrm{d}_t \phi(0)t + i \mathrm{d}_t \phi(0)t,t) \\
    & = G(Z(x,t), t),
\end{align*}
and we also have that, for every $k \in \mathbb{Z}_+$ and $|\Im z + \mathrm{d}_t \phi(0)t - \phi(\Re z, t)| \ll 1$,
\begin{align*}
    |\bar{\partial}_z \widetilde{G}(z, t)| & = |\bar{\partial}_z G(z + i \mathrm{d}_t \phi(0)t,t)| \\
    & \leq C_G^{k+1} k!^{s-1} |\Im z + \mathrm{d}_t \phi(0)t - \phi(\Re z, t)|^k.
\end{align*}
We then write
\begin{align}
\nonumber	& \iint_{V^{\prime\prime} \times W_2} e^{i \widetilde{\Theta} \cdot (\Xi(x,t) - \Xi(x^\prime,t^\prime))  - \langle \widetilde{\Theta} \rangle \langle \Xi(x,t) - \Xi(x^\prime,t^\prime \rangle^2}\widetilde{\chi}(x^\prime)\psi(t^\prime) G(Z(x^\prime,t^\prime), t^\prime) \cdot \\ \nonumber
	& \qquad \cdot \Delta(\widetilde{\Xi}(x,t) - \widetilde{\Xi}(x^\prime, t^\prime), \widetilde{\Theta}) \det Z_x(x^\prime, t^\prime) \mathrm{d}x^\prime \mathrm{d}t^\prime = \\
	&= \iint_{V_2 \times W_2} e^{i \widetilde{\Theta} \cdot (\Xi(x,t) - \Xi(x^\prime,t^\prime))  - \langle \widetilde{\Theta} \rangle \langle \Xi(x,t) - \Xi(x^\prime,t^\prime \rangle^2}\psi(t^\prime) \widetilde{G}(\widetilde{Z}(x^\prime,t^\prime), t^\prime) \cdot \label{eq:first_term_end_proof_2-1} \\ \nonumber
	& \qquad \cdot \Delta(\widetilde{\Xi}(x,t) - \widetilde{\Xi}(x^\prime, t^\prime), \widetilde{\Theta}) \det Z_x(x^\prime, t^\prime) \mathrm{d}x^\prime \mathrm{d}t^\prime\\ \
	&+\iint_{(V^{\prime\prime}\setminus V_2) \times W_2} e^{i \widetilde{\Theta} \cdot (\Xi(x,t) - \Xi(x^\prime,t^\prime))  - \langle \widetilde{\Theta} \rangle \langle \Xi(x,t) - \Xi(x^\prime,t^\prime \rangle^2}\widetilde{\chi}(x^\prime)\psi(t^\prime) G(Z(x^\prime,t^\prime), t^\prime) \cdot  \label{eq:second_term_end_proof_2-1} \\ \nonumber
	& \qquad \cdot \Delta(\widetilde{\Xi}(x,t) - \widetilde{\Xi}(x^\prime, t^\prime), \widetilde{\Theta}) \det Z_x(x^\prime, t^\prime) \mathrm{d}x^\prime \mathrm{d}t^\prime.
\end{align}
To estimate the absolute value of \eqref{eq:second_term_end_proof_2-1} we note that there exists $r> 0$ such that, if $(x,t) \in V^{\prime\prime\prime} \times W_2$ and $(x^\prime,t^\prime) \in (V^{\prime\prime}\setminus V_2) \times W_2$, then
\begin{align*}
   \big| e^{i \widetilde{\Theta} \cdot (\Xi(x,t) - \Xi(x^\prime,t^\prime))  - \langle \widetilde{\Theta} \rangle \langle \Xi(x,t) - \Xi(x^\prime,t^\prime \rangle^2} \big| & \leq e^{-r\tilde{\kappa}^\prime|\widetilde{\Theta}|},
\end{align*}
thus we can estimate the absolute value of \eqref{eq:second_term_end_proof_2-1} by a constant times $e^{-r\tilde{\kappa}^\prime|\widetilde{\Theta}|}$. To estimate the absolute vale of \eqref{eq:first_term_end_proof_2-1} we first use Stokes' Theorem to write (keeping in mind that $Z_x = \widetilde{Z}_x$)
\begin{align}
   \nonumber & \int_{V_2}  e^{i \widetilde{\Theta} \cdot (\Xi(x,t) - \Xi(x^\prime,t^\prime))  - \langle \widetilde{\Theta} \rangle \langle \Xi(x,t) - \Xi(x^\prime,t^\prime \rangle^2}\widetilde{G}(Z(x^\prime,t^\prime), t^\prime) \cdot \\ \nonumber
   & \qquad \cdot \Delta(\widetilde{\Xi}(x,t) - \widetilde{\Xi}(x^\prime, t^\prime), \widetilde{\Theta}) \det \widetilde{Z}_x(x^\prime, t^\prime) \mathrm{d}x^\prime = \\ 
    & = \int_{\widetilde{Z}(V_2,t^\prime) - i\lambda_0 \zeta / \langle \zeta \rangle} e^{i \widetilde{\Theta} \cdot (\Xi(x,t) - (z,t^\prime))  - \langle \widetilde{\Theta} \rangle \langle \Xi(x,t) - (z,t^\prime) \rangle^2} \widetilde{G}(z, t^\prime) \cdot \label{eq:eq:stokes_enlarged_FBI_2_implies_1_integral_1} \\ \nonumber
    & \qquad \cdot \Delta(\widetilde{\Xi}(x,t) - (z, t^\prime), \widetilde{\Theta}) \mathrm{d}z \\ 
    & - \int_{\widetilde{Z}(\partial V_2,t^\prime) - i[0,\lambda_0] \zeta / \langle \zeta \rangle} e^{i \widetilde{\Theta} \cdot (\Xi(x,t) - (z,t^\prime))  - \langle \widetilde{\Theta} \rangle \langle \Xi(x,t) - (z,t^\prime) \rangle^2} \widetilde{G}(z, t^\prime) \cdot \label{eq:eq:stokes_enlarged_FBI_2_implies_1_integral_2} \\ \nonumber
    & \qquad \cdot \Delta(\widetilde{\Xi}(x,t) - (z, t^\prime), \widetilde{\Theta}) \mathrm{d}z  \\
    & + \sum_{j=1}^m \int_{\widetilde{Z}(V_2,t^\prime) - i[0, \lambda_0] \zeta / \langle \zeta \rangle} e^{i \widetilde{\Theta} \cdot (\Xi(x,t) - (z,t^\prime))  - \langle \widetilde{\Theta} \rangle \langle \Xi(x,t) - (z,t^\prime) \rangle^2} \bar{\partial}_{z_j}\widetilde{G}(z, t^\prime) \cdot \label{eq:eq:stokes_enlarged_FBI_2_implies_1_integral_3} \\ \nonumber
    & \qquad \cdot \Delta(\widetilde{\Xi}(x,t) - (z, t^\prime), \widetilde{\Theta}) \mathrm{d}\bar{z}_j \wedge \mathrm{d}z ,
\end{align}
for some small $\lambda_0 > 0$. Reasoning as before, the absolute value of \eqref{eq:eq:stokes_enlarged_FBI_2_implies_1_integral_2} is bounded by a constant times $e^{-r\tilde{\kappa}^\prime|\widetilde{\Theta}|}$. Now writing $z = \widetilde{Z}(x^\prime, t^\prime) - i \sigma \zeta / \langle \zeta \rangle$ and $0 \leq \sigma \leq \lambda_0$,
\begin{align*}
    \Re \big\{ i \widetilde{\Theta} \cdot & (\Xi(x,t) - (z,t^\prime))  - \langle \widetilde{\Theta} \rangle \langle \Xi(x,t) - (z,t^\prime) \rangle^2 \big\} = \\
    & = \Re \big\{ i \widetilde{\Theta} \cdot (\Xi(x,t) - \Xi(x^\prime,t^\prime) + i(\sigma \zeta/\langle \zeta \rangle, 0))  - \langle \widetilde{\Theta} \rangle \langle \Xi(x,t) - \Xi(x^\prime,t^\prime) + i(\sigma \zeta/\langle \zeta \rangle, 0) \rangle^2 \big\} \\
    & = \Re \big\{ i \widetilde{\Theta} \cdot (\Xi(x,t) - \Xi(x^\prime,t^\prime))  - \langle \widetilde{\Theta} \rangle \langle \Xi(x,t) - \Xi(x^\prime,t^\prime) \rangle^2 \big\} - \sigma \Re \widetilde{\Theta} \cdot (\zeta/\langle \zeta \rangle, 0) \\
    &  - \Re\big\{  \langle \widetilde{\Theta} \rangle \big( 2i\sigma (\zeta/\langle \zeta \rangle, 0)\cdot (\Xi(x,t) - \Xi(x^\prime,t^\prime)) - \sigma^2\langle (\zeta/\langle \zeta \rangle, 0) \rangle^2\big) \big\} \\
    & \leq - \tilde{\kappa}^\prime|\widetilde{\Theta}| |\Xi(x,t) - \Xi(x^\prime,t^\prime)|^2 - \sigma \Re \langle \zeta \rangle + 2\sigma|\widetilde{\Theta}| \left|\frac{\zeta}{\langle \zeta \rangle}\right||\widetilde{Z}(x,t) - \widetilde{Z}(x^\prime,t^\prime)| + \sigma^2|\widetilde{\Theta}|,
\end{align*}
for $\Xi(x,t) = (\widetilde{Z}(x,t), t)$ and $\widetilde{\Theta} = (\zeta, \tau - {}^{\mathrm{t}}\widetilde{Z}_t(x,t)\zeta)$. In view of \eqref{eq:comparison<zeta>|zeta|} and \eqref{eq:comparison-Theta}, and assuming assume $\lambda_0 < \min\{ \big(\frac{1-\kappa^2}{1+\kappa^2}\big)^{3/2}\frac{\tilde{\kappa}^\prime}{\sqrt{1+(c_1+c_0C_1)^2}} , \sqrt{\frac{1-\kappa^2}{1+\kappa^2}}\frac{1}{2\sqrt{1+(c_1+c_0C_1)^2}}$, 
we have that, for $0 \leq \sigma \leq \lambda_0$, 
\begin{align*}
    \Re \big\{ i \widetilde{\Theta} \cdot & (\Xi(x,t) - (z,t^\prime))  - \langle \widetilde{\Theta} \rangle \langle \Xi(x,t) - (z,t^\prime) \rangle^2 \big\}  \leq \\
    & \leq  -|\widetilde{\Theta}| \bigg(|\widetilde{Z}(x,t) - \widetilde{Z}(x^\prime,t^\prime)|\bigg( \tilde{\kappa}^\prime|\widetilde{Z}(x,t) - \widetilde{Z}(x^\prime,t^\prime)| - 2 \sqrt{\frac{1+\kappa^2}{1-\kappa^2}} \sigma \bigg) + \tilde{\kappa}^\prime|t - t^\prime|^2\bigg) \\
    & - \sigma |\zeta| \bigg( \sqrt{\frac{1-\kappa^2}{1+\kappa^2}} - \sigma \sqrt{1+(c_1+c_0C_1)^2} \bigg) \\
    & \leq -|\widetilde{\Theta}| |\widetilde{Z}(x,t) - \widetilde{Z}(x^\prime,t^\prime)|\bigg( \tilde{\kappa}^\prime|\widetilde{Z}(x,t) - \widetilde{Z}(x^\prime,t^\prime)| - 2 \sqrt{\frac{1+\kappa^2}{1-\kappa^2}} \sigma \bigg)  \\
    & - \frac{\sigma |\zeta|}{2}\sqrt{\frac{1-\kappa^2}{1+\kappa^2}} \\
    & \leq |\widetilde{\Theta}|\frac{\sigma^2}{4\tilde{\kappa}^\prime}\frac{1+\kappa^2}{1-\kappa^2}  - \frac{\sigma |\zeta|}{2}\sqrt{\frac{1-\kappa^2}{1+\kappa^2}} \\
    & \leq  -\sigma |\zeta| \bigg(\frac{1}{2}\sqrt{\frac{1-\kappa^2}{1+\kappa^2}} - \sqrt{1 + (c_1 +c_0C_1)^2}\frac{\sigma}{4\tilde{\kappa}^\prime}\frac{1+\kappa^2}{1-\kappa^2}\bigg) \\
    & \leq - \frac{\sigma}{4}\sqrt{\frac{1-\kappa^2}{1+\kappa^2}} |\zeta|.
\end{align*}
Therefore the absolute value of \eqref{eq:eq:stokes_enlarged_FBI_2_implies_1_integral_1} is bounded by a constant times $e^{- \frac{\lambda_0}{4}\sqrt{\frac{1-\kappa^2}{1+\kappa^2}} |\zeta|}$. Following the arguments in the \textit{1}. $\Rightarrow$ \textit{2}. part of the proof, one can estimate the absolute value of \eqref{eq:eq:stokes_enlarged_FBI_2_implies_1_integral_3} by a constant times  $e^{-\varepsilon |\zeta|^{\frac{1}{s}}}$, for some $\varepsilon > 0$. In conclusion, there exists $\varepsilon^\prime > 0$ such that the absolute value of \eqref{eq:first_term_end_proof_2-1} by a constant times $e^{-\varepsilon^\prime |\widetilde{\Theta}|}$, in view of $|\widetilde{\Theta}|$ and $|\zeta$ being comparable. Summing up, we have proved that
\begin{align*}
    \Bigg| & \iint_{V^{\prime\prime} \times W_2} e^{i \widetilde{\Theta} \cdot (\Xi(x,t) - \Xi(x^\prime,t^\prime))  - \langle \widetilde{\Theta} \rangle \langle \Xi(x,t) - \Xi(x^\prime,t^\prime \rangle^2}\widetilde{\chi}(x^\prime)\psi(t^\prime) G(Z(x^\prime,t^\prime), t^\prime) \cdot \\ \nonumber
	& \qquad \cdot \Delta(\widetilde{\Xi}(x,t) - \widetilde{\Xi}(x^\prime, t^\prime), \widetilde{\Theta}) \det Z_x(x^\prime, t^\prime) \mathrm{d}x^\prime \mathrm{d}t^\prime  \Bigg| \leq \text{Const}. e^{-\varepsilon^\prime |\widetilde{\Theta}|^\frac{1}{s}}.
\end{align*}
\end{proof}
%
%
\subsection{A version for tube structures}
%
%
Suppose now that $\mathcal{V}$ is of tube type, \textit{i.e.} we can choose the function $\phi$ to be independent of $x$, so the real-structural bundle $\mathbb{R}\mathrm{T}^\prime_{\Sigma_t}$ can be written as $\{(Z(x,t), \xi); x \in \mathbb{R}^m, t \in W, \xi \in \mathbb{R}^m \}$. For tube structures we shall prove a "finer"-version of Theorem \eqref{thm:microlocal_Gevrey_FBI}.
\begin{defn}
Let $u \in \mathcal{C}^1 ( W ; \mathcal{E}^\prime(V) )$ and $0<\kappa\leq 1$. We define the partial $\kappa$-F.B.I. of $u$ as
\begin{equation}
	\mathfrak{F}_\kappa [u](t; z,\xi) = \left \langle u(y,t) , e^{ i \xi \cdot ( z - Z(y,t) ) - |\xi|^\kappa \langle z - Z(y,t) \rangle^2 } \Delta_\kappa ( z - Z(x,t) , \xi ) \right \rangle,
\end{equation}
for $t \in W$, $z \in \mathbb{C}^m$, and $\xi \in \mathbb{R}^m$, where the function $\Delta_\kappa(z,\xi)$ is the Jacobian of the map
\begin{equation*}
	\xi \mapsto \xi + i z |\xi|^\kappa.
\end{equation*}
\end{defn}
The partial $\kappa$-F.B.I has the same properties as the usual partial F.B.I. transform, including the inversion formula:
\begin{prop}\label{prop:FBI-kappa-inversion-formula}
	Let $u \in \mathcal{C}^1 ( W ; \mathcal{E}^\prime(V) )$ and $0<\kappa\leq 1$. Then
	\begin{equation*}
		u(x,t) = \lim_{\epsilon \to 0^+} \frac{1}{ ( 2 \pi^3 )^{\frac{m}{2}}} \iint_{\mathbb{R}^{2m}} e^{ i \xi \cdot ( Z(x,t) - Z(x^\prime,t) ) - | \xi |^\kappa \langle Z(x,t) - Z(x^\prime,t) \rangle^2 - \epsilon |\xi|^2}\mathfrak{F}_\kappa [u](t ; Z(x^\prime,t), \xi) |\xi|^{\frac{\kappa m}{2}} \mathrm{d} x^\prime \mathrm{d} \xi,
	\end{equation*}
	where the convergence takes place in $\mathcal{C}^1 ( W ; \mathcal{D}^\prime (V) )$.
\end{prop}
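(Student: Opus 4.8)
The plan is to mimic the proof of Theorem~\ref{thm:FBI-inversion-formula} (carried out in \cite{braun2022} for $\kappa=1$), tracking the exponent $\kappa$ through every estimate; for a tube structure the computation is in fact cleaner, since $Z(x,t)-Z(x^\prime,t)=x-x^\prime$ is real and $\langle x-x^\prime\rangle^2=|x-x^\prime|^2$. The structural point to keep in mind is the role of $\Delta_\kappa$: it is the Jacobian of $\xi\mapsto\xi+iw|\xi|^\kappa$, i.e.
\[
\Delta_\kappa(w,\xi)=\det\bigl(\mathrm{Id}+i\kappa|\xi|^{\kappa-2}\,w\,\xi^{\mathrm{t}}\bigr)=1+i\kappa|\xi|^{\kappa-2}(w\cdot\xi),
\]
and this is exactly the substitution that undoes the Gaussian dressing $e^{-|\xi|^\kappa\langle\,\cdot\,\rangle^2}$. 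Thus in the tube case $|\xi|^\kappa$ plays the role of the analytic symbol $\langle\zeta\rangle$, and $\Delta_\kappa$ the role of the weight $\Delta$, in the general inversion formula.

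\textbf{Step 1 (a priori bounds).} First I would check that the right-hand side makes sense. Since $u\in\mathcal{C}^1(W;\mathcal{E}^\prime(V))$, the kernel defining $\mathfrak{F}_\kappa[u]$ is a compactly supported, $\mathcal{C}^1$ family of test functions in $x$, so $\mathfrak{F}_\kappa[u](t;z,\xi)$ is entire in $z$, of class $\mathcal{C}^1$ in $t$, and the argument of Proposition~IX.2.1 of \cite{trevesbook}, with $\langle\zeta\rangle$ replaced by $|\xi|^\kappa$, gives $C,N>0$ with
\[
|\mathfrak{F}_\kappa[u](t;Z(x,t),\xi)|+\sum_{j}|\partial_{t_j}\mathfrak{F}_\kappa[u](t;Z(x,t),\xi)|\le C(1+|\xi|)^N,\qquad (x,t)\in V\times W,\ \xi\in\mathbb{R}^m,
\]
together with a bound $Ce^{R|\xi|}$ when $z$ ranges over a complex neighbourhood of $\Sigma_t$. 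For each fixed $\epsilon>0$ the factor $e^{-\epsilon|\xi|^2}$ then makes the double integral in Proposition~\ref{prop:FBI-kappa-inversion-formula} absolutely convergent, uniformly for $t$ in compact subsets of $W$, and allows one $t$-derivative under the integral sign.

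\textbf{Step 2 (the Gaussian reduction and the contour shift).} Next I would insert the definition of $\mathfrak{F}_\kappa[u]$, move the pairing $\langle u(y,t),\cdot\rangle$ outside the $(x^\prime,\xi)$-integral (legitimate by Step~1 and compact support), and integrate in $x^\prime$ first. Completing the square in $x^\prime$ and using the Gaussian moment identity, the whole integrand becomes
\[
\frac{1}{(2\pi)^m}\,e^{i\xi\cdot(x-y)-\frac12|\xi|^\kappa|x-y|^2-\epsilon|\xi|^2}\,\Delta_\kappa\bigl(\tfrac{x-y}{2},\xi\bigr),
\]
where the weight $|\xi|^{\kappa m/2}$ has been absorbed, $(2\pi^3)^{-m/2}(\pi/2)^{m/2}=(2\pi)^{-m}$, and --- this is where $\Delta_\kappa$ earns its keep --- the Gaussian mean in $x^\prime$ of the factor $\Delta_\kappa(x^\prime-y,\xi)$ built into $\mathfrak{F}_\kappa[u]$ reproduces precisely $\Delta_\kappa(\tfrac{x-y}{2},\xi)$. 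Writing $h=x-y$ and substituting $\eta=\xi+\tfrac i2 h|\xi|^\kappa$, one has $i\eta\cdot h=i\xi\cdot h-\tfrac12|\xi|^\kappa|h|^2$ and $\mathrm{d}\eta=\Delta_\kappa(\tfrac h2,\xi)\,\mathrm{d}\xi$, so the $\xi$-integral is realized as the integral of the entire function $e^{i\eta\cdot h}$ (carrying the harmless factor $e^{-\epsilon|\xi|^2}$) over the contour $\Gamma_h=\{\xi+\tfrac i2 h|\xi|^\kappa:\xi\in\mathbb{R}^m\}$. Since $|e^{i\eta\cdot h}|$ decays like $e^{-\frac12|h|^2|\xi|^\kappa}$ along the homotopy $\Gamma_h^{(s)}=\{\xi+\tfrac{is}{2}h|\xi|^\kappa\}$, $s\in[0,1]$, Cauchy's theorem (with $\epsilon>0$ fixed) slides $\Gamma_h$ back to $\mathbb{R}^m$, and letting $\epsilon\to0^+$ turns the $\xi$-integral into $(2\pi)^m\delta_0$ in $\mathcal{D}^\prime$; pairing with $u(\cdot,t)$ yields $u(x,t)$. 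Since all constants above are uniform in $t$ and the same estimates hold for $\partial_{t_j}$ of the integrand (using $\partial_{t_j}\mathfrak{F}_\kappa[u]=\mathfrak{F}_\kappa[\mathrm{L}_j u]$ and the smoothness of $\phi$), the convergence takes place in $\mathcal{C}^1(W;\mathcal{D}^\prime(V))$.

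\textbf{Main obstacle.} The delicate part, exactly as for $\kappa=1$, is the rigorous justification of the contour deformation, because $|\xi|^\kappa$ is not smooth at the origin: I would first excise $\{|\xi|\le1\}$ --- where the integrand is bounded, the deformation is unnecessary, and the resulting low-frequency piece is a smooth remainder treated directly --- and run the homotopy only for $|\xi|\ge1$ and $|h|$ small, where $\nabla(|\xi|^\kappa)$ and its derivatives are $O(|\xi|^{\kappa-1})$ so that $\Gamma_h^{(s)}$ is an admissible contour; one then checks, uniformly in $\epsilon\in(0,1]$ and in $t$, that the deformation error and the low-frequency remainder both tend to $0$ in $\mathcal{D}^\prime$. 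None of these estimates differs in substance from \cite{braun2022}; only $\kappa$ must be carried along, and the $t$-dependence is innocuous because for a tube structure it enters solely through the smooth function $\phi(t)$.
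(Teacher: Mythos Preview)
Your proposal is correct and takes essentially the same approach as the paper: the paper does not give a proof of Proposition~\ref{prop:FBI-kappa-inversion-formula} at all, stating only that ``the partial $\kappa$-F.B.I.\ has the same properties as the usual partial F.B.I.\ transform, including the inversion formula,'' i.e.\ it defers entirely to the method of Theorem~\ref{thm:FBI-inversion-formula} in \cite{braun2022}. Your sketch carries out exactly that program --- the Gaussian reduction in $x'$ (which is cleaner in the tube case since $Z(x,t)-Z(x',t)=x-x'$ is real), the identification of $\Delta_\kappa$ as the Jacobian implementing the contour shift $\eta=\xi+\tfrac{i}{2}h|\xi|^\kappa$, and the handling of the singularity of $|\xi|^\kappa$ at the origin --- and your constant bookkeeping $(2\pi^3)^{-m/2}(\pi/2)^{m/2}=(2\pi)^{-m}$ checks out.
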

We note that in the tube case, the exponential in the F.B.I. transform can be expressed as
\begin{equation*}
	e^{ i \xi \cdot ( Z(x,t) - Z(x^\prime,t) ) - | \xi |^\kappa \langle Z(x,t) - Z(x^\prime,t) \rangle^2 } = e^{i \xi \cdot (x - x^\prime) - |\xi|^\kappa | x - x^\prime|^2}.
\end{equation*}
Since the "oscillatory" part of the exponential is now purely imaginary (\textit{i.e.} is in fact an oscillatory term), we can apply the same steps as in the proof of Theorem \ref{thm:microlocal_Gevrey_FBI}, to prove the following Theorems:
\begin{thm}\label{thm:microlocal_Gevrey_FBI_tube}
	Let $s > 1$ and $s^{-1} \leq \kappa \leq 1$. Let $u \in \mathcal{C}^1 (W; \mathcal{D}^\prime (V))$ and $\xi_0 \in \mathbb{R}^m \setminus 0$. Then the following are equivalent:

	\begin{enumerate}
		\item[\textit{1.}]  $(0,0,\xi_0,0) \notin \mathrm{WF}_s(u)$;\\
	 	\item[\textit{2.}] For every $\chi \in \mathcal{C}_c^\infty(V)$, with $0\leq \chi \leq 1$ and $\chi \equiv 1$ in some open neighborhood of the origin, there exist $\widetilde{V} \subset V$, $\widetilde{W} \subset W$, open balls centered at the origin, an open cone $\Gamma \subset \mathbb{R}^m \setminus 0$ containing $\xi_0$, and constants $C,\epsilon > 0$ such that
	\begin{equation}\label{eq:FBI-decay-gevrey-tube}
		|\mathfrak{F}_\kappa [\chi u](t; Z(x,t), \xi)| \leq C e^{-\epsilon |\zeta|^{\frac{1}{s}}}, \qquad \forall t \in \widetilde{W}, x \in \widetilde{V}, \xi \in \Gamma.
	\end{equation}
\end{enumerate}
\end{thm}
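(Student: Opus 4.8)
The plan is to mimic, step by step, the proof of Theorem \ref{thm:microlocal_Gevrey_FBI}, exploiting the fact that in the tube case the exponential weight $e^{i\xi\cdot(Z(x,t)-Z(x',t))-|\xi|^\kappa\langle Z(x,t)-Z(x',t)\rangle^2}$ equals $e^{i\xi\cdot(x-x')-|\xi|^\kappa|x-x'|^2}$, so the phase $\xi\cdot(x-x')$ is genuinely real-oscillatory and the Gaussian damping now decays like $e^{-c|\xi|^\kappa|x-x'|^2}$ rather than $e^{-c|\xi||x-x'|^2}$. For the implication \textit{1.}~$\Rightarrow$~\textit{2.}, I would again write $u$ locally as the boundary value of a Gevrey-$s$ almost analytic function $f$ on a wedge $\mathcal{W}_\delta(V_1\times W_1,\Gamma)$ with $(\xi_0,0)\cdot\Gamma<0$, choose a cutoff $\chi$, fix $v\in\Gamma$, and deform the contour in the $x'$-variable into $x'+i\sigma v'$ using Stokes' theorem exactly as before, producing the same four boundary/volume integrals \textbf{(I)}--\textbf{(IV)}. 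The key pointwise bound, replacing \eqref{eq:estimate_exponential_deformation_FBI_decay_caracterization}, is
\begin{equation*}
	\bigl| e^{i\xi\cdot(x-x'-i\gamma v')-|\xi|^\kappa|x-x'-i\gamma v'|^2} \bigr| \leq e^{-c|\xi|^\kappa|x-x'|^2 - \frac{c^\sharp\gamma}{4}|\xi|},
\end{equation*}
valid for $\gamma$ small after possibly shrinking the neighborhoods and the cone $\mathcal{C}\ni\xi_0$ so that $v\cdot\xi\leq -c^\sharp|\xi|$ on $\mathcal{C}$; note that the linear-in-$\gamma$ gain $e^{-c^\sharp\gamma|\xi|/4}$ still comes with a full power of $|\xi|$, which is all that is needed. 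Integrals \textbf{(I)} and \textbf{(III)} are bounded by $e^{-c\rho|\xi|^\kappa}$, integral \textbf{(II)} by $e^{-\sigma c^\sharp|\xi|/4}$, and for \textbf{(IV)} one inserts the estimate $|\bar\partial F_{t,\lambda}|\leq C_3^{k+1}k!^{s-1}(\gamma+\lambda)^k$ and integrates in $\gamma$, getting a bound $\mathrm{Const}\cdot C_3^{k+1}(\delta'+4/c^\sharp)^k|\xi|^{-k}k!^s$ for every $k$; optimizing over $k$ gives $Ce^{-\epsilon|\xi|^{1/\kappa s}}$, hence a fortiori $Ce^{-\epsilon|\xi|^{1/s}}$ since $\kappa\le 1$. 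The constraint $\kappa\geq 1/s$ guarantees $1/(\kappa s)\le 1$ so the optimization is not vacuous, while the right-hand side of \eqref{eq:FBI-decay-gevrey-tube} only asks for the weaker exponent $1/s$, so no sharpness is lost.

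For the implication \textit{2.}~$\Rightarrow$~\textit{1.}, I would repeat the reconstruction scheme: use the inversion formula of Proposition \ref{prop:FBI-kappa-inversion-formula} to write $\chi u = F^\epsilon + g^\epsilon$, where $F^\epsilon$ integrates over the cone $\mathcal{C}$ where \eqref{eq:FBI-decay-gevrey-tube} holds and $g^\epsilon$ over $\mathbb{R}^m\setminus\mathcal{C}$. On the good cone the Gaussian factor $e^{-|\xi|^\kappa|x-x'|^2}$ together with the superpolynomial decay $e^{-\tilde\epsilon|\xi|^{1/s}}$ lets one differentiate under the integral sign and obtain, via Lemma 3.6 of \cite{braun2022} / Fa\`a di Bruno, the Gevrey bounds $|\mathrm{M}^\alpha F(x,t)|\leq C_1^{|\alpha|+1}\alpha!^s$ uniformly in $\epsilon$; by the result quoted from \cite{braundasilva:2022} this produces a Gevrey-$s$ almost analytic extension $G$ with $G(Z(x,t),t)=F(x,t)$. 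On the complementary cone one does the standard acute-cone decomposition $\mathbb{R}^m\setminus\mathcal{C}=\cup_j\overline{\mathcal{C}_j}$, picks $v_j\in\Gamma_j$ with $v_j\cdot\xi\ge 2c^\sharp|\xi|$ on $\mathcal{C}_j$, and shows $g_j^\epsilon$ converges (through a subsequence, by Montel) to a function $g_j$ holomorphic in the wedge $\mathcal{W}_\delta(Z(V'',t),\Gamma_j)$; here the damping estimate for $|e^{i\xi\cdot(x+iv-x')-|\xi|^\kappa|x+iv-x'|^2}|$ needs the analogue of the inequality $|x-x'|(\kappa'|x-x'|-2|v|)>-|v|^2/\kappa'$, but with $|\xi|^\kappa$ in place of $\langle\zeta\rangle$ one must be slightly more careful: the cross term $2|v||x-x'||\xi|^\kappa$ is dominated by $|\xi|^\kappa|x-x'|^2 + |v|^2|\xi|^\kappa$, and on the far region $|x-x'|>\eta$ one still wins, while the linear gain $-\Re\,v\cdot\xi\le -c^\sharp|v||\xi|$ has a full power of $|\xi|$ that always beats $|\xi|^\kappa$ for large $|\xi|$. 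Assembling the decomposition $\chi u(\cdot,t)=G(Z(\cdot,t),t)+\sum_j\mathrm{bv}(g_j(\cdot,t))$ and feeding it into the enlarged corank-zero F.B.I. transform $\widetilde{\mathfrak{F}}$ exactly as in \eqref{eq:decomposition_enlarged_FBI_2_implies_1}, one concludes $(0,0,\xi_0,0)\notin\mathrm{WF}_s(u)$ via Theorem \ref{thm:FBI-antonio-nicholas}.

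The main obstacle, and the only place where real care beyond transcription is required, is bookkeeping the replacement of the factor $\langle\zeta\rangle\sim|\zeta|$ by $|\xi|^\kappa$ throughout the wedge/deformation estimates: every inequality that previously balanced a linear term $c|\zeta|$ against quadratic and cross terms of the same homogeneity must now be re-examined, since the quadratic damping is only of order $|\xi|^\kappa$ while the contour-gain terms from $e^{-\Re\,v\cdot\xi\,\sigma}$ remain of order $|\xi|$. In every instance the linear term dominates for $|\xi|$ large (which is all that matters for wave-front-set membership), so the estimates go through, but the explicit smallness conditions on $\delta$, $\lambda_0$, $\sigma$ (analogues of \eqref{eq:condition_delta_FBI_proof} and the bound on $\lambda_0$) must be restated with $|\xi|^\kappa$ inserted in the appropriate places. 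I would organize the write-up so that these modified conditions are stated once at the outset and then the four-integral estimates and the $g_j$ convergence argument are carried out verbatim, pointing to the proof of Theorem \ref{thm:microlocal_Gevrey_FBI} for the steps that are unchanged.
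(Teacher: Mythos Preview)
Your proposal is correct and follows exactly the route the paper takes: the paper itself gives no separate proof of Theorem \ref{thm:microlocal_Gevrey_FBI_tube}, merely remarking that in the tube case the oscillatory part of the exponential is purely imaginary and hence ``we can apply the same steps as in the proof of Theorem \ref{thm:microlocal_Gevrey_FBI}''. One small correction: the constraint $\kappa\ge 1/s$ is not needed to make the optimization in \textbf{(IV)} non-vacuous (that integral still yields $C^{k+1}k!^s|\xi|^{-k}$, hence $e^{-\epsilon|\xi|^{1/s}}$ directly, since the linear gain $e^{-c^\sharp\gamma|\xi|/4}$ carries a full power of $|\xi|$); rather, $\kappa\ge 1/s$ is exactly what ensures that the ``far-away'' bounds $e^{-c\rho|\xi|^\kappa}$ coming from \textbf{(I)}, \textbf{(III)}, and the analogous cut-off terms in \textit{2.}~$\Rightarrow$~\textit{1.} are at least as strong as the target $e^{-\epsilon|\xi|^{1/s}}$.
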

\begin{thm}\label{thm:microlocal_smooth_FBI_tube}
	Let $0 < \kappa \leq 1$, $u \in \mathcal{C}^1 (W; \mathcal{D}^\prime (V))$ and $\xi_0 \in \mathbb{R}^m \setminus 0$. Then the following are equivalent:
	\begin{enumerate}
		\item[\textit{1.}]  $(0,0,\xi_0,0) \notin \mathrm{WF} (u)$;\\
	 	\item[\textit{2.}] For every $\chi \in \mathcal{C}_c^\infty(V)$, with $0\leq \chi \leq 1$ and $\chi \equiv 1$ in some open neighborhood of the origin, there exist $\widetilde{V} \subset V$, $\widetilde{W} \subset W$, open balls centered at the origin, an open cone $\Gamma \subset \mathbb{R}^m \setminus 0$ containing $\xi_0$, such that for every $N > 0$ exists $C_N > 0$ satisfying
    	\begin{equation*}
		    |\mathfrak{F}_\kappa[\chi u](t; Z(x,t), \xi)| \leq \frac{C_N}{(1 + |\zeta|)^N}, \qquad \forall t \in \widetilde{W}, x \in \widetilde{V}, \xi \in \Gamma.
	\end{equation*}
\end{enumerate}
\end{thm}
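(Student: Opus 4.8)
The plan is to adapt the proof of Theorem \ref{thm:microlocal_Gevrey_FBI} (and its tube counterpart, Theorem \ref{thm:microlocal_Gevrey_FBI_tube}), replacing the exponential Gevrey bounds $e^{-\epsilon|\zeta|^{1/s}}$ by polynomial bounds $C_N(1+|\zeta|)^{-N}$ throughout, and using the smooth wave front set characterization in place of the Gevrey one. Concretely, one reduces to the co-rank zero situation of section \ref{sec:co-rank_zero}, and invokes the smooth analogue of Theorem \ref{thm:FBI-antonio-nicholas} — that is, Proposition IX.2.1 of \cite{trevesbook} and the standard characterization of $\mathrm{WF}(u)$ via rapid decay of the usual F.B.I.\ transform on the real structure bundle. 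I would not reprove the co-rank zero result but simply cite it, since the whole point of section \ref{sec:co-rank_zero} is to make this reduction available for both regularity classes simultaneously.

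For the implication \textit{1.} $\Rightarrow$ \textit{2.}, I would assume $(0,0,\xi_0,0)\notin\mathrm{WF}(u)$, so that near the origin $u$ is a finite sum of boundary values of slowly growing holomorphic functions $f$ (no almost-analytic extension is needed in the smooth case — genuine holomorphy in a wedge suffices). Fixing $v\in\Gamma\cap\mathbb{S}^{n+m-1}$ with $v\cdot(\xi_0,0)<0$, I would deform the contour in the defining integral of $\mathfrak{F}_\kappa[\chi u]$ exactly as in the Gevrey proof, using that in the tube case the phase $i\xi\cdot(x-x') - |\xi|^\kappa|x-x'|^2$ has purely imaginary oscillatory part, so the wedge deformation $x'\mapsto x'+i\sigma v'$ is legitimate. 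The contour pieces away from the diagonal and the fully-deformed piece at $\sigma=\lambda_0$ both yield genuine exponential decay $e^{-c|\xi|}$, which is much stronger than any polynomial bound; the remaining integral involving $\bar\partial f$ vanishes identically since $f$ is holomorphic (here, pleasantly, there is no $\bar\partial$ term at all, which is the one place the smooth proof is strictly \emph{simpler} than the Gevrey one). Hence all contributions are $O(e^{-c|\xi|}) = O_N((1+|\xi|)^{-N})$, giving \eqref{eq:FBI-decay-gevrey-tube} with polynomial decay.

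For \textit{2.} $\Rightarrow$ \textit{1.}, I would run the inversion formula of Proposition \ref{prop:FBI-kappa-inversion-formula}, splitting $\mathbb{R}^m = \mathcal{C}\sqcup(\mathbb{R}^m\setminus\mathcal{C})$ and decomposing $\mathbb{R}^m\setminus\mathcal{C}$ into finitely many acute cones $\mathcal{C}_j$ as in the Gevrey proof. The $\mathcal{C}$-part: using the hypothesized polynomial decay $|\mathfrak{F}_\kappa[\chi u]|\le C_N(1+|\xi|)^{-N}$, differentiating under the integral sign with the vector fields $\mathrm{M}_k$ and estimating as in the bound for $\mathrm{M}^\alpha F^\epsilon$ in the Gevrey proof — but now, instead of extracting $\alpha!^s$, I choose $N$ large enough at each stage to absorb the polynomial factors $|\xi|^{|\alpha-\beta|+\dots}$ and obtain $|\mathrm{M}^\alpha F(x,t)|\le C_\alpha$ for every $\alpha$, i.e.\ $F\in\mathcal{C}^\infty$ (equivalently, by the $n=0$ version of Theorem 1.1 of \cite{braundasilva:2022} with $s$ replaced by smoothness, $F$ extends to a function smooth up to $\Sigma_t$ whose $\bar\partial$ vanishes to infinite order). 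The $\mathcal{C}_j$-parts yield, via Montel as before, boundary values of functions $g_j$ holomorphic in wedges $\mathcal{W}_\delta(Z(V'',t),\Gamma_j)$ with $\Gamma_j\cdot\xi_0<0$. This produces the decomposition $\chi u(\cdot,t) = G(Z(\cdot,t),t) + \sum_j \mathrm{bv}(g_j(\cdot,t))$ with $G$ smooth up to the boundary and each $g_j$ holomorphic in a wedge directed so that $\xi_0$ is excluded; by the standard microlocal characterization (the smooth analogue of Theorem \ref{thm:importance_real_structure_holomorphic}, or directly the trace argument of section \ref{sec:co-rank_zero}) this forces $(0,0,\xi_0,0)\notin\mathrm{WF}(u)$.

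The main obstacle — such as it is — is bookkeeping: one must verify that the constants produced in the \textit{1.} $\Rightarrow$ \textit{2.} contour deformation are uniform in the regularizing parameter $\lambda$ (as in the Gevrey case) and that in \textit{2.} $\Rightarrow$ \textit{1.} the choice of $N$ can be made to depend on $\alpha$ in a way compatible with differentiating under the integral; both are routine given the detailed Gevrey argument already carried out. No genuinely new idea is required, which is precisely why the authors say "we can apply the same steps as in the proof of Theorem \ref{thm:microlocal_Gevrey_FBI}."
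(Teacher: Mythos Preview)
Your proposal contains a genuine conceptual error in the \textit{1.} $\Rightarrow$ \textit{2.} direction. You claim that if $(0,0,\xi_0,0)\notin\mathrm{WF}(u)$ then near the origin $u$ is a finite sum of boundary values of \emph{holomorphic} functions on wedges with $(\xi_0,0)\cdot\widetilde{\Gamma}<0$, and hence that the $\bar\partial$ term in Stokes' theorem vanishes identically. This is false: such a boundary-value decomposition with genuinely holomorphic $f$ characterizes the \emph{analytic} wave front set $\mathrm{WF}_a$, not the smooth one. A distribution can perfectly well satisfy $(0,0,\xi_0,0)\notin\mathrm{WF}(u)$ while $(0,0,\xi_0,0)\in\mathrm{WF}_a(u)$, and then no such holomorphic decomposition is available.

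The correct smooth analogue of the Gevrey almost-analytic extension used in the paper is an almost-analytic extension $f$ with $\bar\partial f$ vanishing to \emph{infinite order} on the real set, i.e.\ $|(\bar\partial_z+\bar\partial_\tau)f| \le C_k\,|\Im(z,\tau)|^k$ for every $k\in\mathbb{Z}_+$, with no constraint on the growth of $C_k$. The term \textbf{(IV)} in the Stokes argument then does \emph{not} vanish; instead, for each $N$ one takes $k=N$ in the flatness estimate and obtains $|\textbf{(IV)}|\le C_N'|\xi|^{-N}$, exactly parallel to the Gevrey computation with the factor $k!^{s-1}$ replaced by an unconstrained $C_k$. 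So the smooth proof is not simpler than the Gevrey one in the way you suggest; the $\bar\partial$ term is still the heart of the matter and only the bookkeeping of constants changes. Your treatment of \textit{2.} $\Rightarrow$ \textit{1.} is essentially correct---and indeed there you \emph{do} invoke infinite-order flatness of $\bar\partial G$---so the inconsistency between the two halves of your proposal should have been a warning sign.
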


%

\section{Microlocal hypoellipticity}\label{sec:hypoelliptic}

%
In this section we shall prove the main results of our work. Let $\mathcal{V}$ be a real-analytic locally integrable structure of tube type of rank $n$ on $\Omega \subset \mathbb{R}^{n+m}$, an open neighborhood of the origin. As in section \ref{sec:preliminaries}, we shall fix open neighborhoods of the origin $V \subset \mathbb{R}^n$ and $W \subset \mathbb{R}^m$, such that $\mathcal{V}$ is given on $V \times W$ by 
\begin{equation}\label{eq:L_j_tubo}
	\mathrm{L}_j = \frac{\partial}{\partial t_j} - i\sum_{k=1}^m \frac{\partial \phi_j}{\partial x_k} \frac{\partial}{\partial x_k}, \qquad j = 1, \dots, n,
\end{equation}
where the function $\phi : W \longrightarrow \mathbb{R}^m$ is real-analytic, and satisfies $\phi(0) = 0$. For every $u \in \mathcal{D}^\prime(V \times W)$, we define $\mathbb{L} u$ by
\begin{equation*}
	\mathbb{L}u = \sum_{j=1}^n \mathrm{L}_j u \mathrm{d} t_j.
\end{equation*}
Now we state the main definition for our result:
\begin{defn}\label{def:star}
Let $\xi_0 \in \mathbb{S}^{m-1}$. We say that $\phi$ satisfies condition $(\star)$ at $\xi_0$ if there exist  $W_0 \subset W$ an open neighborhood of the origin, $\Gamma \subset \mathbb{R}^m \setminus 0$ an open cone containing $\xi_0$, and constants $C_L>0$ and $\frac{1}{2} \leq \theta < 1$, such that %
\begin{itemize}
	\item For every $\xi \in \Gamma$ the map $ t \mapsto \Phi(t) \cdot \xi$ is open on $W_0$;
	\item There exist constants $C_L>0$ and $1/2 \leq \theta <1$ such that for every $t \in W_0$ and $\xi \in \Gamma \cap \mathbb{S}^{m-1}$,
		\begin{equation}\label{eq:lojasiewicz}
			|\phi(t) \cdot \xi |^\theta \leq C_L \| {}^{\mathrm{t}} \mathrm{d} \phi(t) \xi \|.\tag{LC}
		\end{equation}
\end{itemize}
\end{defn}
It is easy to see that the characteristic set of $\mathcal{V}$ on $V \times W$ is given by
\begin{equation*}
	\mathrm{Char}(\mathcal{V}) = \{ (x,t,\xi,\eta) \in \Omega \times (\mathbb{R}^{m+n} \setminus 0) \; : \; \eta = 0,\; {}^{\mathrm{t}} \mathrm{d} \Phi(t)\xi = 0\}. 
\end{equation*}
Now for each $\xi \in \mathbb{R}^m$ we set 
\begin{equation}\label{eq:defn_sigma_xi}
	\Sigma_\xi \doteq \{ t \in W \; : \; {}^{\mathrm{t}} \mathrm{d} \Phi(t) \cdot \xi = 0 \}.
\end{equation}
\begin{prop}\label{prop:curves}
Let $\xi_0 \in \mathbb{S}^{m-1}$. Suppose that $\phi$ satisfies $(\star)$ at $\xi_0$, and let $W_0 \subset W$, and $\Gamma \subset \mathbb{R}^m \setminus 0$ as in the definition above.  
Then there is a familie of continuous, piecewise real-analytic curves $\gamma_{\xi,t} : [0, \delta_0(\xi,t)] \longrightarrow \overline{W}$, where $\xi \in \gamma\cap\mathbb{S}^{m-1}$, $t\in W_0\setminus \Sigma_\xi$, such that $\gamma_{t,\xi}(\delta_0(\xi,t)) \in \partial W_0$, 
satisfying the following properties:
 \begin{enumerate}
	\item For every $W_1 \Subset W_0$ open neighborhood of the origin, there exists $\delta > 0$ such that $\delta_0(\xi,t)\geq \delta$, for every $\xi \in \Gamma \cap \mathbb{S}^{m-1}$, and $ t \in W_1\setminus \Sigma_{\xi}$;\\
	\item The length of the curves $\gamma_{\xi,t}$ is uniformly bounded for every $\xi \in \Gamma \cap \mathbb{S}^{m-1}$, and $ t \in W_0 \setminus \Sigma_{\xi}$; \\
     	\item If $\xi \in \Gamma \cap \mathbb{S}^{m-1}$, and $ 0 \leq \tau \leq \delta_0(\xi,t)$, then 
     	\begin{equation*}
     		\Phi(t) \cdot \xi - \Phi(\gamma_{\xi,t}(\tau)) \cdot \xi \geq \frac{1}{2} \big( (1-\theta) (8C_L)^{-1} \big)^{\frac{1}{1-\theta}} \tau^{\frac{1}{1-\theta}}
     	\end{equation*}
\end{enumerate} 
\end{prop}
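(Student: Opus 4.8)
The plan is to take $\gamma_{\xi,t}$ to be the unit-speed trajectory of the negative gradient flow of $f_\xi\doteq\phi(\cdot)\cdot\xi$; this makes the quantitative descent in (3) a routine integration of the inequality \eqref{eq:lojasiewicz}. Fix $\xi\in\Gamma\cap\mathbb S^{m-1}$ and write $f=f_\xi$, a real-analytic function on $W_0$ with $\nabla f(t)={}^{\mathrm t}\mathrm d\phi(t)\xi$, so that \eqref{eq:lojasiewicz} reads $|\nabla f(t)|\ge C_L^{-1}|f(t)|^{\theta}$ on $W_0$; in particular every critical point $t^\ast\in W_0$ of $f$ has $f(t^\ast)=0$. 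Since $W_0$ is connected and $\phi$ is real-analytic, $\Sigma_\xi=\{\nabla f=0\}$ is a proper real-analytic subset of $W_0$ — it cannot be all of $W_0$, as then $f$ would be constant, contradicting the openness in $(\star)$ — and openness of $f$ also forbids local minima, so $\{f<0\}$ accumulates at every point of $\Sigma_\xi\cap\{f=0\}$.

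For $t\in W_0\setminus\Sigma_\xi$ I would solve $\dot\gamma=-\nabla f(\gamma)/|\nabla f(\gamma)|$, $\gamma(0)=t$, a $C^{\omega}$ unit-speed curve as long as it stays in $W_0\setminus\Sigma_\xi$, along which $\tfrac{\mathrm d}{\mathrm d\tau}f(\gamma(\tau))=-|\nabla f(\gamma(\tau))|\le-C_L^{-1}|f(\gamma(\tau))|^{\theta}<0$. If $f(t)\le 0$, then $f(\gamma)$ stays strictly negative and the trajectory never meets $\Sigma_\xi$; if $f(t)>0$, the only way it can meet $\Sigma_\xi$ is to converge — in finite arclength $\tau_0\le C_Lf(t)^{1-\theta}/(1-\theta)$, by integrating the displayed inequality — as $f(\gamma)\downarrow 0$ to a point $t^\ast$, which must then be a critical point with $f(t^\ast)=0$. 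If $t^\ast\in\partial W_0$ we stop; if $t^\ast\in W_0$, then since $t^\ast$ is not a local minimum the curve selection lemma (applied to the semianalytic set $\{f<0\}$) provides a real-analytic arc inside $\overline{\{f\le 0\}}$ from $t^\ast$ to a nearby point $t'$ with $f(t')<0$, and restarting the flow at $t'$ the value of $f$ stays strictly negative thereafter, so no further meeting with $\Sigma_\xi$ occurs. The resulting concatenation — at most two gradient pieces and one short analytic bridge — is continuous and piecewise real-analytic; I reparametrize it by arclength on $[0,\delta_0(\xi,t)]$, with $\delta_0(\xi,t)$ the first parameter at which it reaches $\partial W_0$. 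That it does reach $\partial W_0$ at a finite parameter follows because on $\{f<0\}$ the same integration gives $|f(\gamma(\tau))|\ge\big((1-\theta)C_L^{-1}(\tau-\tau_0)\big)^{1/(1-\theta)}$, which is unbounded in $\tau$, whereas $|f|\le\Lambda\doteq 2\sup_{\overline{W_0}}|\phi|$ on $\overline{W_0}$, so $\gamma$ cannot remain in $W_0$.

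These two integrations also give (1) and (2). For (2): $\delta_0(\xi,t)\le\tau_0+C_L\Lambda^{1-\theta}/(1-\theta)$ with $\tau_0\le C_L\Lambda^{1-\theta}/(1-\theta)$, a bound depending only on $C_L,\theta$ and $\sup_{\overline{W_0}}|\phi|$, hence uniform in $t,\xi$. For (1): the curve is unit speed and joins $t\in W_1$ to $\partial W_0$, so $\delta_0(\xi,t)\ge\operatorname{dist}(t,\partial W_0)\ge\operatorname{dist}(W_1,\partial W_0)\doteq\delta>0$, again uniform. For (3), put $\psi(\tau)=f(t)-f(\gamma(\tau))\ge 0$, so $\psi'(\tau)=|\nabla f(\gamma(\tau))|\ge C_L^{-1}|f(t)-\psi(\tau)|^{\theta}$; integrating separately over the ranges $f(\gamma)\ge 0$ and $f(\gamma)<0$ — the short bridge only contributes on a tiny sub-interval on which already $\psi\ge f(t)\ge\big((1-\theta)C_L^{-1}\tau_0\big)^{1/(1-\theta)}$ — yields $\psi(\tau)\ge f(t)-\big(f(t)^{1-\theta}-(1-\theta)C_L^{-1}\tau\big)^{1/(1-\theta)}$ for $\tau\le\tau_0$, and $\psi(\tau)\ge\big((1-\theta)C_L^{-1}(\tau-\tau_0)\big)^{1/(1-\theta)}$ for $\tau\ge\tau_0$; an elementary estimate (using superadditivity/convexity of $r\mapsto r^{1/(1-\theta)}$ to glue the two ranges) then gives $\psi(\tau)\ge\tfrac12\big((1-\theta)(8C_L)^{-1}\big)^{1/(1-\theta)}\tau^{1/(1-\theta)}$, the constant $8C_L$ and the factor $\tfrac12$ chosen generously to absorb the case split.

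I expect the one genuinely delicate point to be this passage of the trajectory through the critical set $\Sigma_\xi$: the curve must be continued across such a point while staying piecewise real-analytic, keeping $f$ monotone, and keeping every bound uniform in $t$ and $\xi$. This is exactly where both halves of $(\star)$ are used — the Lojasiewicz inequality \eqref{eq:lojasiewicz} forces every critical value in $W_0$ to vanish, which simultaneously bounds the trajectory length and shows that at most one such passage can occur, while the openness of $t\mapsto\phi(t)\cdot\xi$ furnishes the nearby sublevel point $t'$ and, through the curve selection lemma, the real-analytic bridge reaching it.
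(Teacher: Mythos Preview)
Your proposal is correct and follows essentially the same strategy as the paper: run the unit-speed negative gradient flow of $t\mapsto\phi(t)\cdot\xi$, integrate the Lojasiewicz inequality to get the quantitative descent and finite length, and if the trajectory hits $\Sigma_\xi$ at an interior point $t^\ast$ (necessarily with $f(t^\ast)=0$), bridge to a nearby point with $f<0$ and restart the flow, which can then never return to $\Sigma_\xi$. The only notable difference is the bridge itself: the paper simply takes a straight line segment $[\ell(\xi,t),t_0]$ to a point $t_0$ chosen close enough (by continuity and openness) that $\|t_0-\ell\|\le\delta_1(\xi,t)$ and the descent bound survives with an extra factor $\tfrac12$, whereas you invoke the curve selection lemma to get a real-analytic arc into $\{f<0\}$; both work, but you should make explicit (as the paper does) that the bridge is truncated to length $\le\delta_1(\xi,t)$, since this is what makes the constant in (3) and the bound in (2) uniform in $(\xi,t)$.
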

\begin{proof}
Let $\xi_0 \in \mathbb{S}^{m-1}$ be such that the map $\phi$ satisfies condition $(\star)$ at $\xi_0$. Therefore there exist $V_0 \subset V$ and $W_0$ open balls centered at the origin, $\Gamma \subset \mathbb{R}^m \setminus 0$ an open cone containing $\xi_0$, and constants $C_L>0$ and $\frac{1}{2} \leq \theta < 1$ such that for every $\xi \in \Gamma \cap \mathbb{S}^{m-1}$ the map $ W_0 \ni t \mapsto \phi(t) \cdot \xi$ is open and for every $(t,\xi) \in W_0 \times (\Gamma \cap \mathbb{S}^{m-1})$,
\begin{equation}\label{eq:Loyasiewicz_xi}
\tag{LC}
 \big| \phi(t) \cdot \xi \big|^\theta \leq C_L \| {}^\mathrm{t}\mathrm{d} \phi(t) \xi \|.
 \end{equation}
 Now we set for every $\xi \in \mathbb{R}^m$,
\begin{equation*}
\Sigma_\xi = \big\{ t \in W \; : \;  {}^\mathrm{t} \mathrm{d} \Phi(t) \xi = 0 \big\}.
\end{equation*}
Fix $\xi \in \Gamma \cap \mathbb{S}^{m-1}$. For every $t \in  W_0 \setminus \Sigma_\xi$ we consider the following system of ordinary differential equations:
\begin{equation}
    \begin{cases}
    \frac{\mathrm{d} \alpha}{\mathrm{d} \tau}(\tau)=-\frac{ {}^\mathrm{t} \mathrm{d} \phi(\alpha(\tau)) \xi}{\|  {}^\mathrm{t} \mathrm{d} \phi(\alpha(\tau)) \xi \|};\\
    \alpha(0)=t,
    \end{cases}
\end{equation}
When necessary we shall write $\alpha_{\xi,t}$ instead of $\alpha$. We denote by $\delta_1(\xi,t)>0$ the greatest number such that the solution $\alpha$ is defined on $[0,\delta_1[$.
\begin{lem}\label{lem:estimate_phi_on_curve}
Let $t \in W_0 \setminus \Sigma_\xi$. Then for every $0 \leq \tau < \delta_1(\xi,t)$ the following estimate holds:
\begin{equation}\label{eq:estimate_phi_on_curve_1}
    \phi(t) \cdot \xi - \phi(\alpha_{\xi,t}(\tau)) \cdot \xi \geq \big( (1-\theta)(2C_L)^{-1} \big)^{\frac{1}{1-\theta}} \tau^{\frac{1}{1-\theta} }.
\end{equation}
\end{lem}
Before proving this lemma we shall recall some useful inequalities:
\begin{lem}\label{lem:elementar_inequality}
If $0 < a \leq b$ and $1 < r $ then 
\begin{align*}
	& (b - a)^r \leq b^r - a^r,\\
	& (a + b)^r \leq 2^r (a^r + b^r).
\end{align*}
\end{lem}
\begin{proof}[Proof of \ref{lem:estimate_phi_on_curve}]
Let $t \in W_0 \setminus \Sigma_\xi$ be fixed. Consider the function $h(\tau)=\phi(\alpha(\tau)) \cdot \xi$, for $0 \leq \tau < \delta_1$. We first note that $ h^\prime(\tau) = -\|  {}^\mathrm{t}\mathrm{d} \phi (\alpha(\tau)) \xi \|$, so $h(\tau)$ is a (strictly) decreasing function. Now set $H(\tau)=|h(\tau)|^{1-\theta}$. If $h(0) \leq 0$ then $H^\prime(\tau) \geq (1-\theta)C_L^{-1}$. Integrating in $\tau$ and using the definition of $H(\tau)$ one obtain that
\begin{equation*}
	\big(-\phi(\alpha(\tau)) \cdot \xi \big)^{1-\theta} -\big(-\phi(t) \cdot \xi \big)^{1-\theta} \geq (1-\theta) C_L^{-1} \tau.
\end{equation*}
Thus 
\begin{align*}
	\big((1-\theta) C_L^{-1} \tau \big)^{\frac{1}{1-\theta}} & \leq \bigg( \big(-\phi(\alpha(\tau)) \cdot \xi \big)^{1-\theta} - \big( -\phi(t) \cdot \xi \big)^{1-\theta} \bigg)^{\frac{1}{1-\theta}}\\
	& \leq \phi(t) \cdot \xi - \phi(\alpha(\tau)) \cdot \xi,
\end{align*}
for $0 \leq - \phi(t) \cdot \xi \leq - \phi(\alpha(\tau)) \cdot \xi$, and $\frac{1}{1-\theta} > 1$. Now if $h(0) > 0$ then we shall analyze two cases:
\begin{enumerate}
	\item If $h(\tau)>0$, for every $\tau$, using the same argument as before one obtain that
	\begin{equation*}
		 \big((1-\theta) C_L^{-1} \tau \big)^{\frac{1}{1-\theta}} \leq \phi(t) \cdot \xi - \phi(\alpha(\tau)) \cdot \xi
	\end{equation*}
	\item If there exists a $\tau_0\in [0,\delta_1[$ such that $h(\tau_0)=0$ then by we already know that estimate holds for $0 \leq \tau \leq \tau_0$. So it is enough to consider $\tau > \tau_0$. We have that
          \begin{align*}
              \phi(t) \cdot \xi - \phi(\alpha(\tau)) \cdot \xi &= \phi(t) \cdot \xi - \phi(\alpha(\tau_0)) \cdot \xi + \phi(\alpha(\tau_0)) \cdot \xi - \phi(\alpha(\tau)) \cdot \xi\\
              &\geq \big((1-\theta)C_L^{-1}\big)^{\frac{1}{1-\theta}}\tau_0^{\frac{1}{1-\theta}}+\phi(\alpha_{\xi,\alpha_{\xi,t}(\tau_0)}(0))\cdot\xi-\phi(\alpha_{\xi,\alpha_{t}(\tau_0)}(\tau-\tau_0))\cdot\xi\\
              &\geq \big((1-\theta)C_L^{-1}\big)^{\frac{1}{1-\theta}}\tau_0^{\frac{1}{1-\theta}} + \big((1-\theta)C_L^{-1}\big)^{\frac{1}{1-\theta}}\big(\tau-\tau_0\big)^{\frac{1}{1-\theta}}\\
              &=\big((1-\theta)C_L^{-1}\big)^{\frac{1}{1-\theta}}\bigg(\tau_0^{\frac{1}{1-\theta}}+\big(\tau-\tau_0\big)^{\frac{1}{1-\theta}}\bigg)\\
              &\geq \big((1-\theta)(2C_L)^{-1}\big)^{\frac{1}{1-\theta}}\tau^{\frac{1}{1-\theta}},
          \end{align*}
          where we used that $\alpha_{\xi,t}(\tau)$ is a 1-parameter subgroup.
      \end{enumerate}
\end{proof}
Now we shall construct the curves $\gamma_{\xi,t}(\tau)$ for every $\xi \in \Gamma \cap \mathbb{S}^{m-1}$, and $t \in W_0 \setminus \Sigma_\xi$.  One consequence of Lemma \ref{lem:estimate_phi_on_curve} is that $\sup_{t \in W \setminus \Sigma_\xi} \delta_1(\xi,t) < \infty$, and therefore the limit 
\begin{equation*}
	\ell(\xi,t)\doteq\lim_{\tau\to\delta_1(\xi,t)}\alpha_{\xi,t}(\tau),
\end{equation*}
exists and belong to the boundary of $W_0 \setminus \Sigma_\xi$. We shall split the construction of the curve $\gamma_{\xi,t}(\tau)$ in two cases:
\begin{itemize}
	\item $\ell(x,t) \in \partial W_0$; \\
		In this cases we define $\gamma_{\xi,t} : [0,\delta_1] \longrightarrow \overline{W_0}$, by
		\begin{equation*}
   			\gamma_{\xi,t}(\tau)=
			\begin{cases}
        				\alpha_{\xi,t}(\tau),& \quad 0 \leq \tau < \delta_1(\xi,t);\\
        				\ell(\xi,t), & \quad \tau = \delta_1(\xi,t).
        			\end{cases}
   	 	\end{equation*}
	\item $\ell(\xi,t) \in \Sigma_\xi \setminus \partial W_0;$\\
	By Property $(\star)$ we have that $\phi(\ell(\xi,t)) \cdot \xi=0$, so by Lemma \ref{lem:estimate_phi_on_curve}, we have that 
	\begin{equation*}
		\phi(t) \cdot \xi \geq \big((1-\theta)(2C_L)^{-1}\big)^{\frac{1}{1-\theta}} \delta_1(\xi,t)^{\frac{1}{1-\theta}}.
	\end{equation*}
	Now since  the map $t \mapsto \phi(t) \cdot \xi$ is open (and continuous), there is $t_0 \in W_0$ such that
	 \begin{itemize}
 		\item  $\phi(t_0) \cdot \xi<0$;
 		\item for every $0\leq\lambda\leq 1$ we have that 
 		\begin{equation*}
			\phi(t) \cdot \xi - \phi(\lambda t_0+(1-\lambda) \ell(\xi,t)) \cdot \xi \geq \frac{1}{2} \big((1-\theta) (2C_L)^{-1} \big)^{\frac{1}{1-\theta}} \delta_1(\xi,t)^{\frac{1}{1-\theta}};
 		\end{equation*}
 		\item $\| t_0 - \ell(\xi,t)\| \leq \delta_1(\xi,t)$.
	\end{itemize}
	In this case we define 
	\begin{equation*}
   			\gamma_{\xi,t}(\tau)=
			\begin{cases}
        				\alpha_{\xi,t} \bullet [\ell(\xi,t),t_0] \bullet\alpha_{\xi,t_0} (\tau) ,&\quad 0 \leq \tau < \delta_1(\xi,t)+\|t_0-\ell(\xi,t)\|+\delta_1(\xi,t_0) ;\\
        				\ell(x,t_0) ,&\quad \tau= \delta_1(\xi,t)+\|t_0-\ell(\xi,t)\|+\delta_1(\xi,t_0),
        			\end{cases}
   	 	\end{equation*}
		where $[\ell(\xi,t),t_0]$ is the line segment with derivative constant and equals to one, and the $\bullet$ sign stand for concatenation.
\end{itemize}
We denote by $\delta_0(\xi,t)$ the upper bound of the interval where $\gamma_{\xi,t}$ is defined, that is,
\begin{equation*}
	\gamma_{\xi,t} : [0,\delta_0(\xi,t)] \longrightarrow \overline{W_0}.
\end{equation*}
We claim that the family of curves $\gamma_{\xi,t}$ satisfies the following properties:
\begin{enumerate}
	\item For every $W_1 \Subset W_0$ open ball centered at the origin there exists $\delta > 0$ such that $\delta_0(\xi,t)\geq \delta$, for every $\xi \in \Gamma \cap \mathbb{S}^{m-1}$, and $ t \in W_1\setminus \Sigma_{\xi}$;\\
	\item The length of the curves $\gamma_{\xi,t}$ is uniformly bounded for every $\xi \in \Gamma \cap \mathbb{S}^{m-1}$, and $ t \in W_0 \setminus \Sigma_{\xi}$; \\
     	\item If $\xi \in \Gamma \cap \mathbb{S}^{m-1}$, and $ 0 \leq \tau \leq \delta_0(\xi,t)$, then 
     	\begin{equation*}
     		\Phi(t) \cdot \xi - \Phi(\gamma_{\xi,t}(\tau)) \cdot \xi \geq \frac{1}{2} \big( (1-\theta) (8C_L)^{-1} \big)^{\frac{1}{1-\theta}} \tau^{\frac{1}{1-\theta}}
     	\end{equation*}
\end{enumerate}
Since $\|\gamma_{\xi,t}^\prime(\tau)\|=1$ for every $\tau$, except for eventually two points, then by Lemma \ref{lem:estimate_phi_on_curve} and the definition of $\delta_0$,
\begin{align*}
	\mathrm{length}(\gamma_{\xi,t}) & = \int_{0}^{\delta_0(\xi,t)}\|\gamma_{\xi,t}^\prime(\tau^\prime)\| \mathrm{d}\tau^\prime\\
	&= \delta_0(\xi,t)\\
	& \leq  3 \big(2 \sup_{\overline{W}} \|\phi \|\big)^{1-\theta}(1-\theta)^{-1} 2C_L.
\end{align*}
Let $r > 0$ be the radius of $W_0$ and let $ 0 < r_1 < r$. So if $|t| < r_1$ we have that
\begin{align*}
	r - r_1& \leq \|\gamma_{\xi,t}(\delta_0(\xi,t)) - t \| \\
	& \leq \mathrm{length}(\gamma_{\xi,t})\\
	& = \delta_0(\xi,t).
\end{align*}
The last property follows directly from Lemma \ref{lem:estimate_phi_on_curve} if $\ell(\xi,t) \in \partial W_0$. Now if $t$ is such that $\ell(\xi,t) \in \Sigma_\xi \setminus \partial W_0$, then we have three cases to analyze. If $0 \leq \tau \leq \delta_1(\xi,t)$, the desired inequality follows from Lemma \ref{lem:estimate_phi_on_curve}. Now  if $\delta_1(\xi,t) \leq \tau \leq \delta_1(\xi,t)+ \| t_0 + \ell(\xi,t) \| \leq 2 \delta_1(\xi,t)$, then $\gamma_{\xi,t}(\tau) = \lambda t_0+(1-\lambda) \ell(\xi,t)$ for some $0 \leq \lambda \leq 1$, so by the definition of $t_0$,
\begin{align*}
	\phi(t) \cdot \xi - \phi(\gamma_{\xi,t}(\tau)) \cdot \xi &= \phi(t) \cdot \xi - \phi(\lambda t_0+(1-\lambda) \ell(\xi,t)) \cdot \xi\\
	&\geq \frac{1}{2}\big((1-\theta)(2C_L)^{-1}\big)^{\frac{1}{1-\theta}}\delta_1(\xi,t)^{\frac{1}{1-\theta}}\\
	&\geq\frac{1}{2}\big((1-\theta)(2C_L)^{-1}\big)^{\frac{1}{1-\theta}}\left(\frac{\tau}{2}\right)^{\frac{1}{1-\theta}}\\
	& = \frac{1}{2}\big((1-\theta)(4C_L)^{-1}\big)^{\frac{1}{1-\theta}}\tau^{\frac{1}{1-\theta}}.
\end{align*}
If $\delta_1(\xi,t)+ \| t_0 + \ell(\xi,t) \|  \leq \tau \leq \delta_1(\xi,t)+ \| t_0 + \ell(\xi,t) \| + \delta_1(\xi,t_0)$, then
\begin{align*}
              \phi(t) \cdot \xi - \phi(\gamma_{\xi,t}(\tau)) \cdot \xi &= \Phi(t) \cdot \xi - \phi(t_0) \cdot \xi + \phi(t_0) \cdot \xi - \phi(\gamma_{\xi,t}(\tau)) \cdot \xi\\
              &\geq \frac{1}{2}\big((1-\theta)(4C_L)^{-1}\big)^{\frac{1}{1-\theta}}(\delta_1(\xi,t)+ \| t_0 + \ell(\xi,t) \|)^{\frac{1}{1-\theta}} \\
              &+ \big((1-\theta)(2C_L)^{-1}\big)^{\frac{1}{1-\theta}}(\tau - \delta_1(x,t) - \| t_0 - \ell(\xi,t) \|)^{\frac{1}{1-\theta}} \\
              &\geq \frac{1}{2}\big((1-\theta)(4C_L)^{-1}\big)^{\frac{1}{1-\theta}}\bigg((\delta_1(\xi,t)+ \| t_0 + \ell(\xi,t) \|)^{\frac{1}{1-\theta}}\\
              & + (\tau - \delta_1(\xi,t) - \| t_0 - \ell(\xi,t) \|)^{\frac{1}{1-\theta}}\bigg)\\
              &\geq \frac{1}{2}\big((1-\theta)(8C_L)^{-1}\big)^{\frac{1}{1-\theta}} \tau^{\frac{1}{1-\theta}},
\end{align*}
where in the last inequality we used Lemma \ref{lem:elementar_inequality}.
\end{proof}

%
\comment{
\begin{thm}\label{thm:microlocal_smooth_hypoelliptic_tube}
Let $\xi_0 \in \mathbb{S}^{m-1}$ be such that the map $\Phi$ satisfies condition $(\star)$ at $\xi_0$, with $\theta = 1/2$. Let  $u \in \mathcal{C}^1 (W; \mathcal{D}^\prime(V))$ be such that $(0,0,\xi_0,0) \notin \mathrm{WF}(\mathrm{L}_j u)$, for $j = 1, \dots, n$. Then $(0,0,\xi_0,0) \notin \mathrm{WF}(u)$.
\end{thm}
}
%
%
%
\comment{
The corollaries follows from the microlocal versions for by the elliptic regularity Theorem, which says that $\mathrm{WF}(u) \subset \mathrm{Char}(\mathcal{V}) \cup \mathrm{WF}(\mathbb{L}u)$ and $\mathrm{WF}_s(u)  \subset \mathrm{Char}(\mathcal{V}) \cup \mathrm{WF}_s(\mathbb{L}u)$, where $\mathbb{L}u = \sum_{j=1}^n \mathrm{L}_j u \mathrm{d}t_j$, so the only points we need to care about are the ones in $\mathrm{Char}(\mathcal{V})$, and if $u \in  \mathcal{D}^\prime(U)$ is such that $\mathrm{L}_j u \in \mathcal{C}^\infty(U)$, then by the usual elliptic theory (see Proposition I.4.3 of \cite{trevesbook}), we have that there exists $V_0 \subset V$, and $W_0 \subset W$ open balls centered at the origin such that $V_0 \times W_0 \Subset U$, and $u|_{V_0 \times W_0} \in \mathcal{C}^\infty (W_0; \mathcal{D}^\prime(V_0))$.
}
%
%
\begin{proof}[Proof of Therem \ref{thm:microlocal_hypoelliptic_tube}]
Let $V_0 \subset V$ and $W_0 \subset W$ be open neighborhoods of the origin, $\Gamma \subset \mathbb{R}^m \setminus 0$ be an open cone containing $\xi_0$ and $C_L, \theta$ be positive constants with $ 1/2 \leq \theta < 1$, such that 
\begin{itemize}
	\item for every $\xi \in \Gamma$ the map $W_0 \ni t \mapsto \phi(t) \cdot \xi$ is open;
	\item for every $(t,\xi) \in W_0 \times (\Gamma \cap \mathbb{S}^{m-1}$, $| \phi(t) \cdot \xi |^\theta \leq C_L \| {}^{\mathrm{t}} \mathrm{d} \phi(t) \xi \|$
\end{itemize}
Let $W_1 \Subset W_0$ be an open neighborhood of the origin, and let $V_2 \Subset V_1 \Subset V_0$ be open neighborhoods of the origin. Consider $\chi \in \mathcal{C}^\infty_c(V_0)$ a cut-off function such that $\chi \equiv 1$ on $V_1$. For every $\xi \in \Gamma$ we denote $\dot{\xi} =\frac{\xi}{|\xi|}$. Now let $0 < \kappa^{-1} \leq s$ to be fixed later, $\xi \in \Gamma$ with $|\xi| \geq 1$, $t \in W_1 \setminus \Sigma_{\dot{\xi}}$ and $x \in V_2$. Writing $\gamma = \gamma_{\dot{\xi},t}$, $\delta_0 = \delta_0(\dot{\xi},t)$, and $z = Z(x,t)$, we have that
\begin{align}\label{eq:fundamental_theorem_calculus_FBI}
	\mathfrak{F}_\kappa [\chi u](t_\ast ; z,\xi) - \mathfrak{F}_\kappa [\chi u](t ; z,\xi) &= \int_0^{\delta_0} \frac{\mathrm{d}}{\mathrm{d} \tau} \left( \mathfrak{F}_\kappa [\chi u](\gamma(\tau) ; z,\xi) \right) \mathrm{d} \tau \\ \nonumber
	& = \sum_{j=1}^n \int_0^{\delta_0}  \mathfrak{F}_\kappa [\chi \mathrm{L}_j u](\gamma(\tau);z,\xi)\gamma_j^\prime(\tau)\mathrm{d}\tau\\ \nonumber
	& +\sum_{j=1}^n \int_0^{\delta_0}  \mathfrak{F}_\kappa [u\mathrm{L}_j \chi ](\gamma(\tau);z,\xi) \gamma_j^\prime(\tau)\mathrm{d}\tau.
\end{align}
Our goal is to estimate $| \mathfrak{F}_\kappa [\chi u](t; z, \xi)|$, and to do so we shall use \eqref{eq:fundamental_theorem_calculus_FBI} and estimate all the other terms. By Proposition \ref{prop:curves}, if $t^\prime = \gamma(\tau)$ for some $ 0 \leq \tau \leq \delta_0$ then
\begin{equation*}
	\phi(t) \cdot \xi - \phi(t^\prime) \cdot \xi \geq c \tau^{\frac{1}{1-\theta}} |\xi|,
\end{equation*}
where $c = \frac{1}{2}\big((1-\theta)(8C_L)^{-1}\big)^{\frac{1}{1-\theta}}$. Let us start by the most involving term, \textit{i.e.} estimating $|\mathfrak{F}_\kappa [\chi \mathrm{L}_j u](\gamma(\tau);z,\xi)| $. Let $t^\prime = \gamma(\tau)$, for some $0 \leq \tau \leq \delta_0$, and let $j \in \{1, \dots, n \}$ be fixed. As in the first part of the proof of Theorem \ref{thm:microlocal_Gevrey_FBI} we shall assume for simplicity that $\chi \mathrm{L}_j u = \mathrm{bv}(F)$, where $F \in \mathcal{C}^\infty (\mathcal{W}_\delta(V_0 \times W_0, \widetilde{\Gamma}))$, $\widetilde{\Gamma} \subset \mathbb{R}^{n+m} \setminus 0$ is an open convex cone with $(\xi_0, 0) \cdot \widetilde{\Gamma} <0$, and 
\begin{equation*}
 	\begin{cases}
		|F((x,t) + iv)| \leq \frac{C_1}{|v|^{k_0}};\\
		| (\bar{\partial}_z + \bar{\partial_\eta}) F((x,t) + iv)| \leq C_F^{k+1} k!^{s-1}|v|^k
	\end{cases}
	\eqno{
		\begin{matrix}
			x \in V_1 & t \in W_1 \\
			k \in \mathbb{Z}_+ & v \in \widetilde{\Gamma}_\delta.
		\end{matrix}
		}
\end{equation*}
Fixing $v = (v^\prime, v^{\prime \prime}) \in \widetilde{\Gamma}$ and writing $Q(w,w^\prime,\zeta) = i \zeta \cdot (w - w^\prime) - \langle \zeta \rangle^\kappa \langle w - w^\prime \rangle^2$, we can write 
\begin{equation*}
 	\mathfrak{F}_\kappa[\chi \mathrm{L}_j u](t^\prime; z, \xi) = \lim_{\lambda \to 0^+} \int_{V} e^{Q(z, Z(x^\prime, t^\prime), \xi)} \chi(x^\prime) f(x^\prime + i\lambda v^\prime, t^\prime + i\lambda v^{\prime \prime}) \Delta_\kappa(z - Z(x^\prime, t^\prime), \xi) \mathrm{d} x^\prime.  
\end{equation*}
Let $0 < \lambda < \delta$ . Then by Stokes' Theorem, for  $0 < \sigma \ll 1$, 
\begin{align*}
 	\int_{V} & e^{Q(z, Z(x^\prime, t^\prime), \xi)} \chi(x^\prime) f(x^\prime + i\lambda v^\prime, t^\prime + i\lambda v^{\prime \prime}) \Delta_\kappa(z - Z(x^\prime, t^\prime), \xi) \mathrm{d} x^\prime = \\
	& = \int_{V_1} e^{Q(z, Z(x^\prime, t^\prime), \xi)} F(x^\prime + i\lambda v^\prime, t^\prime + i\lambda v^{\prime \prime}) \Delta_\kappa (z - Z(x^\prime, t^\prime), \xi) \mathrm{d} x^\prime\\	
  	& + \int_{V \setminus V_1} e^{Q(z, Z(x^\prime, t^\prime), \xi)} \chi(x^\prime)f(x^\prime + i\lambda v^\prime, t^\prime + i\lambda v^{\prime \prime}) \Delta_\kappa (z - Z(x^\prime, t^\prime), \xi) \mathrm{d} x^\prime\\
	& = \int_{V_1} e^{Q(z, Z(x^\prime, t^\prime) + i \sigma v^\prime, \xi)} F(x^\prime + i (\sigma + \lambda)v^\prime, t^\prime + i \lambda v^{\prime \prime}) \Delta_\kappa (z - Z(x^\prime, t^\prime) - i\sigma v^\prime, \xi) \mathrm{d} x^\prime\\
	& + \int_{\partial V_1 + i [0, \sigma]v^\prime} e^{Q(z, z^\prime + i \phi(t^\prime), \xi)} F(z^\prime + i \lambda v^\prime, t^\prime + i \lambda v^{\prime \prime}) \Delta_\kappa (z - z^\prime - i\phi(t^\prime), \xi) \mathrm{d} z^\prime\\
	& - \int_{V_1 + i [0, \sigma] v^\prime}e^{Q(z, z^\prime + i \phi(t^\prime), \xi)}\sum_{k=1}^m \bar{\partial}_z F(z^\prime + i \lambda v^\prime, t^\prime + i\lambda v^{\prime \prime})\Delta_\kappa (z - z^\prime - i\phi(t^\prime), \xi) \mathrm{d} \bar{z^\prime} \wedge \mathrm{d} z^\prime \\
	& + \int_{V \setminus V_1} e^{Q(z, Z(x^\prime, t^\prime), \xi)} \chi(x^\prime) F(x^\prime + i \lambda v^\prime, t^\prime + i \lambda v^{\prime \prime}) \Delta_\kappa (z - Z(x^\prime, t^\prime), \xi) \mathrm{d} x^\prime.
\end{align*}
To estimate the exponential in all the four integrals we shall use the precise control of $\phi(\cdot) \cdot \xi$ along the curve $\gamma$ and the fact that $v^\prime \cdot \xi_0 < 0$. Shrinking $\Gamma$ if necessary, there exist a positive constant $c^\sharp > 0$ such that $v^\prime \cdot \zeta \leq -c^\sharp |\zeta|$, for all $\zeta \in \Gamma$. Writing $z^\prime = Z(x^\prime, t^\prime) + i \sigma^\prime v^\prime$, where $0 \leq \sigma^\prime \leq \sigma$, we have that 
\begin{align*}
	\left| e^{Q(z, z^\prime,\xi)} \right| & = e^{ - \big( \phi(t) \cdot \xi - \phi(t^\prime) \cdot \xi \big) + \sigma^\prime v^\prime \cdot \xi - |\xi|^\kappa \left(|x-x^\prime|^2 - \left|\phi(t) - \phi(t^\prime) - \sigma^\prime v^\prime \right|^2\right) }\\ 
	&\leq e^{ - c \tau^{\frac{1}{1-\theta}} |\xi| - \sigma^\prime c^\sharp |\xi| - |x - x^\prime|^2 |\xi|^\kappa + 2 {\sigma^\prime}^2 |v^\prime|^2 |\xi|^\kappa + 2 |\phi(t) - \phi(t^\prime)|^2 |\xi|^\kappa } \\ 
	& \leq e^{ - |x - x^\prime|^2 |\xi|^\kappa - \sigma^\prime |\xi|\left(c^\sharp - 2|v^\prime|^2|\xi|^{\kappa - 1}\right) - |\xi|^\kappa \left[ c\tau^{\frac{1}{1-\theta}}|\xi|^{1-\kappa} - 2C_\phi^2 |t - t^\prime|^2 \right] } \\
	& \leq e^{ - |x - x^\prime|^2 |\xi|^\kappa - \sigma^\prime \frac{c^\sharp}{2}|\xi| - |\xi|^\kappa \left[ c\tau^{\frac{1}{1-\theta}}|\xi|^{1-\kappa} - 2C_\phi^2\tau^2 \right] },
\end{align*}
if we assume $\sigma < c^\sharp/4|v^\prime|$, where $C_\phi$ is the Lipchitz constant of $\phi$, and we used that $|t-t^\prime|$ can be bounded by the length of the curve $\gamma$ from $0$ to $\tau$, which is $\tau$, and $|\xi| \geq 1$. Since the term $e^{-|\xi|^\kappa \left[ c\tau^{\frac{1}{1-\theta}}|\xi|^{1-\kappa} - 4C_\phi^2\tau^2 \right] }$ does not depend on $x^\prime$, $\sigma^\prime$ or $\lambda$, it can be put outside of the integrals and the limit. Carrying out the same argument as in the \textit{1.} $\Rightarrow$ \textit{2.} of the proof of Theorem \ref{thm:microlocal_Gevrey_FBI} we obtain
\begin{equation*}
	|\mathfrak{F}_\kappa [\chi \mathrm{L}_j u](t^\prime; z,\xi) | \leq \text{Const} \cdot e^{-\epsilon |\xi|^{\frac{1}{s}} - |\xi|^\kappa \left[ c\tau^{\frac{1}{1-\theta}}|\xi|^{1-\kappa} - 4C_\Phi^2\tau^2 \right]  },
\end{equation*}
for some $\epsilon > 0$. We shall prove that the term $e^{-|\xi|^\kappa \left[ c\tau^{\frac{1}{1-\theta}}|\xi|^{1-\kappa} - 4C_\phi^2\tau^2 \right] }$ is "harmless" when compared to $e^{-\epsilon |\xi|^{\frac{1}{s}}}$ (or to any polynomial decay in the case of $\theta = 1/2$). To do so, we calculate the minimum of the function $g(\tau) = c |\xi| \tau^{\frac{1}{1-\theta}} - c_1|\xi|^\kappa \tau^2$, where $c_1 = 4C_\phi^2$. Since $g^\prime(\tau) = \frac{c|\xi|}{1-\theta} \tau^{\frac{1}{1-\theta}-1} - 2c_1 |\xi|^\kappa\tau$, if $\theta > 1/2$, we have that $g^\prime(\tau_0) = 0$ if and only if
\begin{equation*}
	\tau_0 = \left(\frac{2 c_1}{c}(1-\theta)|\xi|^{\kappa-1}\right)^{\frac{1-\theta}{2\theta-1}}.
\end{equation*}
Thus the minimum of $g(\tau)$ is
\begin{align*}
	g(\tau_0)&= c_1|\xi| \left( \frac{2 c_1}{c} (1-\theta) |\xi|^{\kappa-1}\right)^{\frac{1}{2\theta-1}} - c_1 |\xi|^\kappa \left( \frac{2 c_1}{c} (1-\theta) |\xi|^{\kappa-1} \right)^{\frac{2-2\theta}{2\theta-1}}\\
	&=|\xi|^{\frac{ 2\theta + \kappa - 2}{2\theta-1}} \left\{ c_1 \left( \frac{2 c_1}{c} (1-\theta) \right)^{\frac{1}{2\theta-1}} - c_1 \left( \frac{2 c_1}{c} (1-\theta) \right)^{\frac{2-2\theta}{2\theta-1}} \right\}
\end{align*}
Now $\frac{2\theta+\kappa-2}{2\theta-1} < \frac{1}{s}$ is equivalent to $\kappa <\frac{2\theta-1}{s}+2-2\theta$, and since $\frac{1}{s} < \frac{2\theta-1}{s}+2-2\theta$, it is aways possible to find a $\kappa$ satisfying $\frac{1}{s} < \kappa < \frac{2\theta-1}{s}+2-2\theta$. If $\theta = 1/2$, then $g(\tau) = \tau^2(c|\xi| - c_1|\xi|^\kappa)$, thus $g(\tau) \geq 0$ if $|\xi| \geq (c_1/c)^{1/(1-\kappa)}$, therefore the extra exponential is trivially harmless, \textit{i.e.} it "loses" even to polynomial decay. Thus we have obtained that
\begin{equation*}
	|\mathfrak{F}_\kappa [\chi \mathrm{L}_j u](t^\prime; z,\xi) | \leq \text{Const} \cdot e^{-\epsilon |\xi|^{\frac{1}{s}}},
\end{equation*}
for some positive constant $\epsilon > 0$. The estimate of $|\mathfrak{F}_\kappa [u \mathrm{L}_j \chi] (t^\prime; z,\xi)|$ follows from the fact that $x \in V_2 \Subset V_1$, and $x^\prime \notin V_1$, so
\begin{equation*}
	|\mathfrak{F}_\kappa [u \mathrm{L}_j \chi] (t^\prime; z,\xi)| \leq \text{Const} \cdot e^{- \epsilon |\xi|^{\kappa} },
\end{equation*}
for some $\epsilon > 0$. The last partial F.B.I left to estimate in \eqref{eq:fundamental_theorem_calculus_FBI} is  $|\mathfrak{F}_\kappa [\chi u] (t_\ast; z,\xi)|$. To obtain the desired decay it is enough to bound the exponential term in the partial F.B.I. transform, and we have already proven that
\begin{equation*}
\left| e^{Q(z,Z(x^\prime,t_\ast),\xi)} \right| \leq e^{-|x - x^\prime|^2 |\xi|^\kappa - |\xi|^\kappa \left( c \delta_0^{\frac{1}{1-\theta}} |\xi|^{1-\kappa} - 4C_\phi^2 \delta_0^2  \right)}.
\end{equation*}
Here we recall that $\delta_0 = \delta_0(\xi,t)$. By Proposition \ref{prop:curves}, $\delta_0 > \delta$, so 
\begin{align*}
	c\delta_0^{\frac{1}{1-\theta}}|\xi|^{1-\kappa} - 4C_\phi^2\delta_0^2 &= \delta_0^2 \left[ c |\xi|^{1-\kappa} \delta_0^{\frac{1}{1-\theta} - 2} - 4C_\phi^2\right] \\
	&\geq \delta_0^2 \left[ c |\xi|^{1-\kappa} \delta^{\frac{1}{1-\theta} - 2} - 4C_\phi^2\right]\\
	&\geq \delta^2 4 C_\phi^2,
\end{align*}
if $|\xi| \geq \left[ \frac{8C_\phi^2}{c \delta^{\frac{1}{1-\theta} - 2} } \right]^{\frac{1}{1-\kappa}}$. Therefore, assuming $|\xi| \geq \left[ \frac{8C_\phi^2}{c \delta^{\frac{1}{1-\theta} - 2} } \right]^{\frac{1}{1-\kappa}}$,
\begin{equation*}
	|\mathfrak{F}_\kappa [ \chi u] (t_\ast; z,\xi)| \leq \text{Const} \cdot e^{- \epsilon |\xi|^{ \kappa}},
\end{equation*}
for some $\epsilon > 0$. Summing up, we have obtained the following: There exist positive constants $C,\epsilon>0$ such that for every $x \in V_2$, $\frac{1}{s} < \kappa < \frac{2\theta-1}{s}+2-2\theta$,  $\xi \in \Gamma$ with $|\xi| \geq \left[ \frac{8C_\phi^2}{c \delta^{\frac{1}{1-\theta} - 2} } \right]^{\frac{1}{1-\kappa}}$ and $t \in W_1 \setminus \Sigma_{\dot{\xi}}$ the following estimates hold true:
\begin{equation*}
	\begin{cases}
		|\mathfrak{F}_\kappa [\chi \mathrm{L}_j u](t^\prime; z,\xi) | \leq C e^{-\epsilon |\xi|^{\frac{1}{s}}};\\
		|\mathfrak{F}_\kappa [u \mathrm{L}_j \chi] (t^\prime; z,\xi)| \leq C e^{- \epsilon |\xi|^{\frac{1}{s}}},\\
		|\mathfrak{F}_\kappa [ \chi u] (t_\ast; z,\xi)| \leq C e^{- \epsilon |\xi|^{\frac{1}{s}}}.
	\end{cases}
\end{equation*}
Since the length of the curves $\gamma_{\dot{\xi},t}$ are uniformly bounded for $\xi \in \Gamma$ and $t \in W_2 \setminus \Sigma_{\dot{\xi}}$, we have that, for every $x \in V_2$, $\frac{1}{s} < \kappa < \frac{2\theta-1}{s}+2-2\theta$,  $\xi \in \Gamma$ with $|\xi| \geq \left[ \frac{8C_\phi^2}{c \delta^{\frac{1}{1-\theta} - 2} } \right]^{\frac{1}{1-\kappa}}$ and $t \in W_1 \setminus \Sigma_{\dot{\xi}}$ 
\begin{equation*}
	|\mathfrak{F}_\kappa [\chi u](t; Z(x,t), \xi)| \leq \text{Const} \cdot e^{-\epsilon |\xi|^\frac{1}{s}}.
\end{equation*}
In view of $\Sigma_{\dot{\xi}}$ being an analytic variety for every $\xi \in \Gamma$, and the set $|\xi| \leq \left[ \frac{8C_\phi^2}{c \delta^{\frac{1}{1-\theta} - 2} } \right]^{\frac{1}{1-\kappa}}$ being compact, if $\frac{1}{s} < \kappa < \frac{2\theta-1}{s}+2-2\theta$, there exist $C, \epsilon > 0$ such that
\begin{equation*}
	|\mathfrak{F}_\kappa [\chi u](t; Z(x,t), \xi)| \leq C e^{-\epsilon |\xi|^\frac{1}{s}},
\end{equation*}
for every $x \in V_2$, $t \in W_1$, and $\xi \in \Gamma$. Therefore, by Theorem \ref{thm:microlocal_Gevrey_FBI_tube}, we have that $(0,0,\xi_0,0) \notin \mathrm{WF}_s(u)$. Now if $\theta = 1/2$, and $(0,0,\xi_0,0) \notin \text{WF}(\mathbb{L}u )$, all estimates but one remains the same, and the one that changes is $|\mathfrak{F}_\kappa [\chi \mathrm{L}_j u](t^\prime; z,\xi)|$. As we already pointed out, when $\theta = 1/2$, the term $e^{-|\xi|^\kappa \left[ c\tau^2 \xi|^{1-\kappa} - 4C_\phi^2\tau^2 \right] }$ is "harmless" when compared with polynomial decay, which is exactly the type of decay that will replace $e^{-\epsilon|\xi|^{\frac{1}{s}}}$.
\end{proof}
\begin{cor}\label{cor:Gevrey_hypoelliptic_tube}
Let $s > 1$. If the map $\phi$ satisfies condition $(\star)$ at all points $\xi_0 \in \mathbb{S}^{m-1}$ then the system $\mathcal{V}$ is $s$-Gevrey hypoelliptic at the origin.
\end{cor}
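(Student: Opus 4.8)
The plan is to deduce Corollary \ref{cor:Gevrey_hypoelliptic_tube} from Theorem \ref{thm:microlocal_hypoelliptic_tube} by a compactness-and-covering argument on the cosphere, combined with the standard elliptic regularity of the system $\mathcal{V}$. Recall that $\mathbb{L}$ is elliptic in the $t$-variables, so for any $u\in\mathcal{D}^\prime(V\times W)$ with $\mathbb{L}u\in\mathcal{C}^\infty$ (indeed in $G^s$) one has $u\in\mathcal{C}^1(W;\mathcal{D}^\prime(V))$ after shrinking, which puts us in the hypothesis class of Theorem \ref{thm:microlocal_hypoelliptic_tube}; moreover the Gevrey microlocal elliptic regularity theorem gives $\mathrm{WF}_s(u)\subset\mathrm{Char}(\mathcal{V})\cup\mathrm{WF}_s(\mathbb{L}u)$, so that any $(0,0,\xi_0,\eta_0)\in\mathrm{WF}_s(u)$ with $(\xi_0,\eta_0)\neq0$ must lie in $\mathrm{Char}(\mathcal{V})=\{\eta=0,\ {}^{\mathrm{t}}\mathrm{d}\Phi(t)\xi=0\}$ and hence has the form $(0,0,\xi_0,0)$ with $\xi_0\in\mathbb{R}^m\setminus0$.

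Next I would fix such a $u$ with $\mathbb{L}u\in G^s$ near the origin and argue that $(0,0,\xi_0,0)\notin\mathrm{WF}_s(u)$ for \emph{every} $\xi_0\in\mathbb{S}^{m-1}$. Since $\phi$ satisfies $(\star)$ at $\xi_0$ and $(0,0,\xi_0,0)\notin\mathrm{WF}_s(\mathbb{L}u)$ (as $\mathbb{L}u$ is Gevrey-$s$ in a whole neighborhood of the origin), Theorem \ref{thm:microlocal_hypoelliptic_tube} yields $(0,0,\xi_0,0)\notin\mathrm{WF}_s(u)$. Because $\xi_0$ was an arbitrary point of the compact cosphere $\mathbb{S}^{m-1}$, the fiber of $\mathrm{WF}_s(u)$ over the origin in the relevant directions $\{\eta=0\}$ is empty; combined with the elliptic containment above (which handles all directions with $\eta\neq0$), we conclude that no covector $(\xi_0,0)$ or $(\xi_0,\eta_0)$ lies in $\mathrm{WF}_s(u)$ over the origin. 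The wave-front set being closed and conic, its complement is open, so there is a full conic neighborhood of $\{0\}\times(\mathbb{R}^{n+m}\setminus0)$ disjoint from $\mathrm{WF}_s(u)$; by a standard covering of $\mathbb{S}^{m+n-1}$ by finitely many such cones and shrinking the base neighborhood accordingly, $(0,0)$ has a neighborhood $V^\prime\times W^\prime$ over which $\mathrm{WF}_s(u)$ is empty. By the characterization of the Gevrey-$s$ wave front set, this means $u\in G^s(V^\prime\times W^\prime)$, i.e. $\mathcal{V}$ is $s$-Gevrey hypoelliptic at the origin.

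The only point requiring a little care — and the place I would expect the main subtlety — is the passage from "$\mathrm{WF}_s(u)$ has empty fiber over the origin" to "$u$ is Gevrey-$s$ in a neighborhood of the origin": one must use that $\mathrm{WF}_s(u)$ is closed in $(V\times W)\times(\mathbb{R}^{n+m}\setminus0)$, extract from the open cover of the cosphere at the origin a uniform conic neighborhood, and then invoke lower semicontinuity to pull the base point slightly off the origin while keeping the fibers empty; this is the usual "wave front set empty over a point $\Rightarrow$ smooth/Gevrey near that point" argument, applied in the $G^s$ category. Everything else is bookkeeping: checking that the hypotheses of Theorem \ref{thm:microlocal_hypoelliptic_tube} are met at each $\xi_0$ (which is immediate from the global assumption that $(\star)$ holds at every point of $\mathbb{S}^{m-1}$ and from $\mathbb{L}u\in G^s$ locally), and invoking the elliptic regularity theorem to dispose of the non-characteristic directions.
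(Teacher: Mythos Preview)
Your proposal is correct and follows essentially the same route as the paper: use Gevrey elliptic regularity to reduce $\mathrm{WF}_s(u)|_0$ to the characteristic directions $(0,0,\xi_0,0)$, then invoke Theorem \ref{thm:microlocal_hypoelliptic_tube} at each $\xi_0\in\mathbb{S}^{m-1}$ to conclude the fiber is empty. The only difference is that you make heavier weather of the last step than necessary: the passage from ``$\mathrm{WF}_s(u)|_0=\emptyset$'' to ``$u\in G^s$ near $0$'' is simply the standard identity $\pi(\mathrm{WF}_s(u))=\mathrm{singsupp}_s(u)$, so no separate covering or semicontinuity argument is needed; the paper accordingly dispatches this in a single clause.
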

\begin{cor}\label{cor:Gevrey_hypoelliptic_tube}
If the map $\phi$ satisfies condition $(\star)$ at all points $\xi_0 \in \mathbb{S}^{m-1}$, with $\theta = 1/2$, then the system $\mathcal{V}$ is hypoelliptic at the origin.
\end{cor}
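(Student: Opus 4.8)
The plan is to derive this corollary from the second assertion of Theorem~\ref{thm:microlocal_hypoelliptic_tube} (the case $\theta = 1/2$), together with microlocal elliptic regularity and the elementary fact that the singular support of a distribution is closed and coincides with the base projection of its wave front set.

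I would start from a distributional solution $u \in \mathcal{D}^\prime(U)$ of $\mathrm{L}_j u = f_j \in \mathcal{C}^\infty(U)$, $j = 1,\dots,n$. Since $\mathrm{L}_1,\dots,\mathrm{L}_n$ are elliptic in the $t$-variables, the classical elliptic regularity theorem (Proposition~I.4.3 of \cite{trevesbook}) provides open balls $V\subset\mathbb{R}^m$ and $W\subset\mathbb{R}^n$ centered at the origin, with $V\times W \Subset U$ and small enough to fulfil all the requirements of section~\ref{sec:preliminaries}, such that $u|_{V\times W} \in \mathcal{C}^\infty(W;\mathcal{D}^\prime(V)) \subset \mathcal{C}^1(W;\mathcal{D}^\prime(V))$. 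In particular $\mathbb{L}u = \sum_j f_j\,\mathrm{d}t_j$ is smooth, so $\mathrm{WF}(\mathbb{L}u) = \emptyset$.

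Next I would invoke microlocal elliptic regularity, $\mathrm{WF}(u) \subset \mathrm{Char}(\mathcal{V}) \cup \mathrm{WF}(\mathbb{L}u) = \mathrm{Char}(\mathcal{V})$. Over the origin this forces every element of $\mathrm{WF}(u)$ lying above $(0,0)$ to have the form $(0,0,\xi,0)$ with $\xi\neq 0$ and ${}^{\mathrm{t}}\mathrm{d}\phi(0)\xi = 0$. For each such $\xi$, set $\xi_0 = \xi/|\xi|\in\mathbb{S}^{m-1}$; by hypothesis $\phi$ satisfies condition $(\star)$ at $\xi_0$ with $\theta = 1/2$, and, since $\mathrm{L}_j u = f_j\in\mathcal{C}^\infty$, we have $(0,0,\xi_0,0)\notin\mathrm{WF}(\mathrm{L}_j u)$ for all $j$. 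The second assertion of Theorem~\ref{thm:microlocal_hypoelliptic_tube} then gives $(0,0,\xi_0,0)\notin\mathrm{WF}(u)$, and by conicity $(0,0,\xi,0)\notin\mathrm{WF}(u)$ as well. Hence the fiber of $\mathrm{WF}(u)$ over $(0,0)$ is empty, i.e.\ $(0,0)\notin\mathrm{sing\,supp}(u)$.

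Finally, since $\mathrm{sing\,supp}(u)$ is a closed subset of $V\times W$ not containing the origin, $u$ is $\mathcal{C}^\infty$ on some open neighborhood of the origin, which is exactly the assertion to be proved. I expect no real obstacle in this step: the entire analytic content sits in Theorem~\ref{thm:microlocal_hypoelliptic_tube}, and the one thing worth checking is simply that condition $(\star)$ is available at every characteristic covector, which holds because it is assumed throughout $\mathbb{S}^{m-1}$. The companion corollary on Gevrey-$s$ hypoellipticity follows by the identical scheme, replacing the second assertion of Theorem~\ref{thm:microlocal_hypoelliptic_tube} by its first assertion, $\mathrm{WF}$ by $\mathrm{WF}_s$, and using the closedness of the Gevrey-$s$ singular support.
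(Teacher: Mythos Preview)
Your proof is correct and follows essentially the same route as the paper: microlocal elliptic regularity confines $\mathrm{WF}(u)|_0$ to the characteristic set, which over the origin consists only of covectors $(0,0,\xi_0,0)$, and then the $\theta=1/2$ case of Theorem~\ref{thm:microlocal_hypoelliptic_tube} together with conicity kills each of these. Your write-up is in fact slightly more careful than the paper's, since you make explicit the preliminary step of passing from $u\in\mathcal{D}^\prime(U)$ to $u\in\mathcal{C}^1(W;\mathcal{D}^\prime(V))$ via partial ellipticity in $t$, which is needed to invoke Theorem~\ref{thm:microlocal_hypoelliptic_tube}.
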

Since the proofs of these corollaries are similar, we shall only prove Corollary \ref{cor:Gevrey_hypoelliptic_tube}.
\begin{proof}[Proof of Corollary \ref{cor:Gevrey_hypoelliptic_tube}]
By the elliptic regularity Theorem we have that $\text{WF}_s(u) \big|_0 \subset \text{Char}(\mathcal{V}) \big|_0$, for $0 \notin\text{singsupp}_s\mathbb{L} u$. The characteristic set at the origin consists only of points of the form $(0,0,\xi_0,0)$, with $\xi_0 \in \mathbb{R}^m \setminus \{0\}$, for we are assuming $\mathrm{d}\phi(0) = 0$. Thus by Theorem \ref{thm:microlocal_hypoelliptic_tube}, $(0,0,\xi_0,0) \notin \text{WF}_s(u)$, since the map $\phi$ satisfies condition $(\star)$ at $\xi_0$, for every $\xi_0 \in \mathbb{S}^{m-1}$\textit{i.e.} $\text{WF}_s(u)\big|_0 = \emptyset$, for $\text{WF}_s(u)$ is conic.
\end{proof}


\subsection{Maire's example}

%
In $\mathbb{R}^4$ consider the following complex vector fields:
\begin{align*}
	&\mathrm{L}_1 = \partial_{t_1} + i\left( 3 \partial_{x_1} - (4 t_1 t_2  + 3)t_1^2 \partial_{x_2} \right),\\
	&\mathrm{L}_2 = \partial_{t_2} -i t_1^4 \partial_{x_2}.
\end{align*}
We already discussed that the system $\{ \mathrm{L}_1, \mathrm{L}_2 \}$ is not Gevrey-$s$ hypoelliptic for every $s \geq 4$, therefore it cannot satisfy property $(\star)$ at all $\xi \in \mathbb{S}^1$. In this section we shall prove this fact explicitly, to be more precise, we shall prove that the Lojasiewicz inequality with parameters with the exponent $1/2 \leq \theta < 1$ fails. The first integrals associated with these complex vector fields are
\begin{align*}
	&Z_1(x,t) = x_1 - i 3 t_1, \\
	&Z_2(x,t) = x_2 + i (t_1 t_2 + 1) t_1^3.
\end{align*}
The characteristic set of the system $\{ \mathrm{L}_1, \mathrm{L}_2 \}$ is given by $\eta_1 = \eta_2 = t_1 = \xi_1 = 0$, where $(\xi, \eta)$ are the dual variables of $(x, t)$. So we only have to check property $(\star)$ at $(0,\pm 1)$. Now note that 
\begin{align*}
	& \| {}^{\mathrm{t}} \mathrm{d} \Phi(t) \xi \|^2 = t_1^8 \xi_2^2 + \left( - 3 \xi_1 + t_1^2 \xi_2 (3+ 4 t_1 t_2)\right)^2,\\
	& | \Phi(t) \cdot \xi |^2 = 9 t_1^2 \xi_1^2 + t_1^6 ( 1+ t_1 t_2)^2 \xi_2^2.
\end{align*}
So let $\xi_1^2 + \xi_2^2 = 1$. If we impose $- 3 \xi_1 + t_1^2 \xi_2 (3+ 4 t_1 t_2) = 0$, we obtain that
\begin{equation*}
	\xi_1^2 = \frac{t_1^4( 3 + 4 t_1 t_2)^2}{9 + t_1^4 (3 + 4 t_1 t_2)^2}. 
\end{equation*}
If we now set $t_2 = 0$, $\xi_1 = \frac{t_1^2}{\sqrt{1 + t_1^4}}$, and $\xi_2 = \sqrt{1 - \frac{t_1^4}{1+ t_1^4}}$, then
\begin{align*}
	& \| {}^{\mathrm{t}} \mathrm{d} \Phi(t) \xi \|^2 = t_1^8 \left(1- \frac{t_1^4}{1 + t_1^4} \right) + 9 t_1^4 \left( -  \frac{1}{\sqrt{1 + t_1^4}} + \sqrt{1 - \frac{t_1^4}{1+ t_1^4}} \right)^2,\\
	& | \Phi(t) \cdot \xi |^2 = 9 \frac{t_1^6}{1+t_1^4}  + t_1^6 \left(1 - \frac{t_1^4}{1+ t_1^4} \right).
\end{align*}
Now as $t_1$ goes to $0$ we have that $ \| {}^{\mathrm{t}} \mathrm{d} \Phi(t) \xi \| \approx t_1^4$ and $ | \Phi(t) \cdot \xi | \approx t_1^3$, therefore the exponent $\theta$ must be at least $4/3$, which is greater than $1$. Thus the system $\{ \mathrm{L}_1, \mathrm{L}_2 \}$ does not satisfy property $(\star)$ at $(0,1)$. Analogously we have that the same is valid at $(0,-1)$.
%


\subsection{A class of examples}\label{sec:examples}

%

Let $k \in \mathbb{N}$, and let $P \in \mathbb{R}[t_1, \dots, t_n]$ be a homogeneous polynomial of degree $k$. 
\begin{lem}\label{lem:gradient_polynomial}
Let $\mathcal{A} \doteq \{t \in \mathbb{R}^n \; : \; |\nabla P(t) | \leq 1 \}$. Suppose that there exists a constant $C>0$ such that
\begin{equation*}
	\sup_{t \in \mathcal{A}} |P(t)| \leq C.
\end{equation*}
Then, the following inequatilly holds true:
\begin{equation}\label{eq:lojasiewicz_homogeneous_polynomial}
	|P(t)|^{1-\frac{1}{k}} \leq C^{1-\frac{1}{k}} |\nabla P(t)|, \qquad \forall t \in \mathbb{R}^n.
\end{equation}
\end{lem}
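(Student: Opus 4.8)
The plan is to exploit the homogeneity of $P$ to reduce the inequality \eqref{eq:lojasiewicz_homogeneous_polynomial} to the single scaling identity relating $|P|$ and $|\nabla P|$ under dilations. Since $P$ is homogeneous of degree $k$, its gradient $\nabla P$ is homogeneous of degree $k-1$: for $\lambda > 0$ one has $P(\lambda t) = \lambda^k P(t)$ and $\nabla P(\lambda t) = \lambda^{k-1}\nabla P(t)$. I would first dispose of the trivial cases: if $t = 0$ both sides vanish, and if $\nabla P(t) = 0$ for some $t \neq 0$, then by homogeneity $\nabla P$ vanishes on the whole ray $\mathbb{R}_+ t$, hence $P$ is constant along that ray; but $P(\lambda t) = \lambda^k P(t) \to 0$ as $\lambda \to 0^+$, forcing $P(t) = 0$, so again both sides are zero. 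Thus it suffices to prove the inequality for $t$ with $\nabla P(t) \neq 0$.

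Fix such a $t$ and choose the dilation factor $\lambda > 0$ that places $\lambda t$ on the boundary of $\mathcal{A}$, i.e. $|\nabla P(\lambda t)| = 1$. By the homogeneity of the gradient this means $\lambda^{k-1}|\nabla P(t)| = 1$, so $\lambda = |\nabla P(t)|^{-1/(k-1)}$. Since $\lambda t \in \mathcal{A}$, the hypothesis gives $|P(\lambda t)| \leq C$, that is $\lambda^k |P(t)| \leq C$, hence
\begin{equation*}
	|P(t)| \leq C \lambda^{-k} = C\, |\nabla P(t)|^{\frac{k}{k-1}}.
\end{equation*}
Raising both sides to the power $1 - \frac{1}{k} = \frac{k-1}{k}$ yields
\begin{equation*}
	|P(t)|^{1-\frac{1}{k}} \leq C^{1-\frac{1}{k}}\, |\nabla P(t)|,
\end{equation*}
which is exactly \eqref{eq:lojasiewicz_homogeneous_polynomial}. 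This completes the argument.

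There is no real obstacle here; the only points requiring a little care are the degenerate cases $t=0$ and $\nabla P(t) = 0$, and the observation that the exponent $k-1$ appearing in the homogeneity of $\nabla P$ is nonzero — which is automatic since $P$ nonconstant forces $k \geq 1$, and if $k = 1$ then $\nabla P$ is a nonzero constant vector, $P$ is linear, and the inequality reads $|P(t)|^0 = 1 \leq C^0 |\nabla P| = |\nabla P|$, so one must note that in that case the hypothesis $\sup_{\mathcal{A}}|P| \leq C$ together with $\mathcal{A}$ being a nonempty slab forces the degenerate reading to be handled separately or simply excluded. I would state the lemma (as the paper does) for $k \in \mathbb{N}$ with $k \geq 2$ in mind, where the scaling argument above goes through verbatim.
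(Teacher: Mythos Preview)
Your proof is correct and follows essentially the same scaling argument as the paper: pick $\lambda = |\nabla P(t)|^{-1/(k-1)}$ so that $\lambda t$ lands on $\{|\nabla P|=1\}\subset\mathcal{A}$, apply the bound $|P(\lambda t)|\le C$, and unwind the homogeneity. If anything, your treatment is slightly more careful than the paper's, which simply asserts that only $t\notin\mathcal{A}$ needs checking without handling the case $\nabla P(t)=0$; your observation that $\nabla P(t)=0$ forces $P(t)=0$ along the ray fills that small gap.
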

Before proving this Lemma, we point out that this is actually an equivalence, and it is related to the problem of finding the explicit Lojasiewicz exponent for polynomials at infinity, see for instance \cite{ploski,haraux}. 
\begin{proof}
We only need to check \eqref{eq:lojasiewicz_homogeneous_polynomial} for $t \notin \mathcal{A}$. So let $t \in \mathbb{R}^n$ be such that $|\nabla P(t)| > 1$, and set $\lambda  = |\nabla P(t)|^{\frac{-1}{k-1}}$. Since $\nabla P$ is homogeneous with degree $k-1$, we have that 
\begin{align*}
	|\nabla P(\lambda t)| &= \lambda^{k-1}|\nabla P(t)| \\
	& = \frac{1}{|\nabla P(t)|} |\nabla P(t)|\\
	& = 1,
\end{align*}
\textit{i.e.} $\lambda t \in \mathcal{A}$. Thus, $|P(\lambda t)| \leq C$, and in view of the homogeneity of $P$, we have that
\begin{align*}
	|P(t)|^{1-\frac{1}{k}} & = \frac{1}{\lambda^{k-1}}|P(\lambda t)|^{1-\frac{1}{k}} \\
	& \leq C^{1-\frac{1}{k}}|\nabla P(t)|.
\end{align*}
\end{proof}
We shall use Lemma \ref{lem:gradient_polynomial} to construct a class of examples satisfying condition $(\star)$. So let $k \in \mathbb{N}$, and let $\phi(t) = (\phi_1(t), \dots, \phi_m(t))$ be such that each $\phi_j \in \mathbb{R}[t_1, \dots, t_n]$ is a homogeneous of degree $k$. For each $\xi \in \mathbb{S}^{m-1}$ we define $P_\xi(t) \doteq \phi(t) \cdot \xi$. Then $P_\xi$ is a homogeneous polynomial of degree $k$. So $\phi$ satisfies condition $(\star)$ with $\theta = (k-1)/k$ at a point $\xi_0 \in \mathbb{S}^{m-1}$ if the origin is not a local minimum of $P_{\xi_0}$ and $P_\xi$ is uniformly bounded in $\{ t \in \mathbb{R}^n \; : \; |\nabla P_\xi(t)| \leq 1\}$, for every $\xi$ in some open neighborhood of $\xi_0$. This boundedness condition is valid if, for instance, the set $\{t \in \mathbb{R}^n \; : \; |\nabla P_\xi(t) | \leq 1\}$ is bounded for every $\xi$ in some open neighborhood of $\xi_0$.
\begin{exa}
Consider in $\mathbb{R}^2$ two polynomials $\phi_1$ and $\phi_2$, homogeneous of degree $k$. Suppose that, for some $0 < \rho < 1$, 
\begin{equation*}
	\rho \|\nabla \phi_2(t)\|_1 \leq \|\nabla \phi_1(t)\|_1, \qquad \forall t \in \mathbb{R}^2.
\end{equation*}
Then, if $0 < \xi_2 < \rho^\prime \xi_1$, for any $0 < \rho^\prime < \rho$,  we have that
\begin{align*}
	|\nabla_t P_\xi |^2 & = |\xi_1 \partial_{t_1}\phi_1 + \xi_2 \partial_{t_1}\phi_2|^2 + |\xi_1 \partial_{t_2}\phi_1 + \xi_2 \partial_{t_2}  \phi_2|^2,
\end{align*}
and
\begin{align*}
	|\xi_1 \partial_{t_1}\phi_1 + \xi_2 \partial_{t_1}\phi_2| & \geq \xi_1 |\partial_{t_1}\phi_1| - \xi_2 |\partial_{t_1} \phi_2| \\
	& \geq \xi_1 \big( |\partial_{t_1}\phi_1| - \rho^\prime  |\partial_{t_1} \phi_2| \big).
\end{align*}
Thus, 
\begin{align*}
	|\nabla_t P_\xi |^2 & \geq \xi_1^2 \Big( \big(|\partial_{t_1}\phi_1| - \rho^\prime  |\partial_{t_1} \phi_2| \big)^2 + \big(|\partial_{t_2}\phi_1| - \rho^\prime  |\partial_{t_2} \phi_2| \big)^2 \Big)\\
	& \geq \frac{1}{2} \xi_1^2 \Big( |\partial_{t_1}\phi_1| + |\partial_{t_2}\phi_1| - \rho^\prime \big( |\partial_{t_1} \phi_2| + |\partial_{t_2} \phi_2|\big) \Big)^2 \\
	& =\frac{1}{2} \xi_1^2 \Big(\|\nabla \phi_1(t)\|_1 - \rho^\prime \|\nabla \phi_2(t)\|_1\Big)^2\\
	& \geq \frac{1}{2}\xi_1^2 (1-\rho^\prime/\rho)\|\nabla \phi_1(t)\|_1^2.
\end{align*}
Therefore, the set $\{ t \in \mathbb{R}^n \; : \; |\nabla P_\xi(t)| \leq 1\}$ is contained in the set
\begin{equation*}
	\left\{ t \in \mathbb{R}^2 \; : \; \|\nabla \phi_1(t)\|_1^2 \leq \frac{2\rho}{\xi_1^2(\rho - \rho^\prime)} \right\},
\end{equation*}
 for every $\xi \in \mathbb{S}^{1}$ such that $0 < \xi_2 < \rho^\prime \xi_1$. So assuming that for every constant $C>0$, the set $\{ t \in \mathbb{R}^2 \; : \; \|\nabla \phi_1(t)\|_1 \leq C\}$ is bounded, then the map $\phi$ satisfies condition $(\star)$ with $\theta = 1-\frac{1}{k}$ at $(1,0)$.
\end{exa}

\section*{Acknowledgements}
We would like to thank professors Paulo D. Cordaro and Gerson Petronilho for the insightful discussions. The author was supported by S\~ao Paulo Research Foundation (FAPESP), Grant 2020/15368-7.

\comment{
\section*{Declarations}

\textbf{Data availability} Data sharing not applicable to this article as no datasets were generated or analyzed during the current study.
\quad \\
\quad \\
\noindent\textbf{Conflict of interest} On behalf of all authors, the corresponding author states that there is no conflict of interest.
}


\begin{thebibliography}{10}

\bibitem{berhanu2012}
Z. ~Adwan and S.~Berhanu.
\newblock On microlocal analyticity and smoothness of solutions of first-order nonlinear PDEs.
\newblock {\em Mathematische Annalen}, 352: 239--258, 2012.

\bibitem{hoepfner2008}
Z. ~Adwan and G.~Hoepfner.
\newblock A generalization of Borel's theorem and microlocal Gevrey regularity in involutive structures.
\newblock {\em Journal of Differential Equations}, 245: 2846--2870, 2008.

\bibitem{hoepfner2010}
Z. ~Adwan and G.~Hoepfner.
\newblock Approximate solutions and micro-regularity in the Denjoy-Carleman classes.
\newblock {\em Journal of Differential Equations}, 249(9): 2269--2286, 2010.

\bibitem{hoepfner2015}
Z. ~Adwan and G.~Hoepfner.
\newblock Denjoy-Carleman classes: boundary values, approximate solutions and applications.
\newblock {\em The Journal of Geometric Analysis}, 25(3): 1720--1743, 2015.

\bibitem{asano}
C. H.~Asano.
\newblock On the $\mathcal{C}^\infty$ wave-front set of solutions of first-order nonlinear PDE's.
\newblock {\em Proceedings of the American Mathematical Society}, 123(10): 3009--3019, 1995.

\bibitem{baouendi1983}
M. S. ~Baouendi, C. H. ~Chang and F. ~Treves.
\newblock Microlocal hypo-analyticity and extension of CR functions.
\newblock {\em Journal of Differential Geometry}, 18(3): 331--391, 1983.

\bibitem{baouendi1985}
M. S. ~Baouendi, L. P.~Rothschild and F. ~Treves.
\newblock CR structures with group action and extendability of CR functions.
\newblock {\em Inventiones mathematicae}, 82: 359--396, 1985.

\bibitem{baouendi1988}
M. S. ~Baouendi and L. P.~Rothschild. 
\newblock Extension of holomorphic functions in generic wedges and their wave front sets.
\newblock {\em Communications in Partial Differential Equations}, 13(11): 1441--1466, 1988.

\bibitem{baouendi1981}
M. S. ~Baouendi  and F. ~Treves.
\newblock A property of the functions and distributions annihilated by a locally integrable system of complex vector fields.
\newblock {\em Annals of Mathematics}, 113(2): 387--421, 1981.

\bibitem{baouendi1982}
M. S. ~Baouendi  and F. ~Treves.
\newblock A microlocal version of {B}ochner's tube theorem.
\newblock {\em Indiana University Mathematics Journal}, 31(6): 885--895, 1982.

\bibitem{baouendi1984}
M. S. ~Baouendi  and F. ~Treves.
\newblock About the holomorphic extension of CR functions on real hypersurfaces in complex space.
\newblock {\em Duke Mathematical Journal}, 51(1): 77--107, 1984.

\bibitem{barostichi2009}
R. F.~Barostichi and G.~Petronilho.
\newblock Gevrey micro-regularity for solutions to first order nonlinear PDE.
\newblock {\em Journal of Differential Equations}, 247(6): 1899--1914, 2009.

\bibitem{barostichi2011}
R. F.~Barostichi and G.~Petronilho.
\newblock Existence of Gevrey approximate solutions for certain systems of linear vector fields applied to involutive systems of first-order nonlinear pdes.
\newblock {\em Journal of Mathematical Analysis and Applications}, 382(1): 248--260, 2011.

\bibitem{berhanu2009}
S. ~Berhanu.
\newblock On microlocal analyticity of solutions of first-order nonlinear PDE.
\newblock {\em Annales de l'Institut Fourier}, 59(4): 1267--1290, 2009.

\bibitem{bierstone1988}
E. ~Bierstone  and P. D. ~Milman.
\newblock Semianalytic and subanalytic sets.
\newblock {\em Publications Math\'{e}matiques de l'IH\'{E}S}, 67:5--42, 1988.

\bibitem{braun2022}
N. ~Braun ~Rodrigues.
\newblock A Fourier-type characterisation for Gevrey vectors on Hypo-analytic structures and propagation of Gevrey singularities.
\newblock {\em Journal of the Institute of Mathematics of Jussieu}, 22(5): 2177--2198, 2022.

\bibitem{braun2023}
N. ~Braun ~Rodrigues, P. D. ~Cordaro  and G. ~Petronilho.
\newblock  Hypoellipticity for certain systems of complex vector fields.
\newblock  {\em Journal of Differential Equations}, 375: 237--249, 2023

\bibitem{braundasilva2021}
N. ~Braun ~Rodrigues and A. V. ~da ~Silva ~Jr.
\newblock pproximate solutions of vector fields and an application to Denjoy-Carleman regularity of solutions of a nonlinear PDE.
\newblock {\em Mathematische Nachrichten}, 294(8): 1452--1471, 2021.

\bibitem{braundasilva2022}
N. ~Braun ~Rodrigues and A. V. ~da ~Silva ~Jr.
\newblock Denjoy-Carleman microregularity on smooth maximally real submanifolds of complex spaces and applications.
\newblock {arXiv:2207.12017}

\bibitem{castellanos2013}
J. E. ~Castellanos, P. D. ~Cordaro  and G. ~Petronilho.
\newblock Gevrey vectors in involutive tube structures and {G}evrey regularity for the solutions of certain classes of semilinear systems.
\newblock {\em Journal d'Analyse Math\'{e}matique}, 119(1): 333--364, 2013.

\bibitem{dynkin}
E. M. ~Dyn'kin.
\newblock Pseudoanalytic extension of smooth functions. The uniform scale.
\newblock {\em American Mathematical Society Translations}, 115(2): 33--58, 1980.

\bibitem{fuerdoes}
S. F\"urd\"os.
\newblock Geometric microlocal analysis in Denjoy-Carleman classes.
\newblock {\em Pacific Journal of Mathematics}, 307(2): 303--351, 2020.

\bibitem{ploski}
J. Gwo\'zdziewicz and A. Ploski.
\newblock On the gradient of a homogeneous polynomial. 
\newblock {\em Universitatis Iagellonicae Acta Mathematica}, 32: 25--28, 1995.

\bibitem{haraux}
A.~Haraux and T. S. Pham.
\newblock On the gradient of quasi-homogeneous polynomials. 
\newblock {\em Universitatis Iagellonicae Acta Mathematica}, 49, 2011.

\bibitem{hanges1992}
N.~Hanges and F.~Treves.
\newblock  On the Analyticity of Solutions of First-Order Nonlinear PDE.
\newblock {\em Transactions of the American Mathematical Society}, 331(2): 627--638, 1992.

\bibitem{hoepfner2018}
G.~Hoepfner and R.~Medrado.
\newblock The FBI transforms and their use in microlocal analysis.
\newblock {\em Journal of Functional Analysis}, 275(5): 1208--1258, 2018.

\bibitem{maire1980}
H. M. ~Maire.
\newblock Hypoelliptic overdetermined systems of partial differential equation.
\newblock {\em Communications in Partial Differential Equationsl}, 5(4): 331--380, 1980.

\bibitem{medrado2016}
R.~Medrado.
\newblock An\'alise microlocal nas classes de Denjoy-Carleman.
\newblock Ph.D. thesis, UFSCar, 2016.

\comment{
\bibitem{denkowski2018}
M. P. ~Denkowski.
\newblock On the complex {L}ojasiewicz inequality with parameter.
\newblock {\em Kodai Mathematical Journal}, 41(2): 359--374, 2018.}


\bibitem{treves1983}
F. ~Treves.
\newblock On the local solvability and the local integrability of systems of vector fields.
\newblock {\em Acta mathematica}, 151(1): 1--48, 1983.

\bibitem{trevesbook}
F. ~Treves.
\newblock Hypo-analytic {Structures}: {L}ocal {T}heory.
\newblock {\em Princeton University Press}, 1992.

\bibitem{Yesuf}
J.~Yesuf and A. Tadesse.
\newblock  FBI Transform Characterization of Ultradifferentiablity on Maximally Real Submanifolds.
\newblock {\em African Diaspora Journal of Mathematics. New Series}, 23(1): 1--23, 2020.




\end{thebibliography}
\end{document}